\tikzstyle{vertex} = [fill,shape=circle,node distance=80pt]
\tikzstyle{edge} = [opacity=0.4,fill opacity=0.0,line cap=round, line join=round, line width=40pt]
\tikzstyle{elabel} =  [fill,shape=circle,node distance=30pt]
\newcommand{\msc}[1]{\begin{center}MSC2010: #1.\end{center}}
\newtheorem{theorem}{Theorem}[section]
\newtheorem{lemma}[theorem]{Lemma}
\newtheorem{conjecture}[theorem]{Conjecture}
\newtheorem{claim}[theorem]{Claim}
\newtheorem{thm}[theorem]{Theorem}
\newtheorem{cor}[theorem]{Corollary}
\newtheorem{problem}[theorem]{Question}
\numberwithin{equation}{section}
\DeclareMathOperator{\rank}{rank}
\DeclareMathOperator{\polylog}{polylog}
\def\COMMENT#1{}
\title{An asymmetric random Rado theorem for single equations: the $0$-statement}
\author{Robert Hancock and
 Andrew Treglown}
\thanks{RH: Institut f\"ur Informatik, Heidelberg University, Im Neuenheimer Feld 205, 69120, Heidelberg, Germany. Previous affiliation: Faculty of Informatics, Masaryk University, Botanick\'a 68A, 602 00 Brno, Czech Republic, {\tt hancock@informatik.uni-heidelberg.de}. This work has received funding from the European Research Council (ERC) under the European Union's Horizon 2020 research and innovation programme (grant agreement No 648509) and from the MUNI Award in Science and Humanities of the Grant Agency of Masaryk University. This publication reflects only its authors' view; the European Research Council Executive Agency is not responsible for any use that may be made of the information it contains.
AT: University of Birmingham, United Kingdom, {\tt a.c.treglown@bham.ac.uk}.}
\begin{document}

\begin{abstract}
A famous result of Rado characterises those integer matrices $A$ which are partition regular, 
i.e. for which any finite colouring of the positive integers gives rise to a monochromatic solution to the equation $Ax=0$. 
Aigner-Horev and Person recently stated a conjecture on the probability threshold for
the binomial random set $[n]_p$ having the asymmetric random Rado property:
given partition regular matrices $A_1, \dots, A_r$ (for a fixed $r \geq 2$), 
however one $r$-colours $[n]_p$, 
there is always a colour $i \in [r]$ such that there is an $i$-coloured solution to $A_i x=0$.
This generalises the symmetric case, which was resolved by
R\"odl and Ruci\'nski, and Friedgut, R\"odl and Schacht.
Aigner-Horev and Person proved the $1$-statement of their asymmetric conjecture. 
In this paper, we resolve the $0$-statement in the case where the $A_i x=0$ correspond to  single linear equations. Additionally we close a gap in the original proof of the 0-statement of the (symmetric) random Rado theorem.
\end{abstract}

\date{\today}

\maketitle
\msc{ 5C55, 5D10, 11B75}

\section{Introduction}

An important branch of arithmetic Ramsey theory concerns partition properties of sets of integers. 
A cornerstone result in the area is Rado's theorem~\cite{rado} which characterises all those systems of homogeneous linear equations $\mathcal L$ for which every finite colouring of $\mathbb N$ yields a monochromatic solution to $\mathcal L$.
Note that this provides a wide-reaching generalisation of other classical results in the area such as Schur's theorem~\cite{schur} (i.e. when  $\mathcal L$ corresponds to $x+y=z$) and van der Waerden's theorem~\cite{vdw} (which ensures a monochromatic arithmetic progression of arbitrary length). 
Perhaps the best known version of Rado's theorem (often presented in undergraduate courses) is the following, which resolves the case of a single equation.
\begin{thm}[Rado's single equation theorem]\label{thm1}
Let $k \geq 2$ and $a_i \in \mathbb Z\setminus \{0\}$. Then the equation $a_1x_1+a_2x_2+\dots+a_kx_k=0$ has a monochromatic solution in $\mathbb N$ for every finite colouring of $\mathbb N$ if and only if some non-empty subset of the coefficients
$\{a_i : i \in [k] \}$ sum to zero.
\end{thm}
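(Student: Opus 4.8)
The statement is exactly the single-equation instance of Rado's general theorem, so the plan is to recall that theorem and read off what its \emph{columns condition} asserts when the coefficient matrix is the $1\times k$ matrix $A=(a_1,\dots,a_k)$. Its columns are then the scalars $a_i$, and the columns condition asks for an ordered partition $[k]=B_1\sqcup\dots\sqcup B_s$ with $\sum_{i\in B_1}a_i=0$ and $\sum_{i\in B_t}a_i$ lying in the $\mathbb{Q}$-span of $\{a_i:i\in B_1\cup\dots\cup B_{t-1}\}$ for each $t\ge 2$. I would observe that, since every $a_i\ne 0$, any nonempty $S\subseteq[k]$ with $\sum_{i\in S}a_i=0$ automatically has $|S|\ge 2$, so $\{a_i:i\in S\}$ already spans $\mathbb{Q}$; hence $B_1:=S$, together with $B_2:=[k]\setminus S$ when this is nonempty, satisfies the columns condition, while conversely the block $B_1$ supplied by the columns condition is a nonempty zero-sum subset. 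This reduces Theorem~\ref{thm1} to Rado's theorem.

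For a self-contained argument I would instead prove the two directions by hand. For the ``only if'' direction, assume no nonempty subset of $\{a_i\}$ sums to zero, fix a prime $q>\sum_{i=1}^{k}|a_i|$, and colour $n\in\mathbb{N}$ by $\bigl(n/q^{v_q(n)}\bigr)\bmod q\in\{1,\dots,q-1\}$, where $v_q$ denotes the $q$-adic valuation. If $x_1,\dots,x_k$ were a monochromatic solution, write $x_i=q^{e_i}u_i$ with $q\nmid u_i$ and $u_i\equiv\gamma\pmod q$ for the common colour $\gamma$; dividing $\sum a_ix_i=0$ by $q^{e}$, where $e:=\min_i e_i$, and reducing modulo $q$ annihilates every term with $e_i>e$ and leaves $\gamma\sum_{i\in S}a_i\equiv 0\pmod q$ with $S:=\{i:e_i=e\}\ne\emptyset$. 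Since $\gcd(\gamma,q)=1$ and $\bigl|\sum_{i\in S}a_i\bigr|<q$, this forces $\sum_{i\in S}a_i=0$, a contradiction; so this finite colouring has no monochromatic solution.

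For the ``if'' direction, take a nonempty $I\subseteq[k]$ with $\sum_{i\in I}a_i=0$. If $\sum_{i=1}^{k}a_i=0$ the constant solution $x_1=\dots=x_k=1$ works, so assume otherwise and set $J:=[k]\setminus I$, $c:=\sum_{i\in J}a_i=\sum_{i=1}^{k}a_i\ne 0$, and $g:=\gcd\{a_i:i\in I\}$. Since $\sum_{i\in I}a_i=0$, adding a constant to every entry of a tuple $(\lambda_i)_{i\in I}$ leaves $\sum_{i\in I}a_i\lambda_i$ unchanged, so $\{\sum_{i\in I}a_i\lambda_i:\lambda_i\in\mathbb{Z}_{\ge 0}\}=g\mathbb{Z}$; I would fix $\lambda_i\ge 0$ $(i\in I)$ with $\sum_{i\in I}a_i\lambda_i=-gc$, note that $p:=\max_{i\in I}\lambda_i\ge 1$ as $gc\ne 0$, and apply Deuber's theorem on $(m,p,c)$-sets: every finite colouring of $\mathbb{N}$ contains a monochromatic $(2,p,g)$-set $\{gx_2\}\cup\{gx_1+\lambda x_2:|\lambda|\le p\}$ for some integers $0<x_1<x_2$. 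Putting $x_i:=gx_1+\lambda_i x_2$ for $i\in I$ and $x_i:=gx_2$ for $i\in J$ yields positive integers of one colour with
\[
\sum_{i=1}^{k}a_ix_i=gx_1\sum_{i\in I}a_i+x_2\sum_{i\in I}a_i\lambda_i+gx_2\sum_{i\in J}a_i=x_2\Bigl(\sum_{i\in I}a_i\lambda_i+gc\Bigr)=0.
\]

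I expect the ``if'' direction to be the genuine obstacle. Trying to realise all the $x_i$ inside a single monochromatic van der Waerden progression $\{y,y+e,y+2e,\dots\}$ fails, since the substitution gives $y\sum_i a_i+e\cdot(\text{integer})=yc+e\cdot(\text{integer})$, and for $c\ne 0$ one cannot cancel the term $yc$ without controlling the divisibility $e\mid yc$, which van der Waerden's theorem does not provide. The way around this is to pass to the two-parameter $(m,p,c)$-sets (which are themselves built by iterating van der Waerden), choosing the leading parameter to be a multiple of $\gcd\{a_i:i\in I\}$ so that $-gc$ genuinely lies in $\{\sum_{i\in I}a_i\lambda_i\}$; and one uses $\sum_{i\in I}a_i=0$ a second time to make the $\lambda_i$ simultaneously nonnegative, which is what keeps the $x_i$ positive.
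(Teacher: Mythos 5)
Theorem~\ref{thm1} is quoted in the paper as a classical result of Rado~\cite{rado} and no proof is given there, so there is nothing internal to compare against; judged on its own, your argument is correct. Your first paragraph correctly observes that for a $1\times k$ matrix with nonzero entries the columns condition degenerates to the existence of a nonempty zero-sum subset (any nonempty set of nonzero scalars spans $\mathbb{Q}$), so the statement is indeed the single-equation instance of Rado's general theorem. Your self-contained argument is also sound: the ``only if'' direction is the standard $q$-adic colouring, and the arithmetic there is right --- primality of $q$ gives invertibility of the colour $\gamma$, and $q>\sum_i|a_i|$ upgrades $\sum_{i\in S}a_i\equiv 0 \pmod q$ to an equality. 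In the ``if'' direction the key points all check out: since $\sum_{i\in I}a_i=0$, shifting all $\lambda_i$ by a common constant preserves $\sum_{i\in I}a_i\lambda_i$, so the nonnegativity constraint costs nothing and $-gc\in g\mathbb{Z}$ is attainable; the elements $gx_1+\lambda_i x_2$ and $gx_2$ all lie in the monochromatic $(2,p,g)$-set (so are positive and of one colour); and the displayed cancellation uses $\sum_{i\in I}a_i=0$ and $\sum_{i\in I}a_i\lambda_i=-gc$ exactly as needed. The only stylistic remark is that invoking Deuber's theorem in full is a heavier hammer than necessary --- for a single equation one only ever needs the $m=2$ case, which is a short iterated van der Waerden/colour-focusing argument, and this is the route classical expositions take --- but this is not circular (Deuber's theorem is proved independently of Rado) and does not affect correctness; likewise the condition $0<x_1<x_2$ you attach to the generators is not needed anywhere in your computation.
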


In parallel to progress on Ramsey properties of \emph{random graphs} (see e.g.~\cite{mns, random3}), 
there has been interest in proving random analogues of such results from arithmetic Ramsey theory. 
(This is part of a wider interest in extending classical combinatorial results to the random setting, see e.g.~\cite{conlongowers, schacht} and the survey~\cite{conlonsurvey}.)\COMMENT{AT 27/04: added as I think it is good to put in wider context}
In particular, results of R\"odl and Ruci\'nski~\cite{random4} and Friedgut, R\"odl and Schacht~\cite{frs} together provide a random version of Rado's theorem.

\subsection{A random version of Rado's theorem}
Before we state these results rigorously 
we will introduce some notation and definitions. 
Suppose that $A_1,\dots, A_r$ are integer matrices, and let $S$ be a set of integers. 
If a vector $x=(x_1,\dots,x_k) \in S^k$ satisfies $A_ix=0$  and the $x_i$ are distinct we call $x$ a \emph{$k$-distinct solution} to $A_ix=0$ in $S$. 
We say that $S$ is \emph{$(A_1,\dots, A_r)$-Rado} if given any $r$-colouring of $S$, 
there is some $i \in [r]$ such that there is a $k$-distinct solution $x=(x_1,\dots,x_k)$ to $A_ix=0$ in $S$ so 
that $x_1,\dots ,x_k$ are each coloured with the $i$th colour.
If $A:=A_1=\dots =A_r$ we write \emph{$(A,r)$-Rado} for $(A_1,\dots, A_r)$-Rado.
Similarly, given linear equations $B_1,\dots, B_r$, we define a \emph{$k$-distinct solution of $B_i$} and
\emph{$(B_1,\dots, B_r)$-Rado} analogously.
Note that in the study of random versions of Rado's theorem authors have (implicitly) considered 
the $(A_1,\dots, A_r)$-Rado property, rather than seeking a monochromatic solution that is not necessarily
$k$-distinct (as in the original theorem of Rado). Perhaps a  partial explanation for this can be seen if one considers e.g. the equation
$x+y=2z$; in this case any (monochromatic) set has a solution to this equation (since $w+w=2w$ for any
$w \in \mathbb N$). A more general discussion which further highlights 
 why the literature has  focused on monochromatic $k$-distinct solutions in the random setting is given in Section~\ref{conc}.

A matrix $A$ is \emph{partition regular} if for any finite colouring of $\mathbb{N}$, there is always a monochromatic solution to $Ax=0$. 
As mentioned above, Rado's theorem characterises all those integer matrices $A$ that are partition regular.
A matrix $A$ is \emph{irredundant} if there exists a $k$-distinct solution to $Ax=0$ in $\mathbb{N}$. Otherwise $A$ is \emph{redundant}. 
The study of random versions of Rado's theorem has focused on irredundant partition regular matrices. This is natural since
for every redundant $\ell \times k$ matrix $A$ for which $Ax=0$ has solutions in $\mathbb{N}$,
there exists an irredundant $\ell '\times k'$ matrix $A'$
for some $\ell '< \ell$ and $k' < k$ with the
same family of solutions (viewed as sets). See~\cite[Section 1]{random4} for a full explanation. 
Similarly, we define linear equations to be \emph{irredundant/redundant} analogously.

Index the columns of $A$ by $[k]$. For a partition $W \dot{\cup} \overline{W} = [k]$ of the columns of $A$, we denote by $A_{\overline{W}}$ the matrix obtained from $A$ by restricting to the columns indexed by $\overline{W}$. Let $\rank(A_{\overline{W}})$ be the rank of $A_{\overline{W}}$, where $\rank(A_{\overline{W}})=0$ for $\overline{W}=\emptyset$. We set 
\begin{align}\label{m(A)def}
m(A):=\max_{\substack{W \dot{\cup} \overline{W} = [k] \\ |W|\geq 2}} \frac{|W|-1}{|W|-1+\rank(A_{\overline{W}})-\rank(A)}.
\end{align}
The definition of $m(A)$ was introduced in~\cite{random4}, and as noted there the denominator of $m(A)$ is strictly positive provided that $A$ is irredundant and partition regular.

Suppose now that $A$ is a linear equation with $k$ variables. (We also describe $A$ as having \emph{length} $k$.)
Thus $A$ is of the form $A'x=c$ where $c\in \mathbb Z$
and $A'$ is a $1\times k$ integer matrix (where all terms are non-zero). 
We call $A'$ the \emph{underlying matrix} of $A$.
Note that if $A'$ is irredundant, then so is $A$; this fact is contained within Lemma 4.1 in~\cite{krss}. 
(That is, $A'x=c$ has a $k$-distinct solution in $\mathbb N$ as long as $A'x=0$ does.)
We define $m(A):=m(A')$. 
In this case (provided $k \geq 3$), the value of $m(A)$ is obtained by considering $W=[k]$ and so
\begin{align}\label{1eq}
    m(A)=\frac{k-1}{k-2}.
\end{align}
\smallskip

Recall that $[n]_p$ denotes a set where each element $a \in [n]:=\{1,\dots,n\}$ is included with probability $p$ 
independently of all other elements. R\"odl and Ruci\'nski~\cite{random4} showed that for 
irredundant partition regular matrices $A$, $m(A)$ is an important parameter for determining 
whether $[n]_p$ is $(A,r)$-Rado or not. 

\begin{thm}[R\"odl and Ruci\'nski~\cite{random4}]\label{radores0} 
For all irredundant partition regular full rank matrices $A$ and all positive integers $r\geq 2$, 
there exists a constant $c>0$ such that 
$$
\lim_{n \rightarrow \infty} \mathbb{P}\left[ [n]_p \text{ is } (A,r)\text{-Rado} \right]=0 \quad\text{ if } p < cn^{-1/m(A)}.
$$
\end{thm}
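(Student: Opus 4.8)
The plan is to show that when $p < cn^{-1/m(A)}$, with high probability $[n]_p$ admits an $r$-colouring with no monochromatic $k$-distinct solution to $Ax=0$. The strategy is the standard second-moment/deletion approach to 0-statements: first observe that $m(A)$ is precisely the parameter making $n^{-1/m(A)}$ the threshold at which the expected number of $k$-distinct solutions in $[n]_p$ becomes comparable to $|[n]_p| \approx pn$. Concretely, for a partition $W \,\dot\cup\, \overline W = [k]$ achieving the maximum in \eqref{m(A)def}, a solution to $Ax=0$ is determined (up to bounded multiplicity) by choosing the $|W|$ coordinates freely subject to $|W|-1+\rank(A_{\overline W})-\rank(A)$ linear constraints, so the expected number of such ``$W$-solutions'' is of order $n^{|W|-1+\rank(A_{\overline W})-\rank(A)} p^{|W|}$, which is $\Theta(pn)$ exactly when $p = \Theta(n^{-1/m(A)})$, and is $o(pn)$ when $p < cn^{-1/m(A)}$ for $c$ small.

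First I would set up the following coupling: expose $[n]_p$, then for each $k$-distinct solution $x$ present in $[n]_p$, delete one of its coordinates (say the one in the ``free block'' $W$ corresponding to the optimal partition) from the set, obtaining a subset $S \subseteq [n]_p$. Since the expected number of solutions is $o(pn) = o(\mathbb{E}|[n]_p|)$, Markov's inequality gives that with high probability we delete at most $|[n]_p|/(2r)$ elements, so $|S| \geq (1 - 1/(2r))|[n]_p|$ and, crucially, $S$ contains no $k$-distinct solution to $Ax=0$ at all (every solution lost a coordinate). Wait — deletion kills \emph{all} solutions, which is stronger than we need; the real subtlety, and the reason the argument is not quite this trivial, is that deleting coordinates from solutions can cascade, and one must be careful that the deleted set is genuinely small. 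More robustly, one bounds the number of solutions with first-moment plus a concentration/union-bound argument over the ``cores'' of solutions, showing the number of elements of $[n]_p$ that lie in \emph{some} solution is $o(pn)$ with high probability; this uses that for every partition $W\,\dot\cup\,\overline W$ with $|W|\ge 2$ the exponent $|W|-1+\rank(A_{\overline W})-\rank(A) \ge |W|/m(A)$, so all relevant expectations are $o(pn)$ simultaneously. Once such a sparse ``bad set'' $B$ is removed, colour $B$ arbitrarily and $[n]_p \setminus B$ with a single colour (or any fixed colour); since $[n]_p\setminus B$ has no $k$-distinct solution, and we may afford to throw $B$ entirely into one reserved colour class provided $|B|$ is small — but that colour class could still contain a solution using elements of $B$.

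The correct finish, therefore, is the two-round exposure argument of Rödl–Ruciński: expose $[n]_p = R_1 \cup R_2$ where $R_1 \sim [n]_{p_1}$, $R_2 \sim [n]_{p_2}$ with $p_1 + p_2 \approx p$ and both $\Theta(n^{-1/m(A)})$. With high probability $R_1$ contains $o(|R_1|)$ elements lying in a solution \emph{within $R_1 \cup R_2$}; remove these to get $R_1'$. Colour $R_1'$ and $R_2$, together with the bad sets, using a greedy/random colouring that exploits the sparsity: essentially, repeatedly pull out a small ``dangerous'' portion and recolour it, using that the ``link'' of any single element — the number of solutions through a fixed point — is $o(1)$ in expectation after the first round, so a random $r$-colouring of the rest destroys monochromatic solutions with positive probability for each fixed colour, and the few bad elements can be absorbed. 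The main obstacle I expect is precisely this last step: controlling the dependencies between different solutions (they can share many coordinates) to make a union bound or a Lovász-Local-Lemma-type argument go through uniformly over all the partitions $W$, and handling the fact that $A$ has full rank $\ell$ rather than $\ell = 1$, so the constraint-counting via $\rank(A_{\overline W})$ must be done carefully rather than by the simple formula \eqref{1eq}. (Indeed, the paper's own contribution includes closing a gap in exactly this part of the original argument for the single-equation case, which suggests the naive plan above is subtly incomplete.)
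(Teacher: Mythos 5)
Your plan rests on a first-moment claim that is false at this threshold, and the step it is meant to feed into is exactly the part of the proof that needs real work. At $p=cn^{-1/m(A)}$ the expected number of $k$-distinct solutions in $[n]_p$ is \emph{not} $o(pn)$: for a strictly balanced $\ell\times k$ matrix one computes $\mathbb{E}[\#\text{solutions}]=\Theta\bigl(n^{k-\ell}p^{k}\bigr)=\Theta\bigl(c^{k-1}\,pn\bigr)$, i.e.\ a constant (in $n$) fraction of $\mathbb{E}|[n]_p|$; and if $A$ is not strictly balanced (so the maximum in the definition of $m(A)$ is attained at a proper $W$), the expected number of full solutions at $p=cn^{-1/m(A)}$ is in fact $\omega(pn)$, so ``delete one coordinate per solution'' can delete more elements than $[n]_p$ contains -- one must first pass to the strictly balanced core $C(A)$, which your sketch only gestures at. Consequently the set $D$ of elements lying in solutions is (after reducing to the core) a constant proportion of $[n]_p$, and the whole difficulty of the $0$-statement is to colour \emph{those} elements: declaring $[n]_p\setminus D$ one colour says nothing about solutions inside $D$, and your proposed finish (random colouring plus absorption, justified by ``the link of an element is $o(1)$'') does not close the gap -- the expected number of solutions through a fixed present element is $\Theta(c^{k-1})$, a small constant rather than $o(1)$, and the expected number of monochromatic solutions under a uniformly random $r$-colouring is still a positive power of $n$, so neither a union bound nor an unspecified local-lemma/absorption step yields the required ``with high probability'' conclusion.

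The attribution is also off: the R\"odl--Ruci\'nski $0$-statement is not proved by two-round exposure and deletion. Their argument -- and the corrected, generalised version in this paper -- first reduces $r$ colours to $2$ and reduces $A$ to its core, then considers the hypergraph $G$ on $[n]_p$ whose edges are the $k$-distinct core solutions, takes a Rado-minimal subhypergraph $H$, and proves a deterministic lemma: if $H\neq\emptyset$ it must contain one of a short list of sparse configurations (spoiled simple paths, fairly simple cycles with handles, bad triples, long simple paths, faulty simple paths, bad tight paths). A first-moment/Markov computation, using strict balancedness of the core via~(\ref{eq:strictlyb}), shows w.h.p.\ none of these configurations appear in $G$, whence $H=\emptyset$ and $[n]_p$ is w.h.p.\ not $(A,2)$-Rado, hence not $(A,r)$-Rado. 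This structural analysis of a minimal non-$2$-colourable subhypergraph is precisely the machinery your proposal is missing, so as it stands the proposal does not constitute a proof.
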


Roughly speaking, Theorem~\ref{radores0} implies that almost all subsets of $[n]$ with 
significantly fewer than $n^{1-1/m(A)}$ elements are not $(A,r)$-Rado
for any irredundant partition regular matrix $A$.
The following theorem  of Friedgut, R\"odl and Schacht~\cite{frs} complements this result, implying that 
almost all subsets of $[n]$ with significantly more than $n^{1-1/m(A)}$ elements are $(A,r)$-Rado
for any irredundant partition regular matrix $A$.

\begin{thm}[Friedgut, R\"odl and Schacht~\cite{frs}]\label{r3} 
For all irredundant partition regular full rank matrices $A$ and all positive integers $r$, there exists a constant $C>0$ such that $$
\lim_{n \rightarrow \infty} \mathbb{P}\left[ [n]_p \text{ is } (A,r)\text{-Rado}\right]=1 \quad\text{ if } p > Cn^{-1/m(A)}.
$$
\end{thm}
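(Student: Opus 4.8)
\smallskip
\noindent\textbf{Proof proposal for Theorem~\ref{r3}.} The plan is to reduce the $1$-statement to a \emph{deterministic} supersaturation version of Rado's theorem, and then transfer this to the sparse random set $[n]_p$. First I would record the following Varnavides-type strengthening of Rado's theorem, in the spirit of Frankl, Graham and R\"odl: for every $r\geq 2$ and every $\eps>0$ there is $\delta>0$ such that, for all sufficiently large $N$, every $r$-colouring of $[N]$ yields at least $\delta N^{\,k-\rank(A)}$ monochromatic $k$-distinct solutions to $Ax=0$. This follows from Rado's theorem, which supplies some $N_0=N_0(A,r)$ for which every $r$-colouring of $[N_0]$ has a monochromatic solution, by averaging over a large family of solution-preserving copies of $[N_0]$ inside $[N]$, thereby promoting a single monochromatic solution to a positive proportion of all solutions. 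For a single equation this reads: every $r$-colouring of $[N]$ has $\Omega(N^{k-1})$ monochromatic solutions, a constant fraction of all solutions.

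For the transference step I would use the hypergraph container method. Let $\mathcal{H}=\mathcal{H}_n$ be the $k$-uniform hypergraph on $[n]$ whose edges are the $k$-element images of the $k$-distinct solutions to $Ax=0$ in $[n]$, so that $e(\mathcal{H})=\Theta(n^{\,k-\rank(A)})$; the co-degree (spreadness) conditions required to run the container theorem on $\mathcal{H}$ hold precisely because $m(A)$ is the parameter governing how much solutions can cluster on small vertex sets. The container theorem then produces a family $\mathcal{C}$ of subsets of $[n]$, of size at most $\exp\!\big(O(n^{\,1-1/m(A)}\polylog n)\big)$, such that every solution-free subset of $[n]$ lies in some $C\in\mathcal{C}$, and every $C\in\mathcal{C}$ spans at most $\eps\, e(\mathcal{H})$ edges. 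A ``bad'' $r$-colouring of $[n]_p$ (one with no monochromatic $k$-distinct solution) partitions $[n]_p$ into solution-free classes, hence $[n]_p\subseteq C_1\cup\dots\cup C_r$ for some $C_1,\dots,C_r\in\mathcal{C}$; colouring $C_1\cup\dots\cup C_r$ greedily by ``least index of a container containing the vertex'' and comparing the resulting monochromatic solution count with the supersaturation bound above forces $|[n]\setminus(C_1\cup\dots\cup C_r)|\geq n/2$, provided $\eps$ was chosen small enough in terms of $\delta$ and $r$. Thus a bad colouring makes $[n]_p$ avoid a set of size $n/2$, and a union bound over the (subexponentially many) $r$-tuples from $\mathcal{C}$, balanced against the probability $(1-p)^{n/2}$ of avoiding a fixed such set, drives the probability of a bad colouring to $0$.

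The delicate point, and the main obstacle, is obtaining the transference step with a \emph{genuine constant} $C$: the crude container count above carries a superfluous $\log n$ factor, so the naive union bound only works for $C$ growing slowly with $n$. Recovering the sharp (constant) threshold requires a more careful argument --- either iterating the container construction, as in the graph random Ramsey theorem, or feeding a balanced-supersaturation estimate for solutions of $Ax=0$ into the container machinery; this is in effect the content of the Friedgut--R\"odl--Schacht transference theorem used in~\cite{frs}. The other technical core is verifying the precise co-degree hypotheses of the container theorem for $\mathcal{H}$ and confirming that the extremal density that emerges is exactly $m(A)$, so that the threshold produced by the argument matches that of Theorem~\ref{radores0}.
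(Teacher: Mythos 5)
A preliminary remark: Theorem~\ref{r3} is quoted in this paper as a black box from~\cite{frs}; the paper proves only $0$-statements, so there is no internal proof to compare yours against. Your outline follows the container route (the one taken in~\cite{hst} and~\cite{sp}, and in~\cite{AP} for the asymmetric $1$-statement) rather than the original transference argument of~\cite{frs}; the skeleton --- Frankl--Graham--R\"odl supersaturation~\cite{fgr} plus a container theorem for solution-free subsets of $[n]$ --- is the right one, and the supersaturation step you describe is fine (it also needs to be made robust to deleting a small linear fraction of $[n]$, which is immediate since each element lies in $O(n^{k-\rank(A)-1})$ solutions).

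The genuine gap is the step you flag and then defer. As you note, a union bound over $r$-tuples of \emph{containers} costs $\exp\bigl(O(n^{1-1/m(A)}\log n)\bigr)$ and cannot be beaten by $(1-p)^{\Omega(n)}$ at $p=Cn^{-1/m(A)}$ with constant $C$; but the remedy is neither ``iterating the container construction'' nor an appeal to ``the content of~\cite{frs}'' --- it is different bookkeeping with the same container theorem. The container lemma assigns to each solution-free set $S\subseteq[n]$ a fingerprint $T\subseteq S$ with $|T|\leq C'n^{1-1/m(A)}$ that determines its container. If $[n]_p$ has a bad colouring, there are fingerprints $T_1,\dots,T_r\subseteq[n]_p$ whose containers $D_1,\dots,D_r$ cover $[n]_p$, and by robust supersaturation the set $[n]\setminus(D_1\cup\dots\cup D_r)$ has size at least $\gamma n$ and is avoided by $[n]_p$. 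Union bounding over fingerprint tuples \emph{weighted by the probability $p^{|T_1\cup\dots\cup T_r|}$ that the fingerprints lie in $[n]_p$} gives a total contribution of at most
\[
\Bigl(\sum_{t\leq C'n^{1-1/m(A)}}\tbinom{n}{t}p^{t}\Bigr)^{r}(1-p)^{\gamma n}
\;\leq\;
\exp\Bigl(rC'n^{1-1/m(A)}\bigl(1+\log\tfrac{C}{C'}\bigr)-\gamma C n^{1-1/m(A)}\Bigr),
\]
using $\binom{n}{t}p^{t}\leq(enp/t)^{t}$; choosing $C$ large compared with $C'$, $r$, $\gamma$ makes this $o(1)$, with no logarithmic loss and no iteration. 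This is exactly how~\cite{hst} (and the graph analogue of Nenadov--Steger) conclude. With that replacement, and with the co-degree/balanced-supersaturation hypotheses of the container theorem for the solution hypergraph verified as in~\cite{hst} (this is where $m(A)$ enters), your sketch becomes a complete proof of Theorem~\ref{r3}.
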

So together Theorems~\ref{radores0} and~\ref{r3} show that the threshold 
for the property of being $(A,r)$-Rado is $p=n^{-1/m(A)}$.
Note that earlier Theorem~\ref{r3} was confirmed by Graham, R\"odl and Ruci\'nski~\cite{grr} in the case where $r=2$ and $Ax=0$ corresponds to $x+y=z$, and then by R\"odl and Ruci\'nski~\cite{random4} in the case when $A$ is so-called density regular.
Since its proof, generalised versions of Theorem~\ref{r3} have been obtained via applications of the container method~\cite{hst, sp}.
A \emph{sharp threshold} version of van der Waerden's theorem for random subsets
of $\mathbb Z_n$ has also been obtained~\cite{sharp}.\COMMENT{AT 27/4: added}

Whilst preparing this paper, we discovered a bug in the original proof of Theorem~\ref{radores0} (this is explained further in Section~\ref{sec:mainproof2}).
Thus, an aim of this paper is to give a proof of Theorem~\ref{radores0}. In fact,
 we prove a more general result; see Theorem~\ref{mainthm2}.

\subsection{An asymmetric version of the random Rado theorem}
As noted  e.g. in~\cite{AP}, one can deduce an  {\emph{asymmetric}} version of Rado's theorem from the original (symmetric) result~\cite{rado}.
In particular, if $A_1,\dots,A_r$ are partition regular matrices 
then $\mathbb N$ is $(A_1,\dots,A_r)$-Rado.
(Note though that even a weak version of the converse statement is not true. For example,
there are $2$-colourings of $\mathbb N$ without a monochromatic solution to $x=2y$, and also
such $2$-colourings of $\mathbb N$ for $x=4y$. On the other hand, however one $2$-colours $\{1,2,4,8,16\}$, one obtains a red solution to $x=2y$ or blue solution to $x=4y$.)

It is also natural to seek an asymmetric version of the random Rado theorem. This question was first considered
by the authors and Staden~\cite{hst}  who proved the following: given any $r\geq 2$ and any irredundant full
rank partition regular matrices $A_1,\dots, A_r$ with $m(A_1)\geq \dots \geq m(A_r)$, there is a constant $C>0$
so that $\lim_{n \rightarrow \infty} \mathbb{P}\left[ [n]_p \text{ is } (A_1,\dots,A_r)\text{-Rado}\right]=1$ { if } $p > Cn^{-1/m(A_1)}$.

In general the bound on $p$ in this result is not believed to be best possible (unless $m(A_1)=m(A_2)$).
Indeed, recently Aigner-Horev and Person~\cite{AP} have given a conjecture on the threshold for
the asymmetric Rado property. To state this conjecture, we need one more definition.
Let $A$ and $B$ be two integer matrices,  where $A$ is an $\ell _A \times k_A$ matrix and 
$B$ is an $\ell _B \times k_B$ matrix.
Then define
\begin{align}\label{eq:mAB}
m(A,B):= \max_{\stackrel{W \dot\cup \overline{W} = [k_A]}{|W| \geq 2}} \frac{|W|}{|W|-1+\rank(A_{\overline{W}})-\rank(A)+1/m(B)}.
\end{align}
As observed in~\cite[Observation 4.13]{AP}, if 
$A$ and $B$ are partition regular and irredundant and
$m(A)\geq m(B)$, then $m(A,B)\geq m(B)$ and 
$m(A,A)=m(A)$.
If $A$ and $B$ are linear equations each of length at least three then we define
$m(A,B)$ in an analogous way (i.e. $m(A,B):= m(A',B')$ where $A',B'$ are the underlying matrices of
$A$ and $B$ respectively). See~\cite[Page 4]{AP}
for an intuitive explanation of the parameter $m(A,B)$.

\begin{conjecture}[Aigner-Horev and Person~\cite{AP}]\label{conj}
Let $A_1,\dots,A_r$ be $r$ irredundant partition regular matrices of full rank where
$m(A_1)\geq m(A_2)\geq \dots \geq m(A_r)$. Then there exists $0<c<C$ such that the following holds
\begin{align*}
\lim_{n \to \infty} \mathbb{P}[[n]_p \text{ is $(A_1,\dots, A_r)$-Rado}]=
\begin{cases}
1 \text{ if } p>Cn^{-1/m(A_1,A_2)}; \\
0 \text{ if } p<cn^{-1/m(A_1,A_2)}.
\end{cases}
\end{align*}
\end{conjecture}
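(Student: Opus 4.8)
The plan is to prove the two directions of Conjecture~\ref{conj} separately, the $1$-statement being essentially known and the $0$-statement being the hard part.

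\textbf{The $1$-statement} ($p>Cn^{-1/m(A_1,A_2)}$) is due to Aigner-Horev and Person~\cite{AP} for every $r\geq 2$, so here one may simply cite it; for completeness, the mechanism is the hypergraph container method. For each $i$ one forms a hypergraph on vertex set $[n]$ whose edges are the (essentially minimal) solutions of $A_i$, and extracts from it a family of at most $\exp\bigl(o(n^{1-1/m(A_1,A_2)})\bigr)$ ``containers'', each of which is sparse in solutions of the relevant $A_i$. In any colouring of $[n]_p$ witnessing that $[n]_p$ is not $(A_1,\dots,A_r)$-Rado, the colour-$1$ class is solution-free for $A_1$, hence lies inside one $A_1$-sparse container $C$; iterating this for the remaining colours inside $[n]\setminus C$ and combining with a supersaturation (counting) lemma forces some colour class to contain a solution to its matrix, a contradiction, provided the union bound over all containers survives — which is exactly where the exponent $1/m(A_1,A_2)$ enters.

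\textbf{The $0$-statement} ($p<cn^{-1/m(A_1,A_2)}$) is where the difficulty lies. First, observe that it suffices to handle $r=2$: a $2$-colouring of $[n]_p$ with no red solution to $A_1$ and no blue solution to $A_2$ already certifies that $[n]_p$ is not $(A_1,\dots,A_r)$-Rado, by colouring the $A_1$-free class with colour $1$, the $A_2$-free class with colour $2$, and leaving colours $3,\dots,r$ unused. So the task reduces to the $r=2$ case, and this is precisely the part that the present paper resolves only when $A_1,A_2$ are single linear equations. For $r=2$ the natural route is a first-moment argument: say $S\subseteq[n]$ is an \emph{asymmetric core} if every red/blue colouring of $S$ yields a red solution to $A_1$ or a blue solution to $A_2$, and that $S$ is \emph{minimal} if no proper subset is. Then $[n]_p$ is $(A_1,A_2)$-Rado exactly when it contains a minimal asymmetric core, so it is enough to show that with high probability $[n]_p$ contains none. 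I would then argue (via a compactness/Rado-type argument as in~\cite{random4}) that every minimal asymmetric core is obtained by amalgamating a bounded number of solutions of $A_1$ and of $A_2$ along shared variables, so that there are only finitely many isomorphism types; it then remains to prove that for each such type $T$, and for $p<cn^{-1/m(A_1,A_2)}$, the expected number of copies of $T$ in $[n]_p$ tends to $0$, after which a union bound over types finishes the proof.

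\textbf{The main obstacle} is exactly this last expectation estimate for arbitrary irredundant partition regular full rank $A_1,A_2$. For single equations the minimal asymmetric cores have a transparent shape and the extremal partition $W\dot{\cup}\overline{W}$ in~\eqref{eq:mAB} can be identified explicitly, so the bound reduces to a direct calculation — this is the content of the present paper's $0$-statement. For general matrices one must instead control the rank terms $\rank(A_{\overline{W}})$ simultaneously across all ways of gluing partial solutions of $A_1$ and $A_2$, and it is not clear a priori that the ``densest'' minimal asymmetric core is governed by a single partition; a local-sparsity (balancedness) analysis in the spirit of the strictly balanced configurations used in the symmetric case appears to be needed. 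A further delicate point, inherited from the analogous Kohayakawa--Kreuter problem for graphs, is the boundary regime $m(A_1)=m(A_2)$, where the strict density inequality can degenerate; there one expects $m(A_1,A_2)=m(A_1)$, so the conclusion would instead follow from the symmetric $0$-statement (Theorem~\ref{radores0}) applied to $A_1$ together with the reduction above. (Since Theorem~\ref{radores0} itself had a gap, any self-contained treatment must also re-establish the symmetric case en route, as is done here.)
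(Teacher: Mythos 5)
The statement you are trying to prove is a conjecture that the paper itself does not (and does not claim to) establish in full generality: the paper proves the $0$-statement only for single linear equations (Theorem~\ref{mainthm}) and for systems whose cores are strictly balanced of equal dimension (Theorems~\ref{mainthm2} and~\ref{mainthm2general}), and your proposal likewise leaves the decisive step unproved, so it is not a proof. Beyond that, one of the intermediate claims you do make is wrong: it is not true that every minimal ``asymmetric core'' is an amalgamation of a \emph{bounded} number of solutions, so there are not finitely many isomorphism types over which to take a union bound. This is visible already in the single-equation case treated in the paper: the Rado-minimal hypergraph $H$ need not contain any bounded-size obstruction, and the deterministic lemma (Lemma~\ref{detclaim}, and Lemma~\ref{detclaim2} in the symmetric/balanced setting) must include structures of size up to $\log n$ ($A$-paths, $AB$-paths and $AB$-cycle-paths of logarithmic length). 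Correspondingly, the probabilistic lemma does not follow from ``expected number of copies of each fixed type is $o(1)$''; it needs the factor $c^{\Omega(\log n)}$ coming from taking the constant $c$ in $p\leq cn^{-1/m(A_1,A_2)}$ small, to beat polynomial factors for these growing structures. The compactness claim you attribute to~\cite{random4} is not what that paper (or this one) does: they work with a Rado-minimal subhypergraph and show it must contain one of a short list of forbidden configurations, some of unbounded size, rather than classifying minimal Rado sets up to finitely many types.

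Two further points. First, your side remark about the boundary regime $m(A_1)=m(A_2)$ does not give a valid reduction: the symmetric $0$-statement (Theorem~\ref{radores0}) for $A_1$ produces a colouring in which \emph{no} colour class contains an $A_1$-solution, but this says nothing about $A_2$-solutions in the second colour class, which is what the asymmetric property requires; the paper instead handles the equal-parameter case by its own argument (Theorem~\ref{mainthm2general}), which in particular repairs a gap in the original proof of Theorem~\ref{radores0}. Second, the parts of your proposal that are sound — citing~\cite{AP} for the $1$-statement and reducing the $0$-statement to $r=2$ by using only two colours — are indeed how the paper frames the problem (cf.\ the corollary after Theorem~\ref{mainthm}), but the heart of the matter, namely a deterministic structural lemma for Rado-minimal hypergraphs built from two different solution hypergraphs together with matching first-moment estimates controlling the rank terms $\rank\bigl((A_i)_{\overline{W}}\bigr)$ for general matrices, is exactly what is missing both in your sketch and, as the paper's concluding remarks explain, in the current state of the art.
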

In particular, if true, Conjecture~\ref{conj} provides a wide generalisation of the (symmetric)
random Rado theorem (Theorems~\ref{radores0} and~\ref{r3}).
Note that the reader might recognise  parallels between this conjecture and the \emph{Kohayakawa--Kreuter conjecture} for asymmetric Ramsey properties of random graphs; see~\cite{AP} for more details.
In~\cite{AP}, Aigner-Horev and Person proved the $1$-statement ($p>Cn^{-1/m(A_1,A_2)}$) of Conjecture~\ref{conj} via the container method.
Thus, only the $0$-statement ($p<cn^{-1/m(A_1,A_2)}$) now remains open.

In this paper, we make significant progress on this problem, including resolving the conjecture in the case that
each of the $A_i$s corresponds to  linear equations (rather than  systems of linear equations).
Note that  such irredundant partition regular linear equations have at least three variables
by Theorem~\ref{thm1}. 
In fact,  we prove the following more general result.

\begin{thm}\label{mainthm}
Let $k_B \geq k_A \geq 3$ be positive integers. Then there exists a constant $c>0$ such that the following holds.
Let $A$ and $B$ be
linear equations of lengths $k_A$ and $k_B$ respectively so that their underlying matrices
are both irredundant.\footnote{Recall from the discussion before (\ref{1eq}) this latter condition implies the linear equations $A$ and $B$ are also irredundant.}
If
$$p \leq cn^{-\frac{k_Ak_B-k_A-k_B}{k_Ak_B-k_A}}$$ then 
$
\lim_{n \to \infty} \mathbb{P}[ [n]_p \text{ is  $(A,B)$-Rado}]=0.
$
\end{thm}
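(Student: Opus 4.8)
\smallskip
\noindent\textbf{Proof strategy.}
The plan is to establish the $0$-statement by showing that asymptotically almost surely (a.a.s.) the set $[n]_p$ admits a red/blue colouring with no red $k_A$-distinct solution to $A$ and no blue $k_B$-distinct solution to $B$. If $R\subseteq[n]_p$ denotes the red elements, such a colouring exists exactly when $R$ is \emph{$A$-free} (contains no $k_A$-distinct solution to $A$) and \emph{hits} every $k_B$-distinct solution to $B$ in $[n]_p$, so that $[n]_p\setminus R$ is $B$-free; hence it suffices to prove that a.a.s.\ $[n]_p$ has an $A$-free transversal of the hypergraph of its $k_B$-distinct $B$-solutions. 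One first checks that the bound on $p$ is the conjectured threshold $n^{-1/m(A,B)}$ specialised to single equations: applying \eqref{eq:mAB} to $1\times k_A$ and $1\times k_B$ matrices, the maximum is attained at $W=[k_A]$ and gives $m(A,B)=\tfrac{k_A(k_B-1)}{k_Ak_B-k_A-k_B}$, so $1/m(A,B)=\tfrac{k_Ak_B-k_A-k_B}{k_Ak_B-k_A}$, matching the exponent in the statement, and $1-1/m(A,B)=\tfrac{k_B}{k_A(k_B-1)}$. At such $p$ the set $[n]_p$ typically contains polynomially many $A$-solutions and polynomially many $B$-solutions, so the colouring cannot be obtained simply by deleting a bounded set; the way the solutions are distributed must be exploited.

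To build $R$ I would use an iterative process reminiscent of the asymmetric random Ramsey $0$-statement. Start with $R=\emptyset$ and process the $k_B$-distinct $B$-solutions of $[n]_p$ in a suitable order; whenever the current solution is not yet met by $R$, add to $R$ one of its elements chosen so that $R$ stays $A$-free. If the process never fails, its output $R$ is as required. A failure at a $B$-solution $s$ means that every element $s_i$ of $s$ is \emph{blocked} --- adjoining $s_i$ to $R$ would complete a $k_A$-distinct solution to $A$, so $R$ already contains $k_A-1$ elements forming such a solution with $s_i$, and these were added while handling earlier $B$-solutions. Unwinding this dependency, a failure produces an \emph{obstruction}: a collection of $A$-solutions and $B$-solutions of $[n]_p$ glued together in a controlled tree-like fashion rooted at $s$. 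The crucial structural claim is that, because the maximiser in \eqref{eq:mAB} is $W=[k_A]$, a \emph{minimal} obstruction spans only a bounded number of elements of $[n]_p$.

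It remains to show that a.a.s.\ no minimal obstruction is present, which is a first-moment estimate. A fixed configuration consisting of $\ell_A$ solutions to $A$, $\ell_B$ solutions to $B$ and $v$ elements occurs in $[n]_p$ an expected $O\bigl(n^{\,v-\ell_A-\ell_B}p^{\,v}\bigr)$ number of times --- each single equation imposing one linear constraint --- which for $p\le cn^{-1/m(A,B)}$ equals $O\bigl(c^{\,v}n^{\,v(1-1/m(A,B))-\ell_A-\ell_B}\bigr)$. Minimality forces $\ell_A+\ell_B$ to exceed $v(1-1/m(A,B))$ (this is exactly how $m(A,B)$ is calibrated), so the exponent of $n$ is negative; since a minimal obstruction has bounded size, summing over the finitely many relevant configuration shapes (taking $c$ sufficiently small) yields a bound tending to $0$. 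It is essential here that one does \emph{not} attempt to union-bound over all configurations with negative $n$-exponent, as there are far too many shapes of each size --- this is precisely why the bounded-size reduction in the previous step is needed. (Alternatively, the colouring step can be phrased via the Lov\'asz Local Lemma applied to the conflict hypergraph of the constraints ``not all red on an $A$-solution'' and ``not all blue on a $B$-solution'', whose sparsity is what the structural claim provides.)

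The hard part will be the structural claim --- bounding the size of a minimal obstruction, and showing that failure of the colouring process always exhibits one. Ruling out any individual configuration is routine; the real work is the extremal analysis identifying exactly which gluings of $A$- and $B$-solutions can occur near the critical density $n^{-1/m(A,B)}$, where the rigidity of single equations (each solution being one linear relation, so overlaps are highly constrained) is what makes a bounded bound possible. This is also where the original proof of Theorem~\ref{radores0} has a gap (see Section~\ref{sec:mainproof2}); once it is resolved, passing from the symmetric case $A=B$ to general $k_B\ge k_A\ge3$ is essentially a matter of carrying the asymmetric parameter $m(A,B)$ through the same estimates.
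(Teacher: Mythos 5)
Your high-level framing is sound --- you want an $A$-free transversal of the hypergraph of $B$-solutions, and the first-moment calculus you sketch (a structure with $v$ vertices and $e=\ell_A+\ell_B$ edges has expectation $O\bigl(c^{v}n^{v(1-1/m(A,B))-e}\bigr)$) is the same calculus used in the paper. But the step you flag as ``the hard part'' is not merely hard: as stated it is false, and repairing it forces you into essentially the structure of the paper's proof. The claim that a minimal obstruction produced by the greedy process ``spans only a bounded number of elements'' does not hold. Unwinding a failure, one obtains a tree of $A$- and $B$-solutions rooted at the failed $B$-solution $s$, and there is no a priori bound on its depth: each vertex of $s$ needs a blocking $A$-solution whose other vertices were added at earlier steps, and those earlier additions were in turn forced by earlier $B$-solutions, and so on. In an extremal situation the obstruction is a long alternating chain ($AB$-path), whose length can be $\Theta(\log n)$ or more before it is forced to close up or branch into a dense piece. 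The paper's deterministic lemma reflects exactly this: several of the forbidden configurations ($AB$-paths, $A$-paths, $A$-cycles, $AB$-cycle-paths) have size up to $\log n$, and they are killed in the probabilistic lemma not because the $n$-exponent is negative for a \emph{bounded} structure, but because the $c^{\Theta(\log n)}$ factor coming from the $v-e$ excess overwhelms the ($n$-)positive part of the exponent. If you tried to union-bound over such long structures using only ``finitely many shapes,'' the argument would break.

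A second gap: your plan implicitly assumes the minimal obstruction has $e>v\bigl(1-1/m(A,B)\bigr)$, but you do not justify why failure of the process yields a configuration with this property, and some natural configurations (e.g.\ long $AB$-paths) have $n$-exponent tending to $+\infty$, not a fixed negative value --- they are only controlled by the $c$-factor. You also don't separate the $k_A=k_B$ case, where several exponents that are strictly negative for $k_B>k_A$ (e.g.\ for $A$-cycles, $n^{-s(k_B-k_A)/(k_Ak_B-k_A)}$) degenerate to $n^{0}$; the paper handles $k_A=k_B$ by a different, more delicate argument (Section~\ref{sec:mainproof2}) using the core of the matrix and extra structures such as Pasch configurations and faulty simple paths. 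Finally, note that the paper proceeds via a ``Rado-minimal'' spanning subhypergraph $H$ of the solution hypergraph and the pivotal Claim~\ref{cl:determ} (every $A$-edge of $H$ meets, at each of its vertices, a $B$-edge intersecting it only there, and vice versa); this is what lets one extend a partial structure indefinitely until a contradiction is reached, and is the engine driving the deterministic dichotomy ``bounded dense piece or long thin piece.'' Your greedy-transversal framing could in principle be made to work, but only after importing this dichotomy and the $\log n$-cutoff; the bounded-obstruction shortcut does not exist.
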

As we now show, Theorem~\ref{mainthm} easily implies the $0$-statement of Conjecture~\ref{conj} for linear equations.
\begin{cor}
Let $A_1,\dots,A_r$ be irredundant homogeneous  partition regular linear equations, each on at least $3$ variables, where
$m(A_1)\geq m(A_2)\geq \dots \geq m(A_r)$. Then there exists $c>0$ such that the following holds:
\begin{align*}
\lim_{n \to \infty} \mathbb{P}[[n]_p \text{ is $(A_1,\dots, A_r)$-Rado}]=
0 \text{ if } p<cn^{-1/m(A_1,A_2)}.
\end{align*}
\end{cor}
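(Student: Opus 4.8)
The plan is to derive the corollary from Theorem~\ref{mainthm} via two elementary observations: monotonicity of the Rado property in the number of colours, and an explicit evaluation of $m(A_1,A_2)$ for homogeneous linear equations. \emph{Setup.} Each $A_i$ is homogeneous, so it coincides with its own underlying matrix, and is irredundant and partition regular; by the hypothesis (equivalently, by Theorem~\ref{thm1}) it has some length $k_i\ge 3$, and $m(A_i)=(k_i-1)/(k_i-2)=1+1/(k_i-2)$ by \eqref{1eq}. Since $1+1/(k-2)$ is strictly decreasing in $k$, the assumption $m(A_1)\ge m(A_2)$ forces $k_1\le k_2$. I would put $A:=A_1$, $B:=A_2$, $k_A:=k_1$, $k_B:=k_2$, so that $k_B\ge k_A\ge 3$ and $A$, $B$ and their (identical) underlying matrices are all irredundant. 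Hence Theorem~\ref{mainthm} applies to the pair $(A,B)$ and supplies a constant $c>0$ with $\lim_{n\to\infty}\mathbb P[[n]_p\text{ is }(A,B)\text{-Rado}]=0$ whenever $p\le c\,n^{-(k_Ak_B-k_A-k_B)/(k_Ak_B-k_A)}$.

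Next I would record the monotonicity fact: if $[n]_p$ is $(A_1,\dots,A_r)$-Rado then it is $(A,B)$-Rado, so it suffices to prove the conclusion for the pair $(A,B)$. To see this, suppose a set $S$ is \emph{not} $(A,B)$-Rado, witnessed by a $2$-colouring of $S$ with no $1$-coloured $k_A$-distinct solution of $A$ and no $2$-coloured $k_B$-distinct solution of $B$. Regard this as an $r$-colouring of $S$ using only the colours $1$ and $2$. Since colours $3,\dots,r$ are unused and each $A_i$ has $k_i\ge 3$ variables, there is no $i$-coloured $k_i$-distinct solution of $A_i$ for any $i\ge 3$ either, so $S$ is not $(A_1,\dots,A_r)$-Rado. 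Taking contrapositives gives $\mathbb P[[n]_p\text{ is }(A_1,\dots,A_r)\text{-Rado}]\le\mathbb P[[n]_p\text{ is }(A,B)\text{-Rado}]$.

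Finally I would check that the exponent provided by Theorem~\ref{mainthm} is precisely $1/m(A_1,A_2)=1/m(A',B')$, using \eqref{eq:mAB}. Set $t:=1/m(B)=(k_B-2)/(k_B-1)\in(0,1)$. As $A'$ is a single nonzero row, $\rank(A')=1$, and for a partition $W\dot\cup\overline{W}=[k_A]$ with $|W|\ge 2$ one has $\rank(A'_{\overline{W}})=1$ if $\overline{W}\ne\emptyset$ and $\rank(A'_{\overline{W}})=0$ if $\overline{W}=\emptyset$ (i.e.\ $|W|=k_A$). So the summand in \eqref{eq:mAB} equals $|W|/(|W|-1+t)$ for $2\le|W|\le k_A-1$, and $k_A/(k_A-2+t)$ for $|W|=k_A$. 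The former is decreasing in $|W|$, hence maximised over $2\le|W|\le k_A-1$ at $|W|=2$, and a short computation (using $t\ge(k_A-2)/(k_A-1)$, which follows from $k_B\ge k_A$) shows $k_A/(k_A-2+t)\ge 2/(1+t)$, so the maximum in \eqref{eq:mAB} is attained at $|W|=k_A$. Substituting $t=(k_B-2)/(k_B-1)$ and simplifying,
\[
m(A_1,A_2)=\frac{k_A}{k_A-2+\tfrac{k_B-2}{k_B-1}}=\frac{k_A(k_B-1)}{k_Ak_B-k_A-k_B},\qquad\text{so}\qquad\frac{1}{m(A_1,A_2)}=\frac{k_Ak_B-k_A-k_B}{k_Ak_B-k_A}.
\]
Combining this with the previous two paragraphs proves the corollary. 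The only step requiring any care is the verification that the maximum defining $m(A_1,A_2)$ is attained at $W=[k_A]$; beyond that one-line inequality there is no genuine obstacle, as everything else is bookkeeping and an appeal to Theorem~\ref{mainthm}.
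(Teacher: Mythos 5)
Your proposal is correct and follows essentially the same route as the paper: reduce to the pair $(A_1,A_2)$ by the monotonicity observation, compute $m(A_1,A_2)=\frac{k_Ak_B-k_A}{k_Ak_B-k_A-k_B}$ from \eqref{eq:mAB} and \eqref{1eq}, and invoke Theorem~\ref{mainthm}. The only difference is that you spell out the verification that the maximum in \eqref{eq:mAB} is attained at $W=[k_A]$ and the unused-colours argument, both of which the paper leaves implicit.
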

\proof
Write $A:=A_1$ and $B:=A_2$, so that $A$ and $B$ are linear equations of lengths $k_A$ and $k_B$ respectively.
As $m(A)\geq m(B)$ we have that $k_B\geq k_A\geq 3$.
Further, by definition and (\ref{1eq}),
$$m(A,B)=\frac{k_Ak_B-k_A}{k_Ak_B-k_A-k_B}.$$
Indeed, the term in (\ref{eq:mAB}) is maximised when $W=[k_A]$.
So if $p\leq c n^{-1/m(A,B)}$ then Theorem~\ref{mainthm} implies that
$\lim_{n \to \infty} \mathbb{P}[ [n]_p \text{ is  $(A,B)$-Rado}]=0.$
This immediately implies that \\
$\lim_{n \to \infty} \mathbb{P}[[n]_p \text{ is $(A_1,\dots, A_r)$-Rado}]=
0.$
\endproof
Note that Theorem~\ref{mainthm} allows for $A$ and $B$ to be inhomogeneous equations (i.e. $a_1x_1+\dots+a_kx_k=b$, $b \not=0$). It also allows us to consider linear equations  that are not partition regular. For example, if $A$ is $2x+2y=z$, then it is  not partition regular, however, $\mathbb N$ is $(A,2)$-Rado (as observed in~\cite{rado}); 
so it is natural to seek random Rado-type results for such equations also.
Furthermore, as we now explain, when one of the linear equations or its underlying matrix is redundant, the random Rado problem is trivial:
\begin{itemize}
\item Consider linear equations $A_1,\dots, A_r$. If for some $i \in [r]$, $A_i$ is redundant then even 
$\mathbb N$ is not $(A_1,\dots, A_r)$-Rado; indeed, colour every element of $\mathbb N$ with the $i$th 
colour. Thus, the random Rado problem in this case is trivial.

\item Suppose that each of $A_1,\dots, A_r$ is irredundant with length at least $3$, but for some $i \in [r]$, the underlying matrix
$A'_i$ of $A_i$ is redundant.
Then $A_i$ corresponds to $a_1 x_1 + \dots + a_k x_k=b$ 
where $a_i, b$ are all positive integers.\footnote{First observe that $A_i$ corresponds to $a_1 x_1+ \cdots+ a_k x_k =b$ where $a_i,b \in \mathbb{Z} \setminus \{0\}$ and $k \geq 3$. If at least one $a_i$ is positive and at least one $a_j$ is negative, then $a_1 x_1+ \cdots+ a_k x_k =0$ would have a solution in $\mathbb{N}$, which is not true since $A'_i$ is redundant. Since $A_i$ is irredundant we therefore obtain that $a_i,b$ are all positive or all negative, and without loss of generality we may assume all positive.}
In this case there are a finite number of solutions to $A_i$ in $\mathbb{N}$.
Thus, if $p=o(1)$ then with high probability (w.h.p.) no solutions to $A_i$ will be present in $[n]_p$; again this immediately
implies that w.h.p. $[n]_p$ is not $(A_1,\dots, A_r)$-Rado. 
Note that this class includes linear equations $A_1,\dots,A_r$ such that $\mathbb{N}$ is $(A_1,\dots,A_r)$-Rado. 
For example, let $r=2$, $A_1$ be $x+y=2z$ and $A_2$ be $x+y+z=C$ where $C$ is a sufficiently large constant.\footnote{In particular, $[C]$ is $(A_1,A_2)$-Rado: to avoid a red solution to $A_1$ only $o(C)$ colours may be coloured red, 
so most numbers in $[C]$ are blue, and a blue solution to $A_2$ can be found.}

\end{itemize}
It would  be interesting to deduce a matching $1$-statement for linear equations covered by Theorem~\ref{mainthm} but not by Conjecture~\ref{conj}; see Section~\ref{conc} for further discussion on this.

\smallskip

As mentioned earlier, we give a proof of a generalisation of Theorem~\ref{radores0}. Before we can state this result we need some more notation.

Define an $\ell \times k$ matrix $A$ of full rank to be \emph{strictly balanced} if, for every $W \subseteq [k]$, $2 \leq |W| <k$, the following inequality holds:
\begin{align}\label{eq:strictlyb}
\frac{|W|-1}{|W|-1+\rank(A_{\overline{W}})-\ell} < \frac{k-1}{k-1-\ell}.
\end{align}
Thus, if $A$ is strictly balanced then $m(A)=(k-1)/(k-1-\ell)$.
Given an irredundant partition regular matrix $A$, a \emph{core} $C(A)$ is a matrix obtained from $A$ by deleting rows and columns of $A$,
such that $m(C(A))=m(A)$, which is irredundant, of full rank and is strictly balanced. A core always exists by Lemma 7.1 of~\cite{random4}.

Given an inhomogeneous system of linear equations $Ax=b$, we call $A$ the \emph{underlying matrix} of the system.
We define such a system to be irredundant/redundant analogously to linear equations. 
Note again by Lemma 4.1 in~\cite{krss} that if $A$ is irredundant, then so is $Ax=b$.
Given a system of linear equations $B$, we
 write $C(B)$ to denote a core of the underlying matrix of $B$.

\begin{thm}\label{mainthm2}
Let $k, \ell$ be positive integers such that $k \geq \ell+2$.
Then there exists a constant $c>0$ such that the following holds.
Let $A$ and $B$ be systems of linear equations for which both of their underlying matrices are irredundant, partition regular, and of full rank,
and their cores $C(A)$ and $C(B)$ are both of dimension $\ell \times k$. 
If $$p \leq cn^{-1/m(A,B)}=cn^{-\frac{k-\ell-1}{k-1}}$$ then  
$
\lim_{n \to \infty} \mathbb{P}[ [n]_p \text{ is  $(A,B)$-Rado}]=0.
$
\end{thm}



It is easy to see that this theorem implies 
Theorem~\ref{radores0}; in particular, if $A$ is an $\ell \times k$ irredundant  partition regular matrix of full rank, then $k \geq \ell +2$ (see e.g. 
\cite[Proposition 4.3]{hst}).
Notice that Theorem~\ref{mainthm2} 
also resolves Conjecture~\ref{conj} in the case when $A_1$ and $A_2$ are strictly balanced and have the same dimensions (note $m(A_1)=m(A_2)$ in this case).
Theorem~\ref{mainthm2} as stated does not quite imply Theorem~\ref{mainthm} in the
case when $k_A=k_B$ (as Theorem~\ref{mainthm2} assumes that the underlying matrices of $A$ and $B$ are partition regular). So we in fact prove an even more general (but technical) version of Theorem~\ref{mainthm2} that contains the $k_A=k_B$ case of Theorem~\ref{mainthm};
see Theorem~\ref{mainthm2general} in Section~\ref{sec:mainproof2}.

In Section~\ref{sec:mainproof} we prove Theorem~\ref{mainthm} in the case where $k_B>k_A$. In Section~\ref{31} we outline the approach of the proof of Theorem~\ref{radores0} in~\cite{random4}. In Section~\ref{32} we state 
Theorem~\ref{mainthm2general} which as just described is a generalisation of Theorem~\ref{mainthm2} that also implies the $k_A=k_B$ case of Theorem~\ref{mainthm}.  Theorem~\ref{mainthm2general} is then proved in  Section~\ref{33}. In Section~\ref{conc} we conclude the paper with some open problems.

{\bf Remark:} In  work simultaneous to our own, Zohar~\cite{zohar} has given a proof of
Conjecture~\ref{conj} in the case when each $A_i$ corresponds to an arithmetic progression.


\subsection{Notation}\label{sec:notation}
As in the proof of Theorem~\ref{radores0} in~\cite{random4}, we prove our two main results by considering an
auxiliary hypergraph.
For a (hyper)graph $H$, we define $V(H)$ and $E(H)$ to be the vertex and edge sets of $H$ respectively. 
For a set $A \subseteq V(H)$, we define $H[A]$ to be the induced subgraph of $H$ on the vertex set $A$. 
For an edge set $X \subseteq E(H)$, we define $H - X$ to be hypergraph with vertex set $V(H)$ and edge set $E(H) \setminus X$. 
We use the convention that the set of natural numbers $\mathbb N$ does not include zero.



\section{Proof of Theorem~\ref{mainthm}: the $k_B>k_A$ case}\label{sec:mainproof}

Suppose that $A$ and  $B$ are linear equations as in the statement of the theorem with lengths $k_A$ and $k_B$ respectively where
 $k_B > k_A \geq 3$.

Let $c>0$ be a constant sufficiently small compared to $1/k_A$ and $1/k_B$. 
(So the choice of $c$ depends on $k_A$ and $k_B$ only, and not the particular linear equations $A$ and $B$.)
It suffices to prove the theorem in the case when 
$p=cn^{-\frac{k_Ak_B-k_A-k_B}{k_Ak_B-k_A}}$. 

Consider the  \emph{associated hypergraph} $G=G(n,p,A,B)$: here $V(G):=[n]_p$ and the  edge set of $G$ 
consists of \emph{$A$-edges} which are 
edges of size $k_A$ that precisely correspond to the 
$k_A$-distinct solutions of $A$ in $[n]_p$, and \emph{$B$-edges} which are 
edges of size $k_B$ that precisely correspond to the 
$k_B$-distinct solutions of $B$ in $[n]_p$. 

Our aim is to show that w.h.p. there is a red-blue colouring of the vertices of $G$ so that 
there are no red $A$-edges and no blue $B$-edges.
In particular, call $G$ \emph{Rado} if it has the property that however its vertices are red-blue coloured, 
there is always a red $A$-edge or a blue $B$-edge; call 
a spanning 
subgraph $H$ of $G$ \emph{Rado minimal} if $H$ is Rado however it is no longer Rado under the deletion of any edge. 
(Such a definition makes sense since being Rado is a monotone hypergraph property.) 
If $G$ is Rado, fix a Rado minimal subgraph $H$ of $G$. Otherwise set $H:=\emptyset$. 
So it suffices to prove that w.h.p. $H=\emptyset$. 

The first claim is a generalisation of a statement (Proposition~7.4 in~\cite{random4}) used in the proof of
Theorem~\ref{radores0}. The proof follows in the same manner.
\begin{claim}\label{cl:determ}
Suppose $H$ is non-empty (i.e., $H$ is Rado minimal). Then for every $A$-edge $a$ of $H$ and every vertex $v \in a$, there exists a $B$-edge $b$ such that $a \cap b =v$. 
Similarly, for every $B$-edge $b$ of $H$ and every vertex $v \in b$, there exists an $A$-edge $a$ such that $a \cap b =v$.
\end{claim}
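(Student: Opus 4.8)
The plan is to prove Claim~\ref{cl:determ} by exploiting the minimality of the Rado-minimal subgraph $H$, in exactly the style of Proposition~7.4 of~\cite{random4}. Fix an $A$-edge $a \in E(H)$ and a vertex $v \in a$. Since $H$ is Rado minimal, the hypergraph $H - \{a\}$ is \emph{not} Rado, so there exists a red-blue colouring $\chi$ of $V(H)$ with no red $A$-edge in $H - \{a\}$ and no blue $B$-edge in $H - \{a\}$. Because $H$ itself is Rado, $\chi$ must produce a red $A$-edge or a blue $B$-edge in $H$; as $H$ and $H-\{a\}$ differ only in the edge $a$, the only possibility is that $a$ is monochromatic red under $\chi$. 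In particular every vertex of $a$, including $v$, is coloured red.

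Now I would recolour $v$ blue, obtaining a colouring $\chi'$ that agrees with $\chi$ everywhere except at $v$. Under $\chi'$ the edge $a$ is no longer monochromatic red (it contains the blue vertex $v$), and no other $A$-edge has become monochromatic red, since the only vertex whose colour changed went from red to blue and so can only destroy, never create, red $A$-edges. Hence $\chi'$ has no red $A$-edge at all in $H$. Since $H$ is Rado, $\chi'$ must therefore create a blue $B$-edge $b$ in $H$. But $\chi$ had no blue $B$-edge in $H$, and the only difference between $\chi$ and $\chi'$ is that $v$ became blue; so $b$ must be a $B$-edge containing $v$, all of whose other vertices were already blue under $\chi$ (and remain blue under $\chi'$).

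It remains to check that $a \cap b = \{v\}$, i.e. that $b$ shares no vertex with $a$ other than $v$. Suppose $u \in a \cap b$ with $u \neq v$. Then $u \in a$, so $u$ is red under $\chi$ (as shown above) and remains red under $\chi'$ since $u \neq v$; but $u \in b$ and $b$ is a blue $B$-edge under $\chi'$, so $u$ is blue — a contradiction. Thus $a \cap b = \{v\}$, which is precisely the first assertion of the claim. The second assertion, concerning a $B$-edge $b$ and a vertex $v \in b$, follows by the symmetric argument: $H - \{b\}$ is not Rado, so some colouring forces $b$ to be monochromatic blue, recolouring $v$ red removes all blue $B$-edges, and Rado-ness then forces a red $A$-edge through $v$ meeting $b$ only in $v$.

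I do not anticipate a genuine obstacle here — the argument is a direct adaptation of the symmetric case in~\cite{random4}, and the asymmetry (two edge sizes, two colour classes) does not interfere with the colour-swapping logic. The one point requiring a little care is the monotonicity observation that recolouring a single vertex from red to blue cannot create a new red $A$-edge (and dually), together with keeping track of which colouring ($\chi$ or $\chi'$) each statement refers to; but these are bookkeeping matters rather than real difficulties.
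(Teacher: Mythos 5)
Your argument is correct and is essentially the paper's own proof (which itself follows Proposition~7.4 of~\cite{random4}): use Rado minimality to colour $H-a$ with no red $A$-edges and no blue $B$-edges, deduce that $a$ must be entirely red, recolour $v$ blue, and conclude that a blue $B$-edge through $v$ meeting $a$ only in $v$ must appear. The only difference is presentational — the paper phrases the last step as a contradiction (assuming every $B$-edge through $v$ meets $a$ again), while you extract the edge $b$ directly — so there is nothing substantive to change.
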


\begin{proof}
Let $a$ be an $A$-edge, and let $v \in a$ be such that for all $B$-edges $b$ such that $v \in b$, 
there exists another vertex $w \in a$ such that $w \in b$. Since $H$ is Rado minimal, 
it is possible to red-blue colour $H-a$ so that 
there are no red $A$-edges or blue $B$-edges. Thus once we add $a$ back, it must be the case that 
$a$ is red since $H$ is Rado. But then change the colour of $v$ to blue. 
If there is a red $A$-edge or blue $B$-edge now, it must be a blue $B$-edge which contains $v$. 
However all $B$-edges containing $v$ also contain another vertex from $a$ which is red, thus we obtain a contradiction. 
The second statement follows by a symmetrical argument. 
\end{proof}

\subsection{Notation}
We start by defining some hypergraph notation. Given an edge order $e_1, \dots, e_t$ of the edges of a hypergraph, 
we call a vertex $v$ \emph{new in $e_i$} if $v \in e_i$ but $v \not \in e_j$ for all $j<i$.
Otherwise we call $v \in e_i$ \emph{old in $e_i$}. 
Clearly each vertex is new in one edge, and old in any subsequent edge that it appears in. 
We call an edge order \emph{valid} if there is at least one new vertex in every edge. This notion is crucial for our proof. Indeed, if one can show a hypergraph $F$ has a valid edge order then (via Claim~\ref{cl:copiesofS} below) we can obtain a good upper bound on the expected number of copies of $F$ in $G$.
\COMMENT{AT added 25 april, as I wanted to emphasise to reader its a definition they must remember!}

Further, define the following hypergraphs.
\begin{enumerate}
\item[(A1)] An \emph{$A$-path of length $s$} (for $s \in \mathbb N$) consists of
 a set of $s$ $A$-edges $a_1, \dots, a_s$ where $|a_i \cap a_j| = 1$ for $i<j$ if $j-i=1$, and $0$ otherwise. 
\item[(A2)] An \emph{$A$-cycle of length $s$} (for $s\geq 3$)
consists of a set of $s$ $A$-edges $a_1, \dots, a_s$ where given any $i<j$, $|a_i \cap a_j| = 1$   
if (i) $j-i=1$ or (ii) $(i,j)=(1,s)$, and $|a_i \cap a_j| = 0$ otherwise. 
\item[(A3)] An \emph{$A$-tree} spans a set of $s$ $A$-edges such that there exists an edge order $a_1, \dots, a_s$ 
where for each $2 \leq i \leq s$, $a_i$ has precisely one old vertex. 

\item[(AB0)] An \emph{$AB$-set} consists of a $B$-edge $b$ with vertices $v_1,\dots,v_{k_B}$ and 
a set of pairwise disjoint $A$-edges $a_1,\dots,a_{k_B}$ with $a_i \cap b = v_i$ for each $i \in [k_B]$. 

\item[(AB1)] An \emph{$AB$-path of length $t$} (for $t \in \mathbb N$) consists of a collection of 
pairwise disjoint $B$-edges $b_i$ (for $i \in [t]$) and 
a collection of pairwise disjoint $A$-edges $a_{j}$ (for $j \in [t(k_B-1)+1]$) such that 
$b_i$ together with $a_{(i-1)(k_B-1)+1},\dots  ,a_{i(k_B-1)+1}$ 
forms an $AB$-set (for each $i \in [t]$).
\item[(AB2)] An \emph{$AB$-cycle of length $t$} (for $t \geq 2$) consists of a collection of 
$B$-edges $b_i$ (for each $i \in [t]$) and 
a collection of pairwise disjoint $A$-edges $a_{j}$ (for $j \in [t(k_B-1)]$) such that: 
\begin{itemize}
\item given any $i \in [t-1]$, $b_i$ together with $a_{(i-1)(k_B-1)+1},\dots  ,a_{i(k_B-1)+1}$ forms an $AB$-set;
\item $b_t$ together with $a_{(t-1)(k_B-1)+1},\dots ,a_{t(k_B-1)}$ and $a_1$ forms an $AB$-set.
\item $|b_i \cap b_j|=0$ for all $i<j$, except for $(i,j)=(1,t)$, where we have either $|b_1 \cap b_t|=0$ or $|b_1 \cap b_t|=1$.
\end{itemize}

\item[(AB3)] An \emph{$AB$-cycle-path with parameters $s,t$} ($s \not = 1$ and $t$ 
are non-negative integers where $(s,t)\not =(0,0)$), is the following structure:
If $s=0$ then it is an $AB$-path of length $t$.
If $t=0$ then it is an $AB$-cycle of length $s$.
If $s \geq 2$ and $t \geq 1$, then it is an $AB$-cycle $S$ of length $s$ together with an $AB$-path $T$ of length $t$,
where, letting $b_1,\dots,b_t$ and $a_1$ be as in the definition (AB1) of $T$, 
we have $V(S) \cap V(T)=a_1$, $E(S)\cap E(T)=\{a_1\}$ and the vertex $v:=a_1\cap b_1$
does not lie in any $B$-edge in $S$.
\end{enumerate}
Note that for all of the above hypergraphs, one can easily derive a valid edge order. 
Further, note that $AB$-paths, $A$-paths and $A$-cycles of a fixed size and $AB$-sets are unique,
whereas there are two different $AB$-cycles of a fixed size; 
one where the first and final $B$-edges intersect, and one where they do not.
See Figures~\ref{figAB0}--\ref{figAB3} for examples of (AB0)--(AB3) respectively;
in all of these pictures the $B$-edges are shaded in grey to help emphasise which edges are $A$-edges and which are $B$-edges (though recall that the edges do not have colours).

\begin{figure}[hb]
\begin{center}
\begin{tikzpicture}[scale=0.90]
\node[vertex] at (0,0) (1) {};
\node[vertex] at (0,1) (2) {};
\node[vertex] at (0,2) (3) {};
\node[vertex] at (0,3) (4) {};
\node[vertex] at (1,0) (5) {};
\node[vertex] at (1,1) (6) {};
\node[vertex] at (1,2) (7) {};
\node[vertex] at (1,3) (8) {};
\node[vertex] at (2,0) (9) {};
\node[vertex] at (2,1) (10) {};
\node[vertex] at (2,2) (11) {};
\node[vertex] at (2,3) (12) {};
\node[vertex] at (3,0) (13) {};
\node[vertex] at (3,1) (14) {};
\node[vertex] at (3,2) (15) {};
\node[vertex] at (3,3) (16) {};
\node[vertex] at (4,0) (17) {};
\node[vertex] at (4,1) (18) {};
\node[vertex] at (4,2) (19) {};
\node[vertex] at (4,3) (20) {};

\filldraw[fill opacity=0.4,fill=gray!70] 
($(1) + (0,0.34)$) to[out=180,in=90] ($(1) + (-0.34,0)$)  to[out=270,in=180] ($(1) + (0,-0.34)$)  to ($(17) + (0,-0.34)$) to[out=0,in=270] ($(17) + (0.34,0)$) to[out=90,in=0] ($(17) + (0,0.34)$) to ($(1) + (0,0.34)$);

\filldraw[fill opacity=0,fill=white!70] 
($(4) + (0.4,0)$) to[out=90,in=0] ($(4) + (0,0.4)$)  to[out=180,in=90] ($(4) + (-0.4,0)$)  to ($(1) + (-0.4,0)$) to[out=270,in=180] ($(1) + (0,-0.4)$) to[out=0,in=270] ($(1) + (0.4,0)$) to ($(4) + (0.4,0)$);

\filldraw[fill opacity=0,fill=white!70] 
($(8) + (0.4,0)$) to[out=90,in=0] ($(8) + (0,0.4)$)  to[out=180,in=90] ($(8) + (-0.4,0)$)  to ($(5) + (-0.4,0)$) to[out=270,in=180] ($(5) + (0,-0.4)$) to[out=0,in=270] ($(5) + (0.4,0)$) to ($(8) + (0.4,0)$);

\filldraw[fill opacity=0,fill=white!70] 
($(12) + (0.4,0)$) to[out=90,in=0] ($(12) + (0,0.4)$)  to[out=180,in=90] ($(12) + (-0.4,0)$)  to ($(9) + (-0.4,0)$) to[out=270,in=180] ($(9) + (0,-0.4)$) to[out=0,in=270] ($(9) + (0.4,0)$) to ($(12) + (0.4,0)$);

\filldraw[fill opacity=0,fill=white!70] 
($(16) + (0.4,0)$) to[out=90,in=0] ($(16) + (0,0.4)$)  to[out=180,in=90] ($(16) + (-0.4,0)$)  to ($(13) + (-0.4,0)$) to[out=270,in=180] ($(13) + (0,-0.4)$) to[out=0,in=270] ($(13) + (0.4,0)$) to ($(16) + (0.4,0)$);

\filldraw[fill opacity=0,fill=white!70] 
($(20) + (0.4,0)$) to[out=90,in=0] ($(20) + (0,0.4)$)  to[out=180,in=90] ($(20) + (-0.4,0)$)  to ($(17) + (-0.4,0)$) to[out=270,in=180] ($(17) + (0,-0.4)$) to[out=0,in=270] ($(17) + (0.4,0)$) to ($(20) + (0.4,0)$);

\node at (4.7,0) {$b$};
\node at (0,3.7) {$a_1$};
\node at (1,3.7) {$a_2$};
\node at (2,3.7) {$a_3$};
\node at (3,3.7) {$a_4$};
\node at (4,3.7) {$a_5$};
\end{tikzpicture}
\end{center}
\caption{An example of an $AB$-set with $k_A=4$ and $k_B=5$. The $B$-edge is shaded in grey.}
\label{figAB0}
%
%
\begin{center}
\begin{tikzpicture}[scale=0.90]
\node[vertex] at (0,0) (1) {};
\node[vertex] at (0,1) (2) {};
\node[vertex] at (0,2) (3) {};
\node[vertex] at (0,3) (4) {};
\node[vertex] at (1,0) (5) {};
\node[vertex] at (1,1) (6) {};
\node[vertex] at (1,2) (7) {};
\node[vertex] at (1,3) (8) {};
\node[vertex] at (2,0) (9) {};
\node[vertex] at (2,1) (10) {};
\node[vertex] at (2,2) (11) {};
\node[vertex] at (2,3) (12) {};
\node[vertex] at (3,0) (13) {};
\node[vertex] at (3,1) (14) {};
\node[vertex] at (3,2) (15) {};
\node[vertex] at (3,3) (16) {};
\node[vertex] at (4,0) (17) {};
\node[vertex] at (4,1) (18) {};
\node[vertex] at (4,2) (19) {};
\node[vertex] at (4,3) (20) {};

\node[vertex] at (5,0) (21) {};
\node[vertex] at (5,1) (22) {};
\node[vertex] at (5,2) (23) {};
\node[vertex] at (5,3) (24) {};
\node[vertex] at (6,0) (25) {};
\node[vertex] at (6,1) (26) {};
\node[vertex] at (6,2) (27) {};
\node[vertex] at (6,3) (28) {};
\node[vertex] at (7,0) (29) {};
\node[vertex] at (7,1) (30) {};
\node[vertex] at (7,2) (31) {};
\node[vertex] at (7,3) (32) {};
\node[vertex] at (8,0) (33) {};
\node[vertex] at (8,1) (34) {};
\node[vertex] at (8,2) (35) {};
\node[vertex] at (8,3) (36) {};

\node[vertex] at (9,0) (37) {};
\node[vertex] at (9,1) (38) {};
\node[vertex] at (9,2) (39) {};
\node[vertex] at (9,3) (40) {};
\node[vertex] at (10,0) (41) {};
\node[vertex] at (10,1) (42) {};
\node[vertex] at (10,2) (43) {};
\node[vertex] at (10,3) (44) {};
\node[vertex] at (11,0) (45) {};
\node[vertex] at (11,1) (46) {};
\node[vertex] at (11,2) (47) {};
\node[vertex] at (11,3) (48) {};
\node[vertex] at (12,0) (49) {};
\node[vertex] at (12,1) (50) {};
\node[vertex] at (12,2) (51) {};
\node[vertex] at (12,3) (52) {};

\filldraw[fill opacity=0.4,fill=gray!70] 
($(1) + (0,0.34)$) to[out=180,in=90] ($(1) + (-0.34,0)$)  to[out=270,in=180] ($(1) + (0,-0.34)$)  to ($(17) + (0,-0.34)$) to[out=0,in=270] ($(17) + (0.34,0)$) to[out=90,in=0] ($(17) + (0,0.34)$) to ($(1) + (0,0.34)$);

\filldraw[fill opacity=0.4,fill=gray!70] 
($(20) + (0,0.34)$) to[out=180,in=90] ($(20) + (-0.34,0)$)  to[out=270,in=180] ($(20) + (0,-0.34)$)  to ($(36) + (0,-0.34)$) to[out=0,in=270] ($(36) + (0.34,0)$) to[out=90,in=0] ($(36) + (0,0.34)$) to ($(20) + (0,0.34)$);

\filldraw[fill opacity=0.4,fill=gray!70] 
($(33) + (0,0.34)$) to[out=180,in=90] ($(33) + (-0.34,0)$)  to[out=270,in=180] ($(33) + (0,-0.34)$)  to ($(49) + (0,-0.34)$) to[out=0,in=270] ($(49) + (0.34,0)$) to[out=90,in=0] ($(49) + (0,0.34)$) to ($(33) + (0,0.34)$);

\filldraw[fill opacity=0,fill=white!70] 
($(4) + (0.4,0)$) to[out=90,in=0] ($(4) + (0,0.4)$)  to[out=180,in=90] ($(4) + (-0.4,0)$)  to ($(1) + (-0.4,0)$) to[out=270,in=180] ($(1) + (0,-0.4)$) to[out=0,in=270] ($(1) + (0.4,0)$) to ($(4) + (0.4,0)$);

\filldraw[fill opacity=0,fill=white!70] 
($(8) + (0.4,0)$) to[out=90,in=0] ($(8) + (0,0.4)$)  to[out=180,in=90] ($(8) + (-0.4,0)$)  to ($(5) + (-0.4,0)$) to[out=270,in=180] ($(5) + (0,-0.4)$) to[out=0,in=270] ($(5) + (0.4,0)$) to ($(8) + (0.4,0)$);

\filldraw[fill opacity=0,fill=white!70] 
($(12) + (0.4,0)$) to[out=90,in=0] ($(12) + (0,0.4)$)  to[out=180,in=90] ($(12) + (-0.4,0)$)  to ($(9) + (-0.4,0)$) to[out=270,in=180] ($(9) + (0,-0.4)$) to[out=0,in=270] ($(9) + (0.4,0)$) to ($(12) + (0.4,0)$);

\filldraw[fill opacity=0,fill=white!70] 
($(16) + (0.4,0)$) to[out=90,in=0] ($(16) + (0,0.4)$)  to[out=180,in=90] ($(16) + (-0.4,0)$)  to ($(13) + (-0.4,0)$) to[out=270,in=180] ($(13) + (0,-0.4)$) to[out=0,in=270] ($(13) + (0.4,0)$) to ($(16) + (0.4,0)$);

\filldraw[fill opacity=0,fill=white!70] 
($(20) + (0.4,0)$) to[out=90,in=0] ($(20) + (0,0.4)$)  to[out=180,in=90] ($(20) + (-0.4,0)$)  to ($(17) + (-0.4,0)$) to[out=270,in=180] ($(17) + (0,-0.4)$) to[out=0,in=270] ($(17) + (0.4,0)$) to ($(20) + (0.4,0)$);

\filldraw[fill opacity=0,fill=white!70] 
($(24) + (0.4,0)$) to[out=90,in=0] ($(24) + (0,0.4)$)  to[out=180,in=90] ($(24) + (-0.4,0)$)  to ($(21) + (-0.4,0)$) to[out=270,in=180] ($(21) + (0,-0.4)$) to[out=0,in=270] ($(21) + (0.4,0)$) to ($(24) + (0.4,0)$);

\filldraw[fill opacity=0,fill=white!70] 
($(28) + (0.4,0)$) to[out=90,in=0] ($(28) + (0,0.4)$)  to[out=180,in=90] ($(28) + (-0.4,0)$)  to ($(25) + (-0.4,0)$) to[out=270,in=180] ($(25) + (0,-0.4)$) to[out=0,in=270] ($(25) + (0.4,0)$) to ($(28) + (0.4,0)$);

\filldraw[fill opacity=0,fill=white!70] 
($(32) + (0.4,0)$) to[out=90,in=0] ($(32) + (0,0.4)$)  to[out=180,in=90] ($(32) + (-0.4,0)$)  to ($(29) + (-0.4,0)$) to[out=270,in=180] ($(29) + (0,-0.4)$) to[out=0,in=270] ($(29) + (0.4,0)$) to ($(32) + (0.4,0)$);

\filldraw[fill opacity=0,fill=white!70] 
($(36) + (0.4,0)$) to[out=90,in=0] ($(36) + (0,0.4)$)  to[out=180,in=90] ($(36) + (-0.4,0)$)  to ($(33) + (-0.4,0)$) to[out=270,in=180] ($(33) + (0,-0.4)$) to[out=0,in=270] ($(33) + (0.4,0)$) to ($(36) + (0.4,0)$);

\filldraw[fill opacity=0,fill=white!70] 
($(40) + (0.4,0)$) to[out=90,in=0] ($(40) + (0,0.4)$)  to[out=180,in=90] ($(40) + (-0.4,0)$)  to ($(37) + (-0.4,0)$) to[out=270,in=180] ($(37) + (0,-0.4)$) to[out=0,in=270] ($(37) + (0.4,0)$) to ($(40) + (0.4,0)$);

\filldraw[fill opacity=0,fill=white!70] 
($(44) + (0.4,0)$) to[out=90,in=0] ($(44) + (0,0.4)$)  to[out=180,in=90] ($(44) + (-0.4,0)$)  to ($(41) + (-0.4,0)$) to[out=270,in=180] ($(41) + (0,-0.4)$) to[out=0,in=270] ($(41) + (0.4,0)$) to ($(44) + (0.4,0)$);

\filldraw[fill opacity=0,fill=white!70] 
($(48) + (0.4,0)$) to[out=90,in=0] ($(48) + (0,0.4)$)  to[out=180,in=90] ($(48) + (-0.4,0)$)  to ($(45) + (-0.4,0)$) to[out=270,in=180] ($(45) + (0,-0.4)$) to[out=0,in=270] ($(45) + (0.4,0)$) to ($(48) + (0.4,0)$);

\filldraw[fill opacity=0,fill=white!70] 
($(52) + (0.4,0)$) to[out=90,in=0] ($(52) + (0,0.4)$)  to[out=180,in=90] ($(52) + (-0.4,0)$)  to ($(49) + (-0.4,0)$) to[out=270,in=180] ($(49) + (0,-0.4)$) to[out=0,in=270] ($(49) + (0.4,0)$) to ($(52) + (0.4,0)$);

\node at (2.5,-0.7) {$b_1$};
\node at (6.5,3.7) {$b_2$};
\node at (10.5,-0.7) {$b_3$};

\node at (0,3.7) {$a_1$};
\node at (1,3.7) {$a_2$};
\node at (2,3.7) {$a_3$};
\node at (3,3.7) {$a_4$};
\node at (4,3.7) {$a_5$};
\node at (5,-0.7) {$a_6$};
\node at (6,-0.7) {$a_7$};
\node at (7,-0.7) {$a_8$};
\node at (8,-0.7) {$a_9$};
\node at (9,3.7) {$a_{10}$};
\node at (10,3.7) {$a_{11}$};
\node at (11,3.7) {$a_{12}$};
\node at (12,3.7) {$a_{13}$};
\end{tikzpicture}
\end{center}
\caption{An example of an $AB$-path of length $3$ with $k_A=4$ and $k_B=5$. The $B$-edges $b_1,b_2,b_3$ are shaded in grey. The $AB$-sets are $\{b_1, a_1, a_2, a_3, a_4, a_5\}$, $\{b_2, a_5, a_6, a_7, a_8, a_9\}$ and $\{b_3, a_9, a_{10}, a_{11}, a_{12},a_{13}\}$.}
\label{figAB1}
\end{figure}
\begin{figure}[ht]
\begin{center}
\begin{tikzpicture}[scale=0.90]
\node[vertex] at (2,2) (1) {};
\node[vertex] at (3,2) (2) {};
\node[vertex] at (4,2) (3) {};
\node[vertex] at (5,2) (4) {};
\node[vertex] at (1,1) (5) {};
\node[vertex] at (0,0) (6) {};
\node[vertex] at (3,1) (7) {};
\node[vertex] at (3,0) (8) {};
\node[vertex] at (4,1) (9) {};
\node[vertex] at (4,0) (10) {};
\node[vertex] at (6,1) (11) {};
\node[vertex] at (7,0) (12) {};
\node[vertex] at (5,3) (13) {};
\node[vertex] at (5,4) (14) {};
\node[vertex] at (5,5) (15) {};
\node[vertex] at (6,3) (16) {};
\node[vertex] at (7,3) (17) {};
\node[vertex] at (6,4) (18) {};
\node[vertex] at (7,4) (19) {};
\node[vertex] at (6,6) (20) {};
\node[vertex] at (7,7) (21) {};
\node[vertex] at (4,5) (22) {};
\node[vertex] at (3,5) (23) {};
\node[vertex] at (2,5) (24) {};
\node[vertex] at (4,6) (25) {};
\node[vertex] at (4,7) (26) {};
\node[vertex] at (3,6) (27) {};
\node[vertex] at (3,7) (28) {};
\node[vertex] at (1,6) (29) {};
\node[vertex] at (0,7) (30) {};
\node[vertex] at (2,4) (31) {};
\node[vertex] at (2,3) (32) {};
\node[vertex] at (1,4) (33) {};
\node[vertex] at (0,4) (34) {};
\node[vertex] at (1,3) (35) {};
\node[vertex] at (0,3) (36) {};

\filldraw[fill opacity=0.4,fill=gray!70] 
($(1) + (0,0.34)$) to[out=180,in=90] ($(1) + (-0.34,0)$)  to[out=270,in=180] ($(1) + (0,-0.34)$)  to ($(4) + (0,-0.34)$) to[out=0,in=270] ($(4) + (0.34,0)$) to[out=90,in=0] ($(4) + (0,0.34)$) to ($(1) + (0,0.34)$);

\filldraw[fill opacity=0,fill=white!70] 
($(6) + (-0.2,0.2)$) to[out=225,in=135] ($(6) + (-0.2,-0.2)$)  to[out=315,in=225] ($(6) + (0.2,-0.2)$)  to ($(1) + (0.2,-0.2)$) to[out=45,in=305] ($(1) + (0.2,0.2)$) to[out=135,in=45] ($(1) + (-0.2,0.2)$) to ($(6) + (-0.2,0.2)$);

\filldraw[fill opacity=0,fill=white!70] 
($(2) + (0.4,0)$) to[out=90,in=0] ($(2) + (0,0.4)$)  to[out=180,in=90] ($(2) + (-0.4,0)$)  to ($(8) + (-0.4,0)$) to[out=270,in=180] ($(8) + (0,-0.4)$) to[out=0,in=270] ($(8) + (0.4,0)$) to ($(2) + (0.4,0)$);

\filldraw[fill opacity=0,fill=white!70] 
($(3) + (0.4,0)$) to[out=90,in=0] ($(3) + (0,0.4)$)  to[out=180,in=90] ($(3) + (-0.4,0)$)  to ($(10) + (-0.4,0)$) to[out=270,in=180] ($(10) + (0,-0.4)$) to[out=0,in=270] ($(10) + (0.4,0)$) to ($(3) + (0.4,0)$);

\filldraw[fill opacity=0,fill=white!70] 
($(4) + (-0.2,-0.2)$) to[out=135,in=225] ($(4) + (-0.2,0.2)$)  to[out=45,in=135] ($(4) + (0.2,0.2)$)  to ($(12) + (0.2,0.2)$) to[out=305,in=45] ($(12) + (0.2,-0.2)$) to[out=225,in=305] ($(12) + (-0.2,-0.2)$) to ($(4) + (-0.2,-0.2)$);

\filldraw[fill opacity=0.4,fill=gray!70] 
($(20) + (0.4,0)$) to[out=90,in=0] ($(20) + (0,0.4)$)  to[out=180,in=90] ($(20) + (-0.4,0)$)  to ($(11) + (-0.4,0)$) to[out=270,in=180] ($(11) + (0,-0.4)$) to[out=0,in=270] ($(11) + (0.4,0)$) to ($(20) + (0.4,0)$);

\filldraw[fill opacity=0,fill=white!70] 
($(13) + (0,0.34)$) to[out=180,in=90] ($(13) + (-0.34,0)$)  to[out=270,in=180] ($(13) + (0,-0.34)$)  to ($(17) + (0,-0.34)$) to[out=0,in=270] ($(17) + (0.34,0)$) to[out=90,in=0] ($(17) + (0,0.34)$) to ($(13) + (0,0.34)$);

\filldraw[fill opacity=0,fill=white!70] 
($(14) + (0,0.34)$) to[out=180,in=90] ($(14) + (-0.34,0)$)  to[out=270,in=180] ($(14) + (0,-0.34)$)  to ($(19) + (0,-0.34)$) to[out=0,in=270] ($(19) + (0.34,0)$) to[out=90,in=0] ($(19) + (0,0.34)$) to ($(14) + (0,0.34)$);

\filldraw[fill opacity=0,fill=white!70] 
($(15) + (-0.2,0.2)$) to[out=225,in=135] ($(15) + (-0.2,-0.2)$)  to[out=315,in=225] ($(15) + (0.2,-0.2)$)  to ($(21) + (0.2,-0.2)$) to[out=45,in=305] ($(21) + (0.2,0.2)$) to[out=135,in=45] ($(21) + (-0.2,0.2)$) to ($(15) + (-0.2,0.2)$);

\filldraw[fill opacity=0.4,fill=gray!70] 
($(24) + (0,0.34)$) to[out=180,in=90] ($(24) + (-0.34,0)$)  to[out=270,in=180] ($(24) + (0,-0.34)$)  to ($(15) + (0,-0.34)$) to[out=0,in=270] ($(15) + (0.34,0)$) to[out=90,in=0] ($(15) + (0,0.34)$) to ($(24) + (0,0.34)$);

\filldraw[fill opacity=0,fill=white!70] 
($(26) + (0.4,0)$) to[out=90,in=0] ($(26) + (0,0.4)$)  to[out=180,in=90] ($(26) + (-0.4,0)$)  to ($(22) + (-0.4,0)$) to[out=270,in=180] ($(22) + (0,-0.4)$) to[out=0,in=270] ($(22) + (0.4,0)$) to ($(26) + (0.4,0)$);

\filldraw[fill opacity=0,fill=white!70] 
($(28) + (0.4,0)$) to[out=90,in=0] ($(28) + (0,0.4)$)  to[out=180,in=90] ($(28) + (-0.4,0)$)  to ($(23) + (-0.4,0)$) to[out=270,in=180] ($(23) + (0,-0.4)$) to[out=0,in=270] ($(23) + (0.4,0)$) to ($(28) + (0.4,0)$);

\filldraw[fill opacity=0,fill=white!70] 
($(30) + (-0.2,-0.2)$) to[out=135,in=225] ($(30) + (-0.2,0.2)$)  to[out=45,in=135] ($(30) + (0.2,0.2)$)  to ($(24) + (0.2,0.2)$) to[out=305,in=45] ($(24) + (0.2,-0.2)$) to[out=225,in=305] ($(24) + (-0.2,-0.2)$) to ($(30) + (-0.2,-0.2)$);

\filldraw[fill opacity=0.4,fill=gray!70] 
($(29) + (0.4,0)$) to[out=90,in=0] ($(29) + (0,0.4)$)  to[out=180,in=90] ($(29) + (-0.4,0)$)  to ($(5) + (-0.4,0)$) to[out=270,in=180] ($(5) + (0,-0.4)$) to[out=0,in=270] ($(5) + (0.4,0)$) to ($(29) + (0.4,0)$);

\filldraw[fill opacity=0,fill=white!70] 
($(34) + (0,0.34)$) to[out=180,in=90] ($(34) + (-0.34,0)$)  to[out=270,in=180] ($(34) + (0,-0.34)$)  to ($(31) + (0,-0.34)$) to[out=0,in=270] ($(31) + (0.34,0)$) to[out=90,in=0] ($(31) + (0,0.34)$) to ($(34) + (0,0.34)$);

\filldraw[fill opacity=0,fill=white!70] 
($(36) + (0,0.34)$) to[out=180,in=90] ($(36) + (-0.34,0)$)  to[out=270,in=180] ($(36) + (0,-0.34)$)  to ($(32) + (0,-0.34)$) to[out=0,in=270] ($(32) + (0.34,0)$) to[out=90,in=0] ($(32) + (0,0.34)$) to ($(36) + (0,0.34)$);

\node at (3.5,2.6) {$b_1$};
\node at (0.2,1) {$a_1$};
\node at (2.3,1) {$a_2$};
\node at (4.7,1) {$a_3$};
\node at (6.2,0) {$a_4$};
\node at (6.7,1.7) {$b_2$};
\node at (4.3,3) {$a_5$};
\node at (4.3,4) {$a_6$};
\node at (6.2,7) {$a_7$};
\node at (3.5,4.4) {$b_3$};
\node at (4.7,6) {$a_8$};
\node at (2.3,6) {$a_9$};
\node at (0.8,7) {$a_{10}$};
\node at (0.4,5.4) {$b_4$};
\node at (2.7,4) {$a_{11}$};
\node at (2.7,3) {$a_{12}$};
\end{tikzpicture}
\end{center}
\caption{An example of an $AB$-cycle of length $4$, with $k_A=3$ and $k_B=4$. 
The $B$-edges $b_1,b_2,b_3,b_4$ are shaded in grey.
}
\label{figAB2}
%
%
%
\begin{center}
\begin{tikzpicture}[scale=0.9]
\node[vertex] at (6,2) (1) {};
\node[vertex] at (7,2) (2) {};
\node[vertex] at (8,2) (3) {};
\node[vertex] at (9,2) (4) {};
\node[vertex] at (5,1) (5) {};
\node[vertex] at (4,0) (6) {};
\node[vertex] at (7,1) (7) {};
\node[vertex] at (7,0) (8) {};
\node[vertex] at (8,1) (9) {};
\node[vertex] at (8,0) (10) {};
\node[vertex] at (10,1) (11) {};
\node[vertex] at (11,0) (12) {};
\node[vertex] at (9,3) (13) {};
\node[vertex] at (9,4) (14) {};
\node[vertex] at (9,5) (15) {};
\node[vertex] at (10,3) (16) {};
\node[vertex] at (11,3) (17) {};
\node[vertex] at (10,4) (18) {};
\node[vertex] at (11,4) (19) {};
\node[vertex] at (10,6) (20) {};
\node[vertex] at (11,7) (21) {};
\node[vertex] at (8,5) (22) {};
\node[vertex] at (7,5) (23) {};
\node[vertex] at (6,5) (24) {};
\node[vertex] at (8,6) (25) {};
\node[vertex] at (8,7) (26) {};
\node[vertex] at (7,6) (27) {};
\node[vertex] at (7,7) (28) {};
\node[vertex] at (5,6) (29) {};
\node[vertex] at (4,7) (30) {};
\node[vertex] at (6,4) (31) {};
\node[vertex] at (6,3) (32) {};
\node[vertex] at (5,4) (33) {};
\node[vertex] at (4,4) (34) {};
\node[vertex] at (5,3) (35) {};
\node[vertex] at (4,3) (36) {};

\node[vertex] at (0,7) (37) {};
\node[vertex] at (1,7) (38) {};
\node[vertex] at (2,7) (39) {};
\node[vertex] at (0,6) (40) {};
\node[vertex] at (1,6) (41) {};
\node[vertex] at (2,6) (42) {};
\node[vertex] at (0,5) (43) {};
\node[vertex] at (1,5) (44) {};
\node[vertex] at (2,5) (45) {};
\node[vertex] at (0,4) (46) {};
\node[vertex] at (1,4) (47) {};
\node[vertex] at (2,4) (48) {};
\node[vertex] at (0,3) (49) {};
\node[vertex] at (1,3) (50) {};
\node[vertex] at (2,3) (51) {};
\node[vertex] at (1,2) (52) {};
\node[vertex] at (2,2) (53) {};
\node[vertex] at (3,2) (54) {};

\filldraw[fill opacity=0.4,fill=gray!70] 
($(1) + (0,0.34)$) to[out=180,in=90] ($(1) + (-0.34,0)$)  to[out=270,in=180] ($(1) + (0,-0.34)$)  to ($(4) + (0,-0.34)$) to[out=0,in=270] ($(4) + (0.34,0)$) to[out=90,in=0] ($(4) + (0,0.34)$) to ($(1) + (0,0.34)$);

\filldraw[fill opacity=0,fill=white!70] 
($(6) + (-0.2,0.2)$) to[out=225,in=135] ($(6) + (-0.2,-0.2)$)  to[out=315,in=225] ($(6) + (0.2,-0.2)$)  to ($(1) + (0.2,-0.2)$) to[out=45,in=305] ($(1) + (0.2,0.2)$) to[out=135,in=45] ($(1) + (-0.2,0.2)$) to ($(6) + (-0.2,0.2)$);

\filldraw[fill opacity=0,fill=white!70] 
($(2) + (0.4,0)$) to[out=90,in=0] ($(2) + (0,0.4)$)  to[out=180,in=90] ($(2) + (-0.4,0)$)  to ($(8) + (-0.4,0)$) to[out=270,in=180] ($(8) + (0,-0.4)$) to[out=0,in=270] ($(8) + (0.4,0)$) to ($(2) + (0.4,0)$);

\filldraw[fill opacity=0,fill=white!70] 
($(3) + (0.4,0)$) to[out=90,in=0] ($(3) + (0,0.4)$)  to[out=180,in=90] ($(3) + (-0.4,0)$)  to ($(10) + (-0.4,0)$) to[out=270,in=180] ($(10) + (0,-0.4)$) to[out=0,in=270] ($(10) + (0.4,0)$) to ($(3) + (0.4,0)$);

\filldraw[fill opacity=0,fill=white!70] 
($(4) + (-0.2,-0.2)$) to[out=135,in=225] ($(4) + (-0.2,0.2)$)  to[out=45,in=135] ($(4) + (0.2,0.2)$)  to ($(12) + (0.2,0.2)$) to[out=305,in=45] ($(12) + (0.2,-0.2)$) to[out=225,in=305] ($(12) + (-0.2,-0.2)$) to ($(4) + (-0.2,-0.2)$);

\filldraw[fill opacity=0.4,fill=gray!70] 
($(20) + (0.4,0)$) to[out=90,in=0] ($(20) + (0,0.4)$)  to[out=180,in=90] ($(20) + (-0.4,0)$)  to ($(11) + (-0.4,0)$) to[out=270,in=180] ($(11) + (0,-0.4)$) to[out=0,in=270] ($(11) + (0.4,0)$) to ($(20) + (0.4,0)$);

\filldraw[fill opacity=0,fill=white!70] 
($(13) + (0,0.34)$) to[out=180,in=90] ($(13) + (-0.34,0)$)  to[out=270,in=180] ($(13) + (0,-0.34)$)  to ($(17) + (0,-0.34)$) to[out=0,in=270] ($(17) + (0.34,0)$) to[out=90,in=0] ($(17) + (0,0.34)$) to ($(13) + (0,0.34)$);

\filldraw[fill opacity=0,fill=white!70] 
($(14) + (0,0.34)$) to[out=180,in=90] ($(14) + (-0.34,0)$)  to[out=270,in=180] ($(14) + (0,-0.34)$)  to ($(19) + (0,-0.34)$) to[out=0,in=270] ($(19) + (0.34,0)$) to[out=90,in=0] ($(19) + (0,0.34)$) to ($(14) + (0,0.34)$);

\filldraw[fill opacity=0,fill=white!70] 
($(15) + (-0.2,0.2)$) to[out=225,in=135] ($(15) + (-0.2,-0.2)$)  to[out=315,in=225] ($(15) + (0.2,-0.2)$)  to ($(21) + (0.2,-0.2)$) to[out=45,in=305] ($(21) + (0.2,0.2)$) to[out=135,in=45] ($(21) + (-0.2,0.2)$) to ($(15) + (-0.2,0.2)$);

\filldraw[fill opacity=0.4,fill=gray!70] 
($(24) + (0,0.34)$) to[out=180,in=90] ($(24) + (-0.34,0)$)  to[out=270,in=180] ($(24) + (0,-0.34)$)  to ($(15) + (0,-0.34)$) to[out=0,in=270] ($(15) + (0.34,0)$) to[out=90,in=0] ($(15) + (0,0.34)$) to ($(24) + (0,0.34)$);

\filldraw[fill opacity=0,fill=white!70] 
($(26) + (0.4,0)$) to[out=90,in=0] ($(26) + (0,0.4)$)  to[out=180,in=90] ($(26) + (-0.4,0)$)  to ($(22) + (-0.4,0)$) to[out=270,in=180] ($(22) + (0,-0.4)$) to[out=0,in=270] ($(22) + (0.4,0)$) to ($(26) + (0.4,0)$);

\filldraw[fill opacity=0,fill=white!70] 
($(28) + (0.4,0)$) to[out=90,in=0] ($(28) + (0,0.4)$)  to[out=180,in=90] ($(28) + (-0.4,0)$)  to ($(23) + (-0.4,0)$) to[out=270,in=180] ($(23) + (0,-0.4)$) to[out=0,in=270] ($(23) + (0.4,0)$) to ($(28) + (0.4,0)$);

\filldraw[fill opacity=0,fill=white!70] 
($(30) + (-0.2,-0.2)$) to[out=135,in=225] ($(30) + (-0.2,0.2)$)  to[out=45,in=135] ($(30) + (0.2,0.2)$)  to ($(24) + (0.2,0.2)$) to[out=305,in=45] ($(24) + (0.2,-0.2)$) to[out=225,in=305] ($(24) + (-0.2,-0.2)$) to ($(30) + (-0.2,-0.2)$);

\filldraw[fill opacity=0.4,fill=gray!70] 
($(29) + (0.4,0)$) to[out=90,in=0] ($(29) + (0,0.4)$)  to[out=180,in=90] ($(29) + (-0.4,0)$)  to ($(5) + (-0.4,0)$) to[out=270,in=180] ($(5) + (0,-0.4)$) to[out=0,in=270] ($(5) + (0.4,0)$) to ($(29) + (0.4,0)$);

\filldraw[fill opacity=0,fill=white!70] 
($(34) + (0,0.34)$) to[out=180,in=90] ($(34) + (-0.34,0)$)  to[out=270,in=180] ($(34) + (0,-0.34)$)  to ($(31) + (0,-0.34)$) to[out=0,in=270] ($(31) + (0.34,0)$) to[out=90,in=0] ($(31) + (0,0.34)$) to ($(34) + (0,0.34)$);

\filldraw[fill opacity=0,fill=white!70] 
($(36) + (0,0.34)$) to[out=180,in=90] ($(36) + (-0.34,0)$)  to[out=270,in=180] ($(36) + (0,-0.34)$)  to ($(32) + (0,-0.34)$) to[out=0,in=270] ($(32) + (0.34,0)$) to[out=90,in=0] ($(32) + (0,0.34)$) to ($(36) + (0,0.34)$);

\filldraw[fill opacity=0.4,fill=gray!70] 
($(37) + (0,0.34)$) to[out=180,in=90] ($(37) + (-0.34,0)$)  to[out=270,in=180] ($(37) + (0,-0.34)$)  to ($(30) + (0,-0.34)$) to[out=0,in=270] ($(30) + (0.34,0)$) to[out=90,in=0] ($(30) + (0,0.34)$) to ($(37) + (0,0.34)$);

\filldraw[fill opacity=0,fill=white!70] 
($(46) + (0,0.34)$) to[out=180,in=90] ($(46) + (-0.34,0)$)  to[out=270,in=180] ($(46) + (0,-0.34)$)  to ($(48) + (0,-0.34)$) to[out=0,in=270] ($(48) + (0.34,0)$) to[out=90,in=0] ($(48) + (0,0.34)$) to ($(46) + (0,0.34)$);

\filldraw[fill opacity=0,fill=white!70] 
($(49) + (0,0.34)$) to[out=180,in=90] ($(49) + (-0.34,0)$)  to[out=270,in=180] ($(49) + (0,-0.34)$)  to ($(51) + (0,-0.34)$) to[out=0,in=270] ($(51) + (0.34,0)$) to[out=90,in=0] ($(51) + (0,0.34)$) to ($(49) + (0,0.34)$);

\filldraw[fill opacity=0,fill=white!70] 
($(52) + (0,0.34)$) to[out=180,in=90] ($(52) + (-0.34,0)$)  to[out=270,in=180] ($(52) + (0,-0.34)$)  to ($(54) + (0,-0.34)$) to[out=0,in=270] ($(54) + (0.34,0)$) to[out=90,in=0] ($(54) + (0,0.34)$) to ($(52) + (0,0.34)$);

\filldraw[fill opacity=0,fill=white!70] 
($(37) + (0.4,0)$) to[out=90,in=0] ($(37) + (0,0.4)$)  to[out=180,in=90] ($(37) + (-0.4,0)$)  to ($(43) + (-0.4,0)$) to[out=270,in=180] ($(43) + (0,-0.4)$) to[out=0,in=270] ($(43) + (0.4,0)$) to ($(37) + (0.4,0)$);

\filldraw[fill opacity=0,fill=white!70] 
($(38) + (0.4,0)$) to[out=90,in=0] ($(38) + (0,0.4)$)  to[out=180,in=90] ($(38) + (-0.4,0)$)  to ($(44) + (-0.4,0)$) to[out=270,in=180] ($(44) + (0,-0.4)$) to[out=0,in=270] ($(44) + (0.4,0)$) to ($(38) + (0.4,0)$);

\filldraw[fill opacity=0,fill=white!70] 
($(39) + (0.4,0)$) to[out=90,in=0] ($(39) + (0,0.4)$)  to[out=180,in=90] ($(39) + (-0.4,0)$)  to ($(45) + (-0.4,0)$) to[out=270,in=180] ($(45) + (0,-0.4)$) to[out=0,in=270] ($(45) + (0.4,0)$) to ($(39) + (0.4,0)$);

\filldraw[fill opacity=0.4,fill=gray!70] 
($(43) + (-0.2,-0.2)$) to[out=135,in=225] ($(43) + (-0.2,0.2)$)  to[out=45,in=135] ($(43) + (0.2,0.2)$)  to ($(54) + (0.2,0.2)$) to[out=305,in=45] ($(54) + (0.2,-0.2)$) to[out=225,in=305] ($(54) + (-0.2,-0.2)$) to ($(43) + (-0.2,-0.2)$);

\node at (7.5,2.6) {$b_1$};
\node at (4.2,1) {$a_1$};
\node at (6.3,1) {$a_2$};
\node at (8.7,1) {$a_3$};
\node at (10.2,0) {$a_4$};
\node at (10.7,1.7) {$b_2$};
\node at (8.3,3) {$a_5$};
\node at (8.3,4) {$a_6$};
\node at (10.2,7) {$a_7$};
\node at (7.5,4.4) {$b_3$};
\node at (8.7,6) {$a_8$};
\node at (6.3,6) {$a_9$};
\node at (4.8,7) {$a_{10}$};
\node at (4.4,5.4) {$b_4$};
\node at (6.7,4) {$a_{11}$};
\node at (6.7,3) {$a_{12}$};
\node at (3,7.7) {$b_5$};
\node at (2,7.7) {$a_{13}$};
\node at (1,7.7) {$a_{14}$};
\node at (-0.7,6) {$a_{15}$};
\node at (2.85,2.85) {$b_6$};
\node at (-0.7,4) {$a_{16}$};
\node at (-0.7,3) {$a_{17}$};
\node at (0.3,2) {$a_{18}$};
\node at (4,7.5) {$v$};
\end{tikzpicture}
\end{center}
\caption{An example of an $AB$-cycle-path with parameters $s=4$, $t=2$, with $k_A=3$ and $k_B=4$. 
The $B$-edges $b_1,\dots,b_6$ are shaded in grey.
Observe that $S:=\{a_1,\dots,a_{12},b_1,\dots,b_4\}$ forms an $AB$-cycle of length $s=4$
and $T:=\{a_{10},a_{13},\dots,a_{18},b_5,b_6\}$ forms an $AB$-path of length $t=2$,
where $V(S) \cap V(T)=a_{10}$, $E(S) \cap E(T)=\{a_{10}\}$ and 
the vertex $v:=a_{10} \cap b_5$ does not lie in any of the $B$-edges (i.e. $b_1,b_2,b_3, b_4$) of $S$.
}
\label{figAB3}
\end{figure}

\subsection{The deterministic and probabilistic lemmas}
The following two rather technical lemmas immediately  combine to ensure 
w.h.p. $H$ is empty.
\begin{lemma}[Deterministic lemma]\label{detclaim}
If $H$ is non-empty then it contains at least one of the following structures:
\begin{itemize}
    \item[(i)] An $AB$-path of length at least $\log n$.
    \item[(ii)] An $A$-path of length at least $ \log n$.
    \item[(iii)] Two $A$-edges that intersect in at least $2$ vertices.
    \item[(iv)] An $A$-cycle of length at most $1+\log n$.
    \item[(v)] 
    A $B$-edge $b$ with vertex set $v_1,\dots, v_{k_B}$, together with $A$-edges $a_1,\dots,a_{k_B}$ where $a_i \cap b =\{v_i\}$ (for 
    all $i\in[k_B]$) so that 
    \begin{itemize}
        \item there exist $a_i$ and $a_j$ that intersect and
        \item in the  edge order  $a_1,\dots,a_{k_B}$, 
 for  each $i\geq 2$, $a_i$ has at most one old vertex.
    \end{itemize}
    \item[(vi)] An $AB$-set $S$ together with an $A$-edge $e$ that intersects the $B$-edge $b$ of $S$ in at least $2$ vertices, but $e$ intersects each $A$-edge in $S$ in at most one vertex.
    \item[(vii)] An $AB$-set $S$ (consisting of a $B$-edge $b$, and $A$-edges
    $a_1,\dots, a_{k_B}$) and a collection of $A$-edges $e_1,\dots, e_s$ where
    \begin{itemize}
        \item $1\leq s \leq \log n$;
        \item $a_1,e_1,\dots, e_s,a_2$ forms an $A$-path that only
        intersects $b$ in the vertices $v_1:=a_1 \cap b$ and $v_2:=a_2 \cap b$;
        \item for each $i\geq 3$, $a_i$ intersects the $A$-path 
        $e_1,\dots, e_s$ in at most one vertex.
    \end{itemize}
    \item[(viii)] An $AB$-cycle-path $P$ with parameters $s,t \leq \log n$ together with 
    an $A$-edge $a$ such that $2 \leq |a \cap V(P)| \leq k_A-1$.
    \COMMENT{AT replaced $P$ with $V(P)$ here... and later on in paper too}
    \item[(ix)] An $AB$-cycle-path $P$ with parameters $s,t \leq \log n$
    together with an additional $B$-edge $b$ and additional $A$-edges $a_1,\dots,a_{q}$ (for some $0\leq q \leq k_B-1$) such that
    \begin{itemize}
        \item there exist $A$-edges $a_{q+1},\dots,a_{k_B}$ from $P$ so that $a_1,\dots,a_{k_B}$ together with $b$ form an $AB$-set;
        \item each $A$-edge $a_i$, $i \in [q]$, intersects $P$ in at most one vertex; 
        \item we have that the vertex $v:=b \cap a_{k_B}$ lies in no $B$-edge of $P$. 
        \COMMENT{RH: The intuition here will be that we are trying to extend the $AB$-cycle-path using the vertex
        $v=b \cap a_{k_B}$ but fail; either we intersect too many previous old vertices (condition on $q$ below) or there is a new $A$-edge
        which intersects an old $A$-edge (condition of $a_i$ intersecting $P$ below).
        I've not specified that $a_{k_B}$ should be at the end of the path... but this is okay, since it just means we have ruled more stuff!}
    \end{itemize}
    Further, at least one of the following holds:
    \begin{itemize}
        \item $s \geq 2$ and $q \leq k_B-2$;
        \item $s=0$ and $q \leq k_B-3$;
        \item there exists $i \in [q]$ such that $a_i$ intersects $P$.
        \end{itemize}
    \end{itemize}
\end{lemma}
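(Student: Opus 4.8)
The plan is to assume $H$ is non-empty and Rado minimal, and to run a greedy exploration process that attempts to build an ever-longer $AB$-cycle-path inside $H$; the claim will be that this process either goes on for $\log n$ steps (producing structure (i) or (ii), depending on whether it collects $B$-edges or gets stuck extending only via $A$-edges), or else it is blocked, and any blockage falls into one of the degenerate configurations (iii)--(ix). First I would record the consequence of Claim~\ref{cl:determ}: every vertex of every $A$-edge lies in a $B$-edge meeting that $A$-edge in a single vertex, and symmetrically. So starting from any $A$-edge $a_1$ of $H$, I can attach $B$-edges and further $A$-edges, always using Claim~\ref{cl:determ} at the ``frontier'' vertex to find the next edge to glue on. The natural invariant to maintain is that the partial structure built so far is (isomorphic to) an $AB$-cycle-path with parameters $s,t$, with a distinguished ``active'' vertex $v$ sitting on the last $A$-edge and in no $B$-edge of the structure, exactly matching the technical conditions in (AB3) and in (viii)/(ix).

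The heart of the argument is the case analysis of why the extension step can fail at a given stage. Suppose we have an $AB$-cycle-path $P$ (with $s,t\le \log n$, else we are done via (i) or by extracting an $A$-path for (ii)) and an active vertex $v$. By Claim~\ref{cl:determ} there is a $B$-edge $b$ through $v$ meeting the relevant $A$-edge only in $v$. If $b$ is internally disjoint from $P$ except at $v$, and each of the $k_B-1$ further $A$-edges hanging off $b$ (again obtained from Claim~\ref{cl:determ}) is new and internally disjoint from everything, then we have successfully extended $P$ and continue. A failure means one of the following: (a) two $A$-edges we are forced to use intersect each other or an old $A$-edge — giving (iii), (v), (vii), or the ``intersecting $a_i$'' clause of (ix); (b) a new $A$-edge meets an old part of $P$ in $\ge 2$ vertices — giving (viii); (c) a new $A$-edge meets $P$ in exactly one old vertex but this creates a short cycle — again (viii) or, combined with the $AB$-structure, (ix); (d) the new $B$-edge $b$ meets an $A$-edge of $P$ in $\ge 2$ vertices — giving (vi); (e) $b$ meets a $B$-edge of $P$, or the active vertex ends up lying in an already-present $B$-edge, which is precisely the ``cycle-closing'' situation — this is where the $AB$-cycle appears, and the parity-type side conditions ($s\ge 2$ and $q\le k_B-2$, etc.) in (ix) record exactly how the closure happens. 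One also has to handle the very first step separately (starting from a lone $A$-edge and trying to build the first $AB$-set), which is where (v) and the $s=0$ clauses come in.

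The key structural point making all of this work is that at each stage the partial object is forced to be one of the rigid hypergraphs defined in (A1)--(AB3) — there is no room for it to be anything else, precisely because Claim~\ref{cl:determ} pins down the frontier and minimality forces every $A$-edge/$B$-edge in $H$ to be ``used.'' So the bookkeeping reduces to: list all ways a single glueing step can create an unintended coincidence (an extra intersection, a repeated vertex, a short cycle, or a returning $B$-edge), and check that each such coincidence is literally one of (iii)--(ix), with the correct bounds on the parameters $s,t,q$. The main obstacle I anticipate is not any single deduction but getting this case split exhaustive and non-overlapping: in particular, correctly tracking the active vertex $v$ so that the ``$v$ lies in no $B$-edge of $P$'' condition is preserved, and verifying the three-way side condition in (ix) (distinguishing $s\ge 2$ from $s=0$ from the ``$a_i$ meets $P$'' case), since that is exactly the delicate point that governs whether the resulting density count in the subsequent probabilistic lemma is good enough. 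I would organise the write-up by first stating the invariant precisely, then handling the base case, then the generic extension step with its failure subcases in the order (iii), (iv), (v), (vi), (vii), (viii), (ix), citing Claim~\ref{cl:determ} at each frontier and citing the remark that all the hypergraphs in (A1)--(AB3) admit valid edge orders to know the process is well-defined.
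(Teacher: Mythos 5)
Your overall shape — a greedy extension process whose blockages are classified into (iii)--(ix) — does match the paper's strategy, but there is a genuine gap in how you handle $AB$-cycles, and it is not a minor bookkeeping issue. You describe starting the exploration from a lone $A$-edge and treating the ``cycle-closing'' event as one more blockage that is absorbed into structure (ix). This cannot work: if, starting from a lone $A$-edge, the next $B$-edge $b$ has exactly $k_B-2$ new vertices and the new $A$-edges hanging off it are all disjoint from what came before, the resulting structure is an $AB$-cycle of bounded length. That configuration is \emph{not} one of (iii)--(ix), and it cannot be added to the list, because the expected number of $AB$-cycles of fixed bounded length in $G$ is $\Theta(1)$ rather than $o(1)$ (the edge-and-vertex count makes the exponent in~(\ref{eq:exp2}) exactly zero); the probabilistic lemma would fail for it. This is precisely why (ix) with $s=0$ demands the strictly stronger condition ``$q\le k_B-3$ or some $a_i$ meets $P$.''

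The paper resolves this with a case split that you omit and that is essential: before running the exploration, one asks whether $H$ already contains an $AB$-cycle. If it does, one seeds the process with that cycle, so $s\ge 2$ throughout and the termination condition $q\le k_B-2$ is then genuinely covered by (ix). If it does not, one seeds with a lone $A$-edge, and then the problematic termination ($q=k_B-2$, no intersections) simply cannot occur — it would create an $AB$-cycle, contradicting the case hypothesis — so the stronger $s=0$ clause of (ix) is automatically met. Your description of failure subcase (e) conflates these two situations, and your single-process plan has an unaccounted-for terminal configuration. A secondary issue: you fold structures (ii)--(vii) into the extension step, but in the paper they are used to establish two preliminary structural claims (the $A$-edges form vertex-disjoint $A$-trees; the $k_B$ vertices of each $B$-edge lie in $k_B$ distinct $A$-trees), and it is those claims, not the iterative step, that guarantee the intermediate objects really are $AB$-cycle-paths in the rigid sense of (AB3). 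Without stating and proving them separately, the assertion that ``the partial object is forced to be one of the rigid hypergraphs in (A1)--(AB3)'' is unjustified.
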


\begin{lemma}[Probabilistic lemma]\label{proclaim}
W.h.p. $G$ (and therefore $H$) does not contain any of the structures described by (i)--(ix) in 
Lemma~\ref{detclaim}.
\end{lemma}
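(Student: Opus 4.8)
The plan is to prove Lemma~\ref{proclaim} by a first moment computation: for each structure type (i)--(ix) we bound the expected number of copies appearing in $G=G(n,p,A,B)$ and show that this expectation tends to $0$ (or, for the structures of unbounded size, that it is summable over the relevant size parameter). The key tool is a counting bound for hypergraphs with a valid edge order. Concretely, I would first prove a general claim: if $F$ is an auxiliary hypergraph built out of $\alpha$ many $A$-edges and $\beta$ many $B$-edges, and if $F$ admits a valid edge order, then the number of ``new'' vertices across all edges in that order is exactly $|V(F)|$, and hence the expected number of copies of $F$ in $G$ is at most
\begin{align*}
n^{|V(F)|}\, p^{\alpha(k_A-1)+\beta(k_B-1)+|V(F)|}\cdot O(1),
\end{align*}
because each $A$-edge contributes a factor $n^{(\text{new vertices in it})}p^{\text{new vertices in it}}$ once its solution structure is taken into account (a linear equation on $k$ variables with all but one variable chosen has $O(1)$ completions, and each of the $k$ vertices must lie in $[n]_p$), and similarly for $B$-edges. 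Plugging in $p=cn^{-(k_Ak_B-k_A-k_B)/(k_Ak_B-k_A)}$, which is exactly $cn^{-1/m(A,B)}$, one checks that each edge (weighted appropriately) contributes a factor that is $o(1)$ times a power of $c$; this is the content of the definitions of $m(A)$, $m(B)$ and $m(A,B)$, and it is where the precise arithmetic of the threshold enters.

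Next I would go through the nine cases. Structures (iii), (iv), (v), (vi), (vii), (viii) are ``bounded overlap'' configurations: in each, a prescribed set of edges is forced to share slightly more vertices than a generic disjoint union would, so the exponent of $n$ in the expected count drops strictly below what the exponent of $p$ can compensate, giving an expectation of the form $O(n^{-\delta}\,\mathrm{polylog}\,n)$ for some $\delta>0$ (the $\mathrm{polylog}$ coming from the $\log n$ bound on lengths in (iv), (vii), (viii)). For (viii) and (ix) I would first establish that an $AB$-cycle-path with parameters $s,t$ has a valid edge order in which the total deficiency (number of vertices below the disjoint-union count) is exactly $s+t+1$ or so, which makes its expected count a geometric-type series in $s+t$ that sums to $o(1)$; then attaching the extra $A$-edge or $B$-edge of (viii)/(ix) with its forced intersection contributes the extra factor $n^{-\delta'}$ needed. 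The long-path cases (i) and (ii) are handled by the observation that each additional link in an $AB$-path or $A$-path multiplies the expected count by a factor bounded by $(Cc)^{\text{something}}<1$ (using that $c$ is small), so a path of length $\geq \log n$ has expected count at most $(Cc)^{\log n}=n^{-\Theta(1)}\to 0$. Summing the $O(1)$ types plus the geometric series plus the $\log n$-length tails still gives $o(1)$.

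The main obstacle I anticipate is bookkeeping in cases (ix) (and to a lesser extent (v), (vii)): the structure is defined by a somewhat intricate list of alternatives (``$s\geq 2$ and $q\leq k_B-2$'', or ``$s=0$ and $q\leq k_B-3$'', or some $a_i$ meets $P$), and for the first-moment bound to close one must verify in \emph{each} branch that the number of forced coincidences is large enough relative to the number of new edges added, i.e. that the net exponent is strictly negative. This requires carefully identifying a valid edge order of the whole configuration and counting, for every edge in that order, how many of its vertices are new; the delicate point is that in the degenerate branches (e.g.\ $s=0$, or $|b_1\cap b_t|=1$ in an $AB$-cycle) one loses a unit of ``slack'' and must recover it from the extra hypothesis on $q$ or from the intersection of some $a_i$ with $P$. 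I would organise this by proving a single lemma that computes, for an $AB$-cycle-path with parameters $s,t$, both its vertex count and the ``excess'' $e(P)$ defined as (sum of edge sizes) $-$ $|V(P)|$, and then treat (viii), (ix) uniformly as ``$P$ plus one extra edge meeting $V(P)$ in a prescribed number of vertices'', so that the case analysis reduces to comparing two or three explicit linear expressions in $s,t,k_A,k_B$. Once that lemma is in place, every case becomes a routine (if tedious) inequality check, and Lemma~\ref{proclaim} follows by a union bound over the $O(\mathrm{poly}\,n)$ relevant choices of size parameters.
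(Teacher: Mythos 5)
Your overall plan is the same as the paper's: a first-moment argument driven by a counting bound for configurations with a valid edge order, followed by Markov's inequality for each of the nine structures. However, your master bound is mis-stated in a way that breaks the argument. The correct bound (the paper's Claim on copies of $S$) is $\mathbb{E}_G(S)\leq (k_B!)^{|E(S)|}\,n^{|V(S)|-|E(S)|}\,p^{|V(S)|}$: each edge contributes at most $O(1)\cdot n^{(\text{new vertices})-1}$ choices, because the \emph{last} variable of a linear equation is determined by the others (this is where the factor $n^{-1}$ per edge comes from), and the probability exponent is exactly $|V(S)|$, one factor of $p$ per vertex. Your formula $n^{|V(F)|}p^{\alpha(k_A-1)+\beta(k_B-1)+|V(F)|}$ puts the per-edge saving nowhere in the $n$-exponent and instead inflates the $p$-exponent beyond $|V(F)|$; since at this density $np^{k_A-1}\to 0$ and $np^{k_B-1}\to 0$, your expression is already \emph{smaller} than the true expectation for a single $A$-edge, so it is not a provable upper bound, and any case checks carried out with it are unsound. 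The parenthetical about ``$O(1)$ completions'' shows you know the right mechanism, but the bound you would actually have to work with is $c^{|V|-|E|}n^{\frac{k_B|V|-(k_Ak_B-k_A)|E|}{k_Ak_B-k_A}}$, and the sign of that $n$-exponent is exactly what each case must decide.

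The second gap is that the substance of the lemma --- the nine verifications --- is only sketched. For each structure you need explicit $|V|$ and $|E|$ counts (including the two types of $AB$-cycle, and the sum over all intersection patterns, which contributes the $\mathrm{polylog}$ factors), and for (ix) you need an actual valid edge order: the paper orders the cycle-path $P$ so that $a_{k_B}$ comes last, then reveals $b$, then the $a_i$ with $i\in[q]$, and needs a separate argument when $q=0$ (reveal $b$ just before $a_{k_B}$, using that $v=b\cap a_{k_B}$ lies in no other edge and $k_A\geq 3$). You flag precisely this branch analysis ($s\geq 2,\ q\leq k_B-2$; $s=0,\ q\leq k_B-3$; some $a_i$ meets $P$) as the anticipated obstacle but do not resolve it, and your proposed structural lemma (``deficiency exactly $s+t+1$ or so'') is not pinned down. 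Note also that a bare $AB$-cycle-path has $n$-exponent exactly $0$ at this $p$, so its expected count is $O(1)$ rather than a series summing to $o(1)$; the $o(1)$ in cases (viii)/(ix) comes entirely from the extra forced intersections, while in cases (i)/(ii) it comes from $c^{\log n}$ beating a fixed power of $n$ --- the one place where the full strength of $c$ small is used, which your geometric-decay remark does capture correctly.
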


In the next subsection we prove the deterministic lemma, followed by a proof of the 
probabilistic lemma, thereby completing the proof of Theorem~\ref{mainthm} in the case when $k_B>k_A$.
We note that the condition $k_B>k_A$ is needed within the proof of the probabilistic lemma, see calculations~(\ref{eq:prob1})--(\ref{eq:prob2}).

\subsection{Proof of Lemma~\ref{detclaim}}
Suppose for a contradiction that $H$ is non-empty but does not contain any of the
structures defined in (i)--(ix).
As there are no structures as in (ii), (iii) and (iv), this immediately implies the following.
\begin{claim}\label{cl:Atrees1}
The $A$-edges of $H$ form vertex-disjoint $A$-trees.
\end{claim}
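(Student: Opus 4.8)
The plan is as follows. First I would invoke the absence of structure (iii): any two $A$-edges of $H$ meet in at most one vertex, so the $A$-edges of $H$ form a linear hypergraph. Declare two $A$-edges equivalent if they are joined by a chain of pairwise-intersecting $A$-edges; the equivalence classes are then automatically pairwise vertex-disjoint (two $A$-edges sharing a vertex lie in the same class), so it suffices to show that each class $C$ spans an $A$-tree.

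To see this, I would greedily attempt to build an edge order exhibiting $C$ as an $A$-tree: fix an arbitrary $A$-edge $a_1\in C$, and repeatedly append a not-yet-listed $A$-edge of $C$ that has exactly one old vertex, whenever one exists. At every stage the listed edges form an $A$-tree $T$ — this is exactly the defining property in (A3) — and $T$ is connected (each appended edge meets the union of the earlier ones in its unique old vertex). If this process lists all of $C$, then $C$ is an $A$-tree and we are done.

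The only thing to rule out is that the process halts before $C$ is exhausted. In that case, since $C$ is connected, there is a not-yet-listed $A$-edge $a$ of $C$ (so $a\notin T$) meeting $V(T)$, and by the halting condition $a$ has at least two old vertices, i.e. $a$ contains two distinct vertices $x,y\in V(T)$. The key step is to extract from this a genuine $A$-cycle inside $H$: among all walks $g_0,g_1,\dots,g_m$ through edges of $T$ in which consecutive edges share a vertex and for which $g_0\cap a$ and $g_m\cap a$ are nonempty and distinct, choose one with $m$ minimal (such walks exist as $T$ is connected and $x,y\in V(T)$). Linearity gives $|g_i\cap g_{i+1}|=1$ for each $i$ and $|g_0\cap a|=|g_m\cap a|=1$, while minimality of $m$ forces that no edge is repeated, that $g_i\cap g_j=\emptyset$ whenever $|i-j|\ge 2$, and that $g_i\cap a=\emptyset$ for $0<i<m$ — in each of these cases a shared vertex would produce a shorter admissible walk. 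Hence $a,g_0,g_1,\dots,g_m$ is an $A$-cycle of length $m+2\ge 3$ in $H$, and deleting any one of its edges leaves an $A$-path of length $m+1$. If $m+2\le 1+\log n$ this $A$-cycle is a structure of type (iv); otherwise $m+2>1+\log n$, so $m+1>\log n$ and the $A$-path is a structure of type (ii). Either way we contradict our standing assumption that $H$ contains no structure (i)--(ix). Hence the greedy process never halts early, every equivalence class of $A$-edges spans an $A$-tree, and these $A$-trees are vertex-disjoint.

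I expect the cycle-extraction step to be the only delicate point: one has to check carefully, using the minimality of $m$, that the walk really satisfies the disjointness conditions in the definition (A2) of an $A$-cycle (and has no repeated edge, so its length is genuinely $m+2$), rather than being merely some closed walk through the $A$-edges. Everything else — the greedy construction and passing from a long $A$-cycle to a long $A$-path — is routine.
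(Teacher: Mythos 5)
Your argument is correct and follows exactly the route the paper intends: the paper dismisses this claim as an immediate consequence of the absence of structures (ii), (iii) and (iv), and your proposal simply fills in those details (linearity from (iii), greedy tree-building, and extraction of a genuine $A$-cycle that is either short, contradicting (iv), or yields a long $A$-path, contradicting (ii)). The cycle-extraction via a minimal connecting walk is carried out correctly, so no gap remains.
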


Next we prove the following claim.
\begin{claim}\label{cl:Btrees1}
For a $B$-edge $b$ of $H$, the vertices of $b$ are each in different $A$-trees.
\end{claim}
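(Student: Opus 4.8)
The plan is to argue by contradiction, using the Rado-minimality of $H$ together with Claim~\ref{cl:determ} and Claim~\ref{cl:Atrees1}. Suppose $b$ is a $B$-edge of $H$ with two vertices $u,w$ lying in the same $A$-tree $\mathcal T$. By Claim~\ref{cl:determ}, every vertex of $b$ lies in some $A$-edge of $H$; in particular $u$ lies in an $A$-edge $a_u$ and $w$ in an $A$-edge $a_w$, and since $H$ has no two $A$-edges sharing two vertices (no structure (iii)) and the $A$-edges form vertex-disjoint $A$-trees, $a_u$ and $a_w$ are both edges of $\mathcal T$. Because $\mathcal T$ is a tree (has a valid edge order with exactly one old vertex per edge), there is a unique minimal $A$-path inside $\mathcal T$ from $a_u$ to $a_w$, say $a_u = f_1, f_2, \dots, f_m = a_w$; note $m \leq \log n$ since otherwise we would have an $A$-path of length at least $\log n$, contradicting the absence of structure (ii).

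The heart of the argument is then to produce one of the forbidden structures (v), (vi) or (vii) from the configuration $b \cup \mathcal T$. First, I would collect the remaining $k_B-2$ vertices of $b$: by Claim~\ref{cl:determ} each lies in an $A$-edge of $H$, so we get $A$-edges $a_1,\dots,a_{k_B}$ (relabelling so $a_1 = a_u$, $a_2 = a_w$, and $a_i \cap b = \{v_i\}$). If some two of these $A$-edges intersect, I would aim to invoke structure (v): one orders the $A$-edges so that each has at most one old vertex, which is possible because distinct $A$-edges of $H$ meet in at most one vertex (no (iii)) and the $A$-edges form vertex-disjoint trees, so the "intersection graph" of the $a_i$'s is a forest and any forest admits such an order. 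If no two of the $a_i$'s intersect, then $b$ together with $a_1,\dots,a_{k_B}$ is genuinely an $AB$-set $S$, and we exploit the $A$-path $f_1,\dots,f_m$ connecting $a_1$ and $a_2$ through $\mathcal T$. In the case $m=1$ we have $a_1 = a_2$, so $a_1$ meets $b$ in both $v_1$ and $v_2$ — but that contradicts the no-(iii) assumption applied to... actually $a_1$ is a single $A$-edge meeting the $B$-edge $b$ in two vertices, which is precisely a degenerate case; I would handle this by noting it cannot happen since $v_1 \ne v_2$ forces $a_u \ne a_w$ when $k_B > k_A \geq 3$ and the trees are vertex-disjoint — more carefully, if $a_1$ contains two vertices of $b$ then $S' := (b, a_1, a_3, \dots)$ is close to (vi). In the case $m = 2$, $a_1$ and $a_2$ share a vertex, so again some two of the $a_i$'s intersect and we are in the (v) case. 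In the case $m \geq 3$, the internal edges $f_2,\dots,f_{m-1}$ of the path form the $A$-path $e_1,\dots,e_s$ (with $s = m-2 \leq \log n$) linking $a_1 = f_1$ to $a_2 = f_m$, and since this path lives in $\mathcal T$ which is vertex-disjoint from all other $A$-trees, it only meets $b$ in $v_1,v_2$, and the edges $a_3,\dots,a_{k_B}$ (lying in other trees or far in $\mathcal T$) meet $e_1,\dots,e_s$ in at most one vertex each — this is exactly structure (vii).

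The main obstacle I expect is the careful bookkeeping of degenerate overlaps: ensuring the $A$-path inside $\mathcal T$ from $a_u$ to $a_w$ really is disjoint from $b$ except at the two designated vertices, and that the path does not accidentally revisit a vertex of some $a_i$ in a way that violates the "at most one vertex" condition in (vii) — here vertex-disjointness of the $A$-trees (Claim~\ref{cl:Atrees1}) does most of the work, but one must also rule out $a_i$ (for $i \geq 3$) lying inside $\mathcal T$ and crossing the path, which again follows from the no-(iii) condition plus the tree structure, since inside a tree any $A$-edge meets a fixed $A$-path in a connected (hence single-vertex, given intersections of size $\leq 1$) piece. A secondary subtlety is the short cases $m \leq 2$, which must be folded into the "(v)" alternative rather than "(vii)"; I would simply split the proof into "some two of $a_1,\dots,a_{k_B}$ intersect" (giving (v) or (vi)) versus "they are pairwise disjoint" (giving (vii) via the connecting path of length $\geq 3$). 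Once these cases are exhausted we obtain a forbidden structure in every case, contradicting our assumption, which proves the claim.
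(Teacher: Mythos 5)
Your overall strategy is the same as the paper's: attach to each vertex $v_i$ of $b$ an $A$-edge $a_i$ with $a_i\cap b=\{v_i\}$ via Claim~\ref{cl:determ}, then use Claim~\ref{cl:Atrees1} to force one of the forbidden structures (v), (vi), (vii). However, there is a genuine gap in your ``pairwise disjoint'' branch. You assert that the connecting $A$-path $f_1,\dots,f_m$ from $a_1$ to $a_2$ meets $b$ only in $v_1,v_2$ ``since this path lives in $\mathcal{T}$ which is vertex-disjoint from all other $A$-trees''. That deduction is invalid: at this point of the contradiction argument nothing prevents further vertices of $b$ from lying in the same tree $\mathcal{T}$ -- that is exactly what the claim is trying to rule out -- so intermediate edges of the path may contain vertices of $b$, and the condition in (vii) that the path intersects $b$ only in $v_1$ and $v_2$ is not established. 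Worse, an intermediate $A$-edge could contain two or more vertices of $b$, in which case no re-choice of the pair or path yields (vii); the structure one must fall back on is (vi), an $AB$-set together with an $A$-edge meeting $b$ in at least two vertices. This is precisely the case your two-way split (``some two $a_i$ intersect'' versus ``pairwise disjoint'') omits. The paper runs a three-way split: its case (b) produces (vi) whenever some $A$-edge of $H$ meets $b$ in at least two vertices, and only in the remaining case (all $A$-edges meet $b$ at most once) does it build (vii), after choosing the pair of $b$-vertices and the connecting path so that the intermediate edges avoid $b$ (a minimality argument hidden in the paper's ``we may assume''). Your parenthetical appeals to (vi) do not cover this: you invoke it only for the $m=1$ degeneracy, which in fact cannot occur since Claim~\ref{cl:determ} already gives $a_i\cap b=\{v_i\}$ exactly, not for an extraneous $A$-edge hitting $b$ twice.

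A smaller inaccuracy: in the intersecting branch you justify the edge order required by (v) by claiming the intersection graph of $a_1,\dots,a_{k_B}$ is a forest. It need not be -- three $A$-edges through a common vertex give a triangle -- although the desired order still exists. The cleaner justification (the paper's) is that each $A$-tree carries a defining order in which every edge has exactly one old vertex; restricting that order to the chosen $a_i$'s within each tree and concatenating over the vertex-disjoint trees gives an order in which every $a_i$, $i\geq 2$, has at most one old vertex.
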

\proof
Note by Claim~\ref{cl:determ} every vertex in $b$ lies in its own $A$-edge; 
label the vertices of $b$ by $v_1,\dots,v_{k_B}$ and their respective $A$-edges $a_1,\dots,a_{k_B}$. 
Assume for a contradiction that Claim~\ref{cl:Btrees1} does not hold for $b$.
This implies that 
there is an $A$-tree in $H$ which contains at least two of the $A$-edges
$a_1,\dots,a_{k_B}$. 
We now split into three cases:
 (a) there exists $a_i$ and $a_j$ that intersect; (b)
 there is an $A$-edge $e_a$ that intersects $b$ in $s \geq 2$ vertices, but  the edges $a_1,\dots,a_{k_B}$ are pairwise disjoint; (c)
  all $A$-edges in $H$ intersect $b$  in at most one vertex and the edges $a_1,\dots,a_{k_B}$ are pairwise disjoint.
  
 We will show that in each case we get a contradiction (i.e., we obtain one of the structures defined in (i)--(ix)). 
 First suppose (a) holds.
 By Claim~\ref{cl:Atrees1} and by definition of an $A$-tree, there exists an edge order 
(w.l.o.g. we may assume this order is $a_1,\dots,a_{k_B}$) 
of the $A$-edges so that for  each $i\geq 2$, $a_i$ has at most one old vertex.
Then $b,a_1,\dots,a_{k_B}$ together form a structure as in (v), a contradiction.

Next suppose that (b) holds.
 In this case $b$ and $a_1,\dots,a_{k_B}$ together form an $AB$-set $S$.
 Further, by Claim~\ref{cl:Atrees1}, $|e_a \cap a_i| \leq 1$ for each $i \in [k_B]$.
 So $S$ together with $e_a$ forms a structure as in (vi), a contradiction.
 
Finally suppose that (c) holds. Again in this case $b$ and $a_1,\dots,a_{k_B}$ together form an $AB$-set $S$.
Since the trees of at least two of the $A$-edges $a_1,\dots, a_{k_B}$ intersect, we may assume  with loss of generality that $a_1$ and $a_2$ lie in the same $A$-tree. 
Then consider the $A$-path $a_1, e_1,\dots,e_s, a_2 $ between $a_1$ and $a_2$ 
on this $A$-tree (where $s \geq 1$). Note that we may assume that
this $A$-path does not contain any vertices 
from $b$ (except $v_1 \in a_1$ and $v_2 \in a_2$). As (ii) does not hold
we have $s \leq \log n$.
For each  $i \geq 3$, $a_i$ intersects
the 
path $e_1,\dots,e_s$ in at most one vertex (else we would have a contradiction to
Claim~\ref{cl:Atrees1}). 
The structure described is precisely as in (vii), a contradiction.
\endproof

We now split into two cases.
In both cases we will do an edge-revealing process for $H$, starting with a particular subgraph of $H$.

{\it Case 1: $H$ contains an $AB$-cycle.}
We will construct a subgraph $J$ of $H$ using the following algorithm: 
initially $J$ is an $AB$-cycle $C$ in $H$ 
(note that $C$ has length at most $\log n$, otherwise it would contain an $AB$-path of length at least $\log n$, contradicting (i)).
Pick an arbitrary $A$-edge $a_{0,k_B-1}$ from $C$, 
and pick from it a vertex $v_{0,k_B-1}$ which is not yet covered by a $B$-edge.
We now repeat the following step (the whole of the next paragraph) for $i=1,2,\dots$.

\smallskip
{\bf Iterative step:}
By Claim~\ref{cl:determ} there must be a $B$-edge, $b_i$ in $H$ which covers $v_{i-1,k_B-1}$. 
Further, for each of the $q$ new vertices $v_{i,1},\dots,v_{i,q}$ of $b_i$
(i.e. those vertices in $b_i$ not currently in $J$),
by Claims~\ref{cl:determ} and~\ref{cl:Btrees1} there are disjoint $A$-edges $a_{i,1},\dots,a_{i,q}$ in $H$
so that $b_i \cap a_{i,j} = v_{i,j}$ for all $j \in [q]$. 
Add $b_i$ and $a_{i,1},\dots,a_{i,q}$ to $J$.
We terminate the algorithm if one (or both) of the following holds:
\begin{itemize}
\item We have $q \leq k_B-2$.
\item There exists some $a_{i,j}$ which intersects a previous $A$-edge of $J$.
\end{itemize}
If neither of the above holds, 
we set $v_{i,k_B-1}$ to be a vertex from $a_{i,k_B-1}$ which is not yet covered by a $B$-edge,
in preparation for the next step $i+1$.

\smallskip

\COMMENT{RH: Note that if the $B$-edge satisfies $q \leq k_B-2$ we can't terminate the algorithm without the $A$-edges, 
as the calculation does not yield $o(1)$. It does once you include the $q$ new $A$-edges too.}

Note that  the process terminates after at most $ \log n$ steps since otherwise $J$ (and so $H$) contains an $AB$-path of length $\log n$ contradicting (i).
Suppose the process terminated at step $t\leq \log n$.\COMMENT{AT: rewrote} So
the edges $b_i,a_{i,j}$ for each $i \in [t-1]$, $j \in [k_B-1]$ together with $a_{0,k_B-1}$ form an $AB$-path $Q$ of length $t-1$.
The $AB$-cycle $C$ and the $AB$-path $Q$ together form an $AB$-cycle-path $P$.
If there exists $i \in [q]$ such that $2 \leq |a_{t,i} \cap V(P)| \leq k_A-1$, then $P \cup a_{t,i}$ forms a structure exactly as in (viii),
a contradiction; so since each $a_{t,i}$ has at least one vertex ($v_{t,i}$) not in $P$, 
we get that for each $i \in [q]$, $a_{t,i}$ intersects $P$ in at most one place. 

We will now show that $J$ is precisely as described in (ix) with $s \geq 2$, a contradiction:
We have that $C \cup Q$ plays the role of $P$.
Also $b_t$, $a_{t,1}, \dots, a_{t,q}$ play the roles of $b$, $a_1, \dots, a_q$ respectively,
and $a_{t-1,k_B-1}$ plays the role of $a_{k_B}$.
Observe that if $q \leq k_B-2$, then $B$ intersects $k_B-1-q$ more vertices from $P$ as well as a vertex from $a_{k_B}$, 
and by Claim~\ref{cl:Btrees1} these vertices lie in disjoint $A$-edges within $P$; 
these $A$-edges play the roles of $a_{q+1},\dots,a_{k_B-1}$.
By Claim~\ref{cl:Btrees1} the edges playing the roles of $b, a_1,\dots,a_{k_B}$ form an $AB$-set. 
The edges playing the roles of $a_i$, $i \in [q]$, each intersect $P$ in at most one place.
We have $b_t \cap a_{t-1,k-1}$ does not lie in a $B$-edge of $P$. 
(In particular, this is the vertex $v_{t-1,k_B-1}$ which we chose at the end of step $t-1$ which was not yet covered by a $B$-edge.)
We have that $P$ is an $AB$-cycle-path with parameters $s,t-1$ where $s \geq 2$.
Finally, the conditions under which we terminated the algorithm ensures that either $q \leq k_B-2$ or there exists $i \in [q]$ such that $a_{t,i}$ intersects $P$.

\smallskip

{\it Case 2: $H$ does not contain an $AB$-cycle.}
We will construct a subgraph $J$ of $H$ using the following algorithm: 
initially $J$ is a single $A$-edge $a_{0,k_B-1}$. 
Pick from it any vertex $v_{0,k_B-1}$. (Note it is not yet covered by a $B$-edge.)
We now repeat precisely the same {\bf iterative step} as in Case 1 for $i=1,2,\dots$.

As before the process terminates at some value $t \leq \log n$. 
Again the edges $b_i,a_{i,j}$ for each $i \in [t-1]$, $j \in [k_B-1]$ together with $a_{0,k_B-1}$ form an $AB$-path $Q$ of length $t-1$.
Note that it cannot be the case that we terminated the algorithm with $b_t$ having $q=k_B-2$ new vertices
and also no $a_{t,j}$ intersecting a previous $A$-edge of $J$, since then $J$ would contain an $AB$-cycle, which contradicts the assumption of the case.  
If there exists $i \in [q]$ such that $2 \leq |a_{t,i} \cap Q| \leq k_A-1$, then $Q \cup a_{t,i}$ forms a structure exactly as in (viii) with $s=0$,
a contradiction; so since each $a_{t,i}$ has at least one vertex ($v_{t,i}$) not in $Q$, 
we get that each $a_{t,i}$, $i \in [q]$, intersects $Q$ in at most one place. 

One can now show that $J$ is precisely as described in (ix) with $s =0$, 
a contradiction:
We have that the $AB$-path $Q$ plays the role of the $AB$-cycle-path $P$;
$b_t$, $a_{t,1}, \dots, a_{t,q}$ play the roles of $b$, $a_1, \dots, a_q$ respectively;
$a_{t-1,k_B-1}$ plays the role of $a_{k_B}$;
if $q \leq k_B-2$, then the other $A$-edges which $b$ intersects from somewhere within $Q$ play the roles of $a_{q+1},\dots,a_{k_B-1}$.
The conditions of (ix) can now be checked and shown to follow almost identically to the previous case.

\smallskip

Since both cases yielded a contradiction, this completes the proof. 
\qed

\subsection{Proof of Lemma~\ref{proclaim}}

Let $K$ be the hypergraph with vertex set $[n]$,
whose edge set consists of those $k_A$-sets that
correspond to a $k_A$-distinct solution to $A$ in $[n]$ and those $k_B$-sets that correspond to a
$k_B$-distinct solution to $B$ in $[n]$. Note that both $H$ and $G$ are subhypergraphs of $K$.

\begin{claim}\label{cl:copiesofS}
Let $S$ be a subhypergraph of $K$ with a valid edge order.
Then $K$ contains at most $(k_B!)^{|E(S)|} n^{|V(S)|-|E(S)|}$ copies of $S$.
\end{claim}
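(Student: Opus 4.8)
The plan is to bound the number of copies of $S$ in $K$ by encoding each copy as a sequence of choices made while processing the edges of $S$ one at a time in a valid order, and showing that the total number of such sequences is at most $(k_B!)^{|E(S)|} n^{|V(S)|-|E(S)|}$.

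First I would fix a valid edge order $e_1, \dots, e_t$ of $E(S)$, which exists by hypothesis; here $t = |E(S)|$. The key structural fact, used throughout, is that a $k_A$-edge or $k_B$-edge of $K$ is determined by any $k_A - 1$ (respectively $k_B - 1$) of its vertices together with the knowledge of which solution-slot each vertex occupies: since $A$ is the equation $a_1 x_1 + \dots + a_{k_A} x_{k_A} = c$ with all $a_i \ne 0$, specifying the values in positions $2, \dots, k_A$ forces the value in position $1$, and similarly for $B$. I would then build up a copy of $S$ in $K$ by revealing the images of $e_1, e_2, \dots, e_t$ in turn, counting at each stage the number of ways to extend. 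For $e_1$: it has $|e_1|$ new vertices (all of them), but because $e_1$ is a solution its vertices are not free — once we know which of the $|e_1|!$ orderings assigns vertices to solution-slots, the remaining freedom is choosing $|e_1| - 1$ values in $[n]$ (the last is then forced), so at most $|e_1|! \cdot n^{|e_1| - 1} \le k_B! \cdot n^{|e_1| - 1}$ choices. For each subsequent $e_i$ ($i \ge 2$): by validity $e_i$ has at least one new vertex, say $n_i \ge 1$ new vertices and $|e_i| - n_i$ old vertices whose images are already fixed; we pick the assignment of $e_i$'s vertices to its solution-slots (at most $|e_i|! \le k_B!$ ways), and then we must choose the images of the $n_i$ new vertices subject to the linear equation. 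Since at least one slot is occupied by a new vertex, we may use the equation to express one new vertex's value in terms of the others and the (fixed) old ones, leaving at most $n_i - 1$ free choices from $[n]$ — hence at most $k_B! \cdot n^{n_i - 1}$ choices for $e_i$.

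Multiplying over all $t$ edges, the number of embeddings is at most
\[
\prod_{i=1}^{t} k_B! \cdot n^{n_i - 1} = (k_B!)^{t} \, n^{\left(\sum_{i=1}^t n_i\right) - t}.
\]
Since every vertex of $S$ is new in exactly one edge, $\sum_{i=1}^t n_i = |V(S)|$, so this equals $(k_B!)^{|E(S)|} n^{|V(S)| - |E(S)|}$, as required. (An embedding may overcount a copy by a bounded factor coming from automorphisms, but since we are proving an upper bound on the number of copies this only helps; alternatively one notes the count of embeddings already dominates the count of copies.)

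The main subtlety — really the only place care is needed — is the claim that in each step we genuinely save a full factor of $n$, i.e. that when $e_i$ has $n_i$ new vertices we have only $n_i - 1$ free choices rather than $n_i$. This rests on the fact that the equation has a \emph{nonzero} coefficient in the slot occupied by a chosen new vertex, so that vertex's value is determined by the others; this is exactly why we need the $A$- and $B$-edges to be genuine solutions of equations with all coefficients nonzero, and it is where the $|e_i|!$ factor (to fix the slot-assignment before solving) enters. I would also remark that for $e_1$ the "old vertices" set is empty, so $n_1 = |e_1|$ and the bound $k_B! \, n^{|e_1|-1}$ is the $i=1$ instance of the general estimate, making the induction uniform.
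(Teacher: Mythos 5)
Your proof is correct and follows essentially the same route as the paper: process the edges in the valid order, pay a factor of at most $k_B!$ per edge for the assignment of vertices to variable slots, and a factor of $n$ per new vertex except one, whose value is forced by the equation (all coefficients being nonzero). The only cosmetic difference is that the paper packages the per-edge estimate as a bound on the number of edges containing a fixed vertex set $Q$ and then applies it $|E(S)|$ times, while you count new vertices edge by edge directly; the arithmetic $\sum_i n_i = |V(S)|$ is the same in both.
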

\proof
Consider any fixed set $Q$ of $q<k$ vertices in $K$ (where $k=k_A$ or $k=k_B$).
Let $Z$ denote the number of edges of size $k$ in $K$ that contain $Q$.
Such an edge represents a solution $x=(x_1,\dots,x_{k_A})$ to $A$ (or a solution $y=(y_1,\dots,y_{k_B})$ to $B$),
where $q$ of the $x_i$ (or $y_i$) have already been chosen. It is straightforward to upper bound $Z$:
There are at most $k_B!$ choices for which of the variables the elements of $Q$ play the role of.
Once the role of the vertices in $Q$ are fixed,
there are at most $n$ choices for any of the other variables in the solution to $A$ (or $B$).
Moreover, 
since $A$ and $B$ are linear equations, once we have selected 
all but one vertex of an edge, 
the element corresponding to this last vertex is immediately determined.
Thus 
\begin{align}\label{eq:edgechoose}
Z \leq k_B! \cdot n^{k-q-1}.
\end{align}

One can construct a copy of $S$ in $K$ 
by going through the edges in the order given by the valid edge order.
We note that, at any stage of the process, it is the case that for an edge of size $k$, there are  $q<k$ vertices assigned elements already, for some $q \geq 0$. Thus we may repeatedly apply the inequality~(\ref{eq:edgechoose}) to bound the number of choices for each edge.
The bound on the number of copies of $S$ immediately follows. In particular, note we apply 
(\ref{eq:edgechoose}) $|E(S)|$ times.
\endproof
Let $S$ be a hypergraph with a valid edge order.
Write $\mathbb{E}_G(S)$ for the expected number of copies of  $S$ in $G$. 
By the previous claim, and the definition of $G(\subseteq K)$, we have that
 $\mathbb{E}_G(S)\leq (k_B!)^{|E(S)|} n^{|V(S)|-|E(S)|} p^{|V(S)|}$.
Thus, by definition of $p$ we obtain
\begin{align}\label{eq:exp}
\mathbb{E}_G(S)
\leq (k_B!)^{|E(S)|} c^{|V(S)|} n^{|V(S)|-|E(S)|-\frac{|V(S)| \cdot (k_A k_B - k_A - k_B)}{k_A k_B - k_A}} 
\leq c^{|V(S)|-|E(S)|} n^{\frac{ k_B \cdot |V(S)| - (k_A k_B - k_A) \cdot |E(S)|}{k_A k_B - k_A}},
\end{align}
where the last inequality follows since $c $ is sufficiently small compared to $1/k_B$.
Note that as $S$ has a valid edge order, $|V(S)|-|E(S)| \geq 0$ and so
\begin{align}\label{eq:exp2}
\mathbb{E}_G(S) \leq n^{\frac{ k_B \cdot |V(S)| - (k_A k_B - k_A) \cdot |E(S)|}{k_A k_B - k_A}}.
\end{align}

\medskip

Our aim now is to show that the expected number of copies of each structure (i)--(ix)
in $G$ is $o(1)$. Then by repeated applications of Markov's inequality we conclude that the lemma holds.

\smallskip

{\it Case (i)}
Fix $s:=\lceil \log n \rceil$.
Let $Q_s$ denote the $AB$-path of length $s$. 
Recall that there exists a valid edge order for  $Q_s$. 
Further $|V(Q_s)|=s(k_A (k_B-1))+k_A$ and $|E(Q_s)|=s k_B +1$. We obtain via~(\ref{eq:exp}) that
\begin{align}
\mathbb{E}_G(Q_s)= c^{s(k_A k_B -k_A-k_B)+k_A-1} n^{\frac{k_A}{k_Ak_B-k_A}}  \leq c^{\log n} n^2 =o(1),
\end{align}
where the last equality follows since $c $ is sufficiently small compared to $1/k_A$ and $ 1/k_B$. 
Thus, Markov's inequality implies that w.h.p. $G$ does not contain an $AB$-path
of length at least $\log n$.\footnote{Notice if we had chosen $p=Cn^{-\frac{k_Ak_B-k_A-k_B}{k_Ak_B-k_A}}$ for  $C>1$, the argument here would not work. This is the only part of the proof that we use the full force of our bound on
$p$.}

\smallskip

{\it Case (ii)}
As before set $s:=\lceil \log n \rceil$.
Let $P_s$ denote the $A$-path of length $s$. 
There exists a valid edge order for  $P_s$. 
Further $|V(P_s)|=s(k_A-1)+1$ and $|E(P_s)|=s$. We obtain via~(\ref{eq:exp}) that
\begin{align}
\mathbb{E}_G(P_s)\leq c^{s(k_A-2)+1} n^{\frac{k_B-s(k_B-k_A)}{k_Ak_B-k_A}} \leq c^{\log n} n=o(1),
\end{align}
where the last equality follows since $c $ is sufficiently small compared to $1/k_A$ and $1/k_B$.
Thus, Markov's inequality implies that w.h.p. $G$ does not contain an $A$-path
of length at least $\log n$.

\smallskip

{\it Case (iii)}
Let $T_s$ be the hypergraph consisting of $A$-edges $a$ and $a'$ with $|a \cap a'| = s\geq 2$ and let $X$ denote the total number of 
copies of $T_s$ in $G$ with $2 \leq s \leq k_A-1$. Clearly $a,a'$ is a valid edge order for  $T_s$. 
Note $|V(T_s)|=2k_A-s$ and $|E(T_s)|=2$, and so we obtain via~(\ref{eq:exp2}) that
\begin{align}\label{eq:prob1}
\mathbb{E}(X)= \sum_{s=2}^{k_A-1} \mathbb{E}_G(T_s) \leq \sum_{s=2}^{k_A-1} n^\frac{2k_A-s k_B}{k_Ak_B-k_A} \leq k_A \cdot n^\frac{-2(k_B-k_A)}{k_A k_B-k_A} = o(1).
\end{align}
Thus, Markov's inequality implies that w.h.p. $G$ does not contain any pair of $A$-edges that intersect in at least $2$ vertices.

\smallskip

{\it Case (iv)}
Let $C_s$ denote the $A$-cycle of length $s$; 
let $Y$ denote the number of copies of $C_s$ in $G$ with $3 \leq s \leq 1+ \log n$. Since $k_A \geq 3$ and 
each edge intersects at most two other edges in at most one vertex, $C_s$ has a valid edge order. 
We note $|V(C_s)|=s(k_A-1)$ and $|E(C_s)|=s$, and so we obtain via~(\ref{eq:exp2}) that
\begin{align}
\mathbb{E}(Y)= \sum_{s=3}^{1+ \log n} \mathbb{E}(C_s) \leq \sum_{s=3}^{1+ \log n} n^{\frac{-s(k_B-k_A)}{k_Ak_B-k_A}} \leq \log n \cdot n^\frac{-(k_B-k_A)}{k_Ak_B-k_A}= o(1).
\end{align}
Thus, by Markov's inequality we conclude that w.h.p. there does not exists an
$A$-cycle in $G$ of length at most $1+\log n$.

\smallskip

{\it Case (v)}
Consider a structure $S$ as in Lemma~\ref{detclaim}(v).
In the  edge order  $a_1,\dots,a_{k_B}$, 
 for  each $i\geq 2$, $a_i$ has at most one old vertex;
 write $x_i \in \{0,1\}$ for the number of vertices that $a_i$ intersects in
 $a_1,\dots, a_{i-1}$.
Let $x:=\sum x_i$ and note $x \geq 1$ since there exists some $a_i$ and $a_j$ that intersect. 
The edge order $b,a_1,\dots,a_{k_B}$ is clearly valid; there are $k_Bk_A-x$ vertices and $k_B+1$ edges  in this structure. 
 
Running over all choices of $x_i$ and all possible places for a given $A$-edge to intersect a previous $A$-edge, (\ref{eq:exp2}) implies that
the total expected number of copies of such hypergraphs $S$ in $G$ is at most
\begin{align}
\sum_{x=1}^{k_A-1} (k_A k_B)^x n^{\frac{k_A- xk_B}{k_Ak_B-k_A}} \leq (k_A k_B)^{k_A} n^\frac{-(k_B-k_A)}{k_A k_B- k_A}=o(1).
\end{align}
Therefore, Markov's inequality implies that w.h.p. no such structure exists in $G$.

\smallskip

{\it Case (vi)} Consider a structure $T$ as in Lemma~\ref{detclaim}(vi).
So $T$ consists of an $AB$-set $S$ 
(containing a $B$-edge $b$ and 
$A$-edges $a_1,\dots, a_{k_B}$) and 
an $A$-edge $e$ that intersects $b$
in at least $2$ vertices but 
each edge $a_1,\dots, a_{k_B}$ in 
at most one vertex.
Write $v_1,\dots, v_{k_B}$ for the vertices in $b$, where $v_i=b \cap a_i$.

We may assume $e \cap b= \{v_1,\dots,v_s\}$ where $s \geq 2$.
We may further assume that there is a non-negative integer $t\leq k_A-s$ so that 
$|e \cap a_{s+i}|=1$ for $i \in [t]$, and $|e \cap a_{s+i}|=0$ for $t<i\leq k_A-s$. 
(That is, $t$ encodes the number of $A$-edges from $a_1,\dots,a_{k_B}$ that $e$ intersects outside $b$.)
Note that $T$ is uniquely defined for a fixed  $s$ and $t$; so we write it as $T_{s,t}$.
 
Let $W$ denote the number of copies of all such structures $T_{s,t}$ in $G$ with $2 \leq s \leq k_A$
and $0 \leq t \leq k_A-s$. 
We note that $e_a,b,a_1,\dots,a_{k_B}$ is a valid edge order, $|V(T_{s,t})|=k_Bk_A+(k_A-s-t)$ and 
$|E(T_{s,t})|=k_B+2$. By applying~(\ref{eq:exp2}) we obtain that
\begin{align}
\mathbb{E}(W) = \sum_{s=2}^{k_A} \sum_{t=0}^{k_A-s} \mathbb{E}_G(T_{s,t}) \leq  \sum_{s=2}^{k_A} \sum_{t=0}^{k_A-s} n^{\frac{2k_A-(s+t)k_B}{k_Ak_B-k_A}} \leq k_A^2 \cdot n^\frac{-2(k_B-k_A)}{k_Ak_B-k_A}= o(1).
\end{align}
Therefore, Markov's inequality implies that w.h.p.  no structure as in (vi) occurs in $G$.

\smallskip

{\it Case (vii)}
Consider a structure $T$ as in
Lemma~\ref{detclaim}(vii).
So in particular, $a_1,e_1,\dots,
e_s,a_2$ is an $A$-path in $T$ where
$1\leq s \leq \log n$. Further.
for each  $i \geq 3$, $a_i$ intersects
the 
path $e_1,\dots,e_s$ in $ x_i \in \{0, 1\}$ vertices. Let $x:=\sum x_i$. 

The edge order  $b,a_1,e_1,\dots,e_s,a_2,a_3,\dots,a_{k_B}$ 
is clearly valid; in this structure there are $k_Bk_A-1-x+s(k_A-1)$ vertices, and $k_B+s+1$ edges. 
Thus running over all choices of $s$ and the $x_i$, and all possible places for a given $A$-edge to intersect a previous $A$-edge, (\ref{eq:exp2}) implies that
the total expected number of copies of such structures in $G$ is at most
\begin{align}
\sum_{s=1}^{\log n} \sum_{x=0}^{k_A-2} (s k_A)^x  n^{\frac{-(s+1)(k_B-k_A)-xk_B}{k_Ak_B-k_A}} \leq (\log n \cdot k_A)^{k_A} n^\frac{-2(k_B-k_A)}{k_A k_B-k_A}=o(1).
\end{align}
Therefore, Markov's inequality implies that w.h.p. $G$ does not contain any structure as in (vii).

\smallskip

{\it Case (viii)}
Consider a structure $S$ as in Lemma~\ref{detclaim}(viii). 
Since $x=|a \cap V(P)| \leq k_A-1$ the valid edge order for $P$ followed by $a$ is a valid edge order for $S$.

If $s \geq 2$ then this structure has $(s+t)(k_B-1)k_A+k_A-x$ vertices and $(s+t)k_B+1$ edges.
If $s=0$ then this structure has $k_A+t(k_B-1)k_A+k_A-x$ vertices and $tk_B+2$ edges.
Running over all choices of $s,t,x$,
 all possible places for where $a$ could intersect $P$, 
 and all possible places in the $AB$-cycle for the $AB$-path in $P$ to start from,
~(\ref{eq:exp2}) implies that the total expected number of copies of such hypergraphs in $G$ is at most
\begin{align}
& \sum_{s=2}^{\log n} \sum_{t=0}^{\log n} 
\sum_{x=2}^{k_A-1}
2(2 k_A k_B \log n)^{x+1} 
n^{\frac{k_A-xk_B}{k_A k_B - k_A}} 
+ 
\sum_{t=1}^{\log n} 
\sum_{x=2}^{k_A-1}
(2 k_A k_B \log n)^{x+1} 
n^{\frac{2k_A-xk_B}{k_A k_B -k_A}} \nonumber \\
\leq & \polylog(n) \cdot n^{\frac{-1}{k_A k_B -k_A}}
= o(1).
\end{align}
In particular, notice we multiply by $2$ in the first summation as recall that
there are $2$ different $AB$-cycles of a fixed size.\COMMENT{AT added and also changed some number in the inequality above}
 Markov's inequality implies that w.h.p. $G$ does not contain any structure as in (viii).
\smallskip

{\it Case (ix)}
Consider a structure $S$ as in Lemma~\ref{detclaim}(ix). 
We first show that $S$ has a valid edge order.\COMMENT{AT a number of small
changes scattered around this case}
Since the vertex $v:=b \cap a_{k_B}$ lies in no $B$-edge of $P$, there is a valid edge order of $P$ where $a_{k_B}$ is last. 
Use this order, then $b$, then $a_i$, $i \in [q]$. 
Since each of these $a_i$ intersect $b \cup P$ in at most two places and $k_A \geq 3$, 
this is valid edge order, unless if $q=0$.
In this case, take the same order, except reveal $b$ immediately before $a_{k_B}$. 
Since the vertex $v=b \cap a_{k_B}$ does not lie in any other $B$-edge (or $A$-edge by definition), 
$v$ is new in $b$. Further since $a_{k_B}$ previously had $k_A-1 \geq 2$ new vertices, it still has a new vertex in this edge order,
and thus this edge order is valid.

For each $i \in [q]$, let $x_i:=|a_i \cap V(P)|$ and note $x_i \in \{0,1\}$. Let $x:= \sum x_i$. 
We may assume $x_i=1$ for each $i \leq x$ and $x_i=0$ for $i \geq x+1$. 

Consider the case where $s \geq 2$. We have at least one of $q \leq k_B-2$ or $x \geq 1$.
The number of vertices in this structure is $(s+t)(k_B-1)k_A+qk_A-x$.
The number of edges in this structure is $(s+t)k_B+1+q$.
Running over all choices of $s,t,x,q$, 
all possible places for a given $A$-edge $a_i$, $i \leq x$, to intersect a previous $A$-edge, 
all possible choices of $k_B-q$ vertices from $P$ for $b$ to intersect
and all possible places in the $AB$-cycle for the $AB$-path to start from\COMMENT{RH: think this is needed as defined/ might as well be careful! AT good point... I've tweaked (viii) to reflect this too},
~(\ref{eq:exp2}) implies that the total expected number of copies of such hypergraphs in $G$ is at most
\begin{align}
& \sum_{s=2}^{\log n} \sum_{t=0}^{\log n} \left( \sum_{q=0}^{k_B-2} \sum_{x=0}^{q} + \sum_{q=k_B-1} \sum_{x=1}^{q} \right)
2(2 k _A k_B \log n)^{x+k_B-q+1} n^{\frac{(q+1-k_B) k_A - xk_B}{k_A k_B - k_A}} \nonumber \\
\leq & \polylog(n) \cdot n^{\frac{-1}{k_A k_B -k_A}}
= o(1).
\end{align}
\COMMENT{RH: Have this sum with only one term $q=k_B-1$ in it... 
we want to sum over $0 \leq q \leq k_B-1$ and $0 \leq x \leq q$, but exclude the case where $q=k_B-1$ and $x=0$.
I thought this was the cleanest way to write it, but let me know if you have any other suggestions? AT: looks fine. }

Now consider the case where $s=0$. We have at least one of $q \leq k-3$ or $x \geq 1$.
The number of vertices in this structure is $k_A+t(k_B-1)k_A+qk_A-x$.
The number of edges in this structure is $1+tk_B+1+q$.
Again running over all choices of $t,x,q$, 
all possible places for a given $A$-edge $a_i$, $i \leq x$, to intersect a previous $A$-edge,
all possible choices of $k_B-q$ vertices from $P$ for $b$ to intersect,
~(\ref{eq:exp2}) implies that the total expected number of copies of such hypergraphs in $G$ is at most
\begin{align}\label{eq:prob2}
& \sum_{t=1}^{\log n} \left( \sum_{q=0}^{k_B-3} \sum_{x=0}^{q} + \sum_{q=k_B-2}^{k_B-1} \sum_{x=1}^{q} \right)
(2 k_A k_B \log n)^{x+k_B-q} n^{\frac{(q+2-k_B) k_A - xk_B}{k_A k_B - k_A}} \nonumber \\
\leq & \polylog(n) \cdot n^{\frac{-1}{k_A k_B -k_A}}
= o(1).
\end{align}
Therefore, by Markov's inequality implies w.h.p. no such structures $S$ (with $s \geq 2$ or $s=0$) exist in $G$.
\qed

\section{Proof of Theorem~\ref{mainthm2} and the $k_A=k_B$ case of Theorem~\ref{mainthm}}\label{sec:mainproof2}

\subsection{Overview of the argument in~\cite{random4}}\label{31}
The original proof of Theorem~\ref{radores0} considers
 an analogous hypergraph $G$ to that considered in Theorem~\ref{mainthm},
 and its minimal Rado subgraph $H$.
 That is, $G$  has vertex set $[n]_p$ and edges corresponding to $k$-distinct solutions to $Ax=0$. 
 
 If $[n]_p$ is $(A,r)$-Rado then it is shown that $H$ contains a so-called \emph{spoiled simple path} or a \emph{fairly simple cycle with a handle} (see  Section~\ref{33} for these definitions).
 It is then shown that w.h.p. $G$ (and therefore $H$) has neither of these structures.
 However, the argument given in~\cite{random4} misses a case in which  neither
 of these structures has been proven to be present. To close this gap, we show that $H$ must contain at least one of these two original structures, or one of four  other structures (which we define below). We then show that w.h.p. $G$ has none of these six structures.

\subsection{A unifying theorem}\label{32}
As mentioned in the introduction, we prove Theorem~\ref{mainthm2} for a more general class of systems of linear equations. 
Let $(*)$ be the following matrix property: 
\begin{itemize}
\item[$(*)$] Under Gaussian elimination the matrix does not have any row which consists of precisely two non-zero rational entries. 
\end{itemize}
Suppose $A$ and $B$ are irredundant matrices that satisfy $(*)$.
Then Proposition 4.3(iv)-(v) in~\cite{hst} implies that 
the definitions of $m(A)$ and  $m(A,B)$  are well-defined (i.e. have positive denominator);
Proposition~12 in~\cite{hancock} implies that $C(A)$ is also well-defined and 
 satisfies $(*)$ itself.

\begin{thm}\label{mainthm2general}
Let $k, \ell$ be positive integers such that $k \geq \ell+2$.
Then there exists a constant $c>0$ such that the following holds.
Let $A$ and $B$ be systems of linear equations for which both of their underlying matrices are irredundant and satisfy $(*)$,
and their cores $C(A)$ and $C(B)$ are both of dimension $\ell \times k$. 
If $$p \leq cn^{-1/m(A,B)}=cn^{-\frac{k-\ell-1}{k-1}}$$ then  
$
\lim_{n \to \infty} \mathbb{P}[ [n]_p \text{ is  $(A,B)$-Rado}]=0.
$
\end{thm}
\COMMENT{RH: as I mentioned in an earlier email: 
For linear equations with $k \geq 3$ we observed that the random question did make sense since $\mathbb{N}$ is $(A,B)$-Rado.
However this is not true necessarily for matrices which satisfy $(*)$. 
(In particular, this is exactly what me and Christoph Spiegel had looked at and spent a bit of time on: we found a class of $2 \times 4$ matrices which satisfy $(*)$ which we could find a 2-colouring of $\mathbb{N}$ for which there are no monochromatic solutions...)}

Note that the class of matrices which are irredundant and partition regular is a subclass of the matrices which are irredundant and satisfy $(*)$ 
(as noted in Section 4.1 of~\cite{hst}), and so Theorem~\ref{mainthm2general} is indeed a generalisation of Theorem~\ref{mainthm2}. 
Also note that the underlying matrix of a linear equation of length $k \geq 3$ satisfies $(*)$, 
so Theorem~\ref{mainthm2general} covers the case of $k_A=k_B$ of Theorem~\ref{mainthm}.

\subsection{Proof of Theorem~\ref{mainthm2general}}\label{33}
Suppose that $A$ and  $B$ are as in the statement of the theorem. 
Let $c>0$ be a constant sufficiently small compared to $1/k$.
(So the choice of $c$ depends on $k$ only, and not on $A$ and $B$.)
It suffices to prove the theorem in the case when 
$p=cn^{-\frac{k-\ell-1}{k-1}}$.

Suppose $A$ and $B$ have dimensions $\ell_A \times k_A$ and $\ell_B \times k_B$ respectively.
By Proposition~12 in~\cite{hancock}, 
there exists vectors $a',b'$ such that 
every $k_A$-distinct solution $x=(x_1,\dots,x_{k_A})$ to $A$ contains as an ordered subvector $x'=(x_{i_1},\dots, x_{i_k})$ (where $i_1<...<i_k$), a $k$-distinct solution to $C(A)x=a'$ and also
every $k_B$-distinct solution $y=(x_1,\dots,x_{k_B})$ to $B$ contains as an ordered subvector $y'=(y_{j_1},\dots, y_{j_k})$ (where $j_1<...<j_k$), a $k$-distinct solution to $C(B)x=b'$. 
Write $A'$ for $C(A)x=a'$ and $B'$ for $C(B)x=b'$.
We consider the associated hypergraph $G=G(n,p,A',B')$ which is defined as in the proof of Theorem~\ref{mainthm}.
Note that if $[n]_p$ does not contain any red $k$-distinct solutions to $A'$ 
then it does not contain any red $k_A$-distinct solutions to $A$ by definition.
Similarly $[n]_p$ not containing any blue $k$-distinct solutions to $B'$ 
in turn implies it does not contain any blue $k_B$-distinct solutions to $B$.
Thus it suffices to show that w.h.p. $G$ is not Rado.

If $G$ is Rado, fix a Rado minimal subgraph $H$ of $G$. Otherwise set $H:=\emptyset$. 
So it suffices to prove that w.h.p. $H=\emptyset$. 

First note that Claim~\ref{cl:determ} holds as before (with $A'$ and $B'$ playing the
roles of $A$ and $B$ respectively).\COMMENT{AT added}
As in the proof of Theorem~\ref{mainthm}, we define some hypergraph notation, 
then prove the result by combining  deterministic and probabilistic lemmas. 
 
Note that in the definitions that follow, we do not care if the edges are $A'$-edges or $B'$-edges.
\begin{itemize}
\item A \emph{simple path of length $t$} ($t \in \mathbb{N}$) consists of edges $e_1,\dots,e_t$ 
such that $|e_i \cap e_j|=1$ if $j=i+1$, and $|e_i \cap e_j|=0$ if $j>i+1$. 
\item A \emph{fairly simple cycle} consists of a simple path $e_1,\dots,e_t$, $t \geq 2$, 
and an edge $e_0$ such that $|e_0 \cap e_1|=1$;  $|e_0 \cap e_i|=0$ for $2 \leq i \leq t-1$; $|e_0 \cap e_t|=s \geq 1$. 
\item A \emph{simple cycle} is a fairly simple cycle with $s=1$.
\item A simple path $P$ in $H$ is called \emph{spoiled} if it is not an induced subhypergraph of $H$, 
i.e. there is an edge $e \in E(H)$ such that $e \not \in E(P)$ and $e \subseteq V(P)$. 
\item A subhypergraph $H_0$ of $H$ is said to have a \emph{handle} if there is an edge $e$ in $H$ such that $|e| > |e \cap V(H_0)| \geq 2$.
\item  A \emph{bad triple} is set of three edges $e_1, e_x, e_y$, 
where $e_1 \cap e_x=\{x\}$, $e_1 \cap e_y=\{y\}$, $x \not=y$, and $|e_x \cap e_y| \geq 2$.
\item A \emph{Pasch configuration} is a set of four edges $e_1, e_2, e_3, e_4$ of size $3$ such that 
$v_{ij}=e_i \cap e_j$ is a distinct vertex for each pair $i<j$.
\item A \emph{faulty simple path of length $t$} ($t \geq 3$) is a simple path $e_1,\dots,e_t$ together with two edges $e_x$ and $e_z$ such that 
$e_1,e_2,e_x$ form a simple cycle with $|e_x \cap e_i|=0$ for $i \geq 3$;
$e_{t-1},e_t,e_z$ form a simple cycle with $|e_z \cap e_i|=0$ for $i \leq t-2$;
 each edge has size $3$; the edges $e_x$ and $e_z$ may or may not be disjoint.
 \COMMENT{OR: and the vertices of each $e_x$ and $e_z$ which are outside the simple path may or may not be the same vertex.}
\item A \emph{bad tight path} is a set of three edges $e_1, e_2, e_3$ each of size $3$ such that 
$|e_1 \cap e_2|=2$, $|e_1 \cap e_3|=1$ and $|e_2 \cap e_3|=2$.
\end{itemize}

\begin{lemma}[Deterministic lemma]\label{detclaim2}
If $H$ is non-empty then it contains at least one of the following structures:
\begin{itemize}
\item[(i)]  A spoiled simple path.
\item[(ii)] A fairly simple cycle with a handle.
\item[(iii)] A bad triple.
\item[(iv)] A simple path of length at least $\log n$ with edges of size $3$.
\item[(v)] A faulty simple path of length at most $\log n$.
\item[(vi)] A bad tight path.
\end{itemize}
\end{lemma}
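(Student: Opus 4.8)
The plan is to argue by contradiction: assume $H$ is non-empty yet avoids all six structures (i)--(vi), and then expose one of them. The whole argument runs in parallel with the proof of Lemma~\ref{detclaim}, only now there is no distinction between $A'$-edges and $B'$-edges, every edge has size $k\ge 3$, and the "allowed" local patterns are governed by the new list of forbidden configurations. First I would record that Claim~\ref{cl:determ} holds verbatim with $A',B'$ in place of $A,B$: for every edge $e$ of $H$ and every $v\in e$ there is an edge $e'$ with $e\cap e'=\{v\}$; in particular every vertex of $H$ lies in at least two edges. Then, as a preliminary reduction analogous to Claims~\ref{cl:Atrees1} and~\ref{cl:Btrees1}, I would extract local overlap constraints from the absence of (iii) and (vi): no edge can be "re-entered" by a short path in a way that produces a bad triple or a bad tight path, so whenever a path $e',e,e''$ of $H$ passes through two distinct vertices of $e$, the outer edges $e',e''$ must be essentially disjoint, and two edges meeting in at least two vertices are already severely constrained.

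The core of the proof is a greedy edge-revealing process, precisely in the spirit of the two-case algorithm ("$H$ contains a cycle" / "$H$ does not") used for Lemma~\ref{detclaim}. Starting from an arbitrary edge we repeatedly append a new edge through an as-yet-unused vertex of the current terminal edge; such a vertex exists because each edge has $k\ge 3$ vertices and meets the previous one in exactly one, and the new edge is supplied by Claim~\ref{cl:determ}. We run the process until either it has lasted $\log n$ steps, in which case the revealed structure contains a simple path of length at least $\log n$ — giving (iv) in the size-$3$ case, or a spoiled simple path (i) when the new edge re-enters the path and larger edges force $e\subseteq V(\text{path})$ — or else the newly revealed edge $e$ attaches to the current sub-hypergraph $J$ in a "bad" way. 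In the latter case one analyses how $e$ meets $V(J)$: if $|e\cap V(J)|\ge 2$ with $e\not\subseteq V(J)$ we are in the "handle" regime and, according to whether $J$ is currently a simple path or already contains a (fairly simple) cycle, we read off a fairly simple cycle with a handle (ii), a spoiled simple path (i), a bad triple (iii), a faulty simple path (v), or a bad tight path (vi); if $e\subseteq V(J)$ then a short sub-path of $J$ together with $e$ is already a spoiled simple path. Throughout I would keep careful track of which vertices are new in which edge, so that every intermediate configuration admits a valid edge order, which is exactly what the accompanying probabilistic lemma (the analogue of Lemma~\ref{proclaim}) needs.

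The main obstacle — and the reason this six-way split is needed rather than the two structures of~\cite{random4} — is the case where two edges overlap in exactly two vertices. This is the configuration the original argument did not fully treat: when the terminal edge of a path is extended and the new edge re-enters the path through a second vertex, the short cycle that forms need not be a clean "fairly simple cycle with a handle"; one can be driven instead into a faulty simple path (short cycles dangling off both ends of a path of size-$3$ edges) or, at the degenerate extreme of three size-$3$ edges with pairwise overlaps of sizes $2,2,1$, into a bad tight path. Disentangling these sub-cases, ruling out that the process stalls without producing any of (i)--(vi), and verifying in each sub-case that the exhibited configuration genuinely matches one of the six definitions (with a valid edge order), is where essentially all the work lies; the remainder is bookkeeping entirely parallel to the proof of Lemma~\ref{detclaim}.
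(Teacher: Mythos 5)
Your high-level plan (contradiction, Claim~\ref{cl:determ}, grow a path, case-analyse how a new edge attaches) points in the right direction, but the proposal has a genuine gap exactly where the work lies. You replace the paper's argument -- which takes a \emph{longest} simple path $P=e_1,\dots,e_t$ and analyses \emph{both} of its ends -- by a one-ended greedy revealing process, and then simply assert that whenever the newly revealed edge attaches ``badly'' one can read off one of (i)--(vi). That assertion fails in the crucial sub-case: when $k=3$, the two edges $e_x,e_y$ supplied by Claim~\ref{cl:determ} at the free vertices of the terminal edge can wrap back onto the path so that $e_1,e_x,e_y,e_2$ form a Pasch configuration, and a Pasch configuration sitting at the end of a path is \emph{not} any of the six listed structures (nor can it be added to the list -- the paper notes its expected count in $G$ is bounded away from $0$, e.g.\ for $x+y=z$). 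The paper escapes this trap in two ways your process cannot: maximality of the longest path applies at the far end as well, producing a second Pasch configuration and hence a faulty simple path (v) when $t\geq 3$ (with length at most $\log n$ thanks to (iv)); and when $t=2$ it invokes Rado-minimality of $H$ beyond Claim~\ref{cl:determ} -- the Pasch configuration alone is properly $2$-colourable, so it cannot be a component of $H$, which forces an extra edge $e_5$ whose intersection pattern then yields a longer path, a handle, or a bad tight path. Your one-ended growth has no maximality at the other end and never uses minimality of $H$ in this way, so the Pasch case is left open.

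A second, smaller flaw: you claim that if the process runs for $\log n$ steps you obtain (iv) ``in the size-$3$ case, or a spoiled simple path when the edges are larger.'' Structure (iv) only concerns edges of size $3$; for $k\geq 4$ a long simple path is not forbidden, and nothing forces a spoiled simple path merely because the path is long, so your termination argument does not work for $k\geq 4$. In the paper no length bound is needed there: the endpoint analysis (using $h_z\geq 2$, the handle/bad-triple exclusions, and the counting inequality $k\geq 2k-3$) either produces (i), (ii) or (iii) outright or forces $k=3$, and only then do (iv)--(vi) and the $\log n$ cap enter.
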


\begin{proof}
Suppose for a contradiction that $H$ is non-empty but does not contain any of the structures defined in (i)--(vi).
Let $P=e_1, \dots, e_t$ be the longest simple path in $H$.
By Claim~\ref{cl:determ}, $t\geq 2$.
Without loss of generality assume $e_1$ is an $A'$-edge. 
Let $x,y$ be two vertices which belong only to $e_1$ in $P$, 
and let $e_x$ and $e_y$ be the two $B'$-edges of $H$ whose existence is guaranteed by Claim~\ref{cl:determ}, 
i.e. $e_z \cap e_1 = \{z\}$ for $z=x,y$. 
By the maximality of $P$, 
we have $h_z:=|V(P) \cap e_z| \geq 2$ for $z=x,y$. 

If $h_z=k$ for some $z$, then $P$ together with $e_z$ is a spoiled simple path, a contradiction. 
Otherwise, let $i_z:=\min\{i \geq 2: e_z \cap e_i \not= \emptyset\}$ for $z=x,y$, 
and assume without loss of generality that $i_y \leq i_x$. 
As we are assuming that (ii) does not hold, $e_1, \dots, e_{i_x}, e_x$ must not form a fairly simple cycle for which $e_y$ is a handle. Thus, this implies $e_y \subseteq e_1\cup \dots \cup e_{i_x}\cup e_x$. In particular, this means $e_x$ must contain all those vertices
 in $e_y$ which do not lie on $P$. In fact, this implies $e_y\cap e_x$ consists of precisely one vertex $v_{xy}$ (and $v_{xy}$ lies outside of $P$); indeed, otherwise $e_1$, $e_x$ and $e_y$ form a bad triple, a contradiction. Now consider $e_1,\dots, e_{i_y}, e_{y}$. This is a fairly
 simple cycle that $e_x$ intersects in at least two vertices (i.e. $x$ and $v_{xy}$).
 Thus, we obtain a fairly simple cycle with a handle unless all the vertices in $e_x$ lie in $e_1,\dots, e_{i_y}, e_{y}$. In particular, $e_x \subseteq (e_1 \cup e_{i_y} \cup e_y)$ as $i_y \leq i_x$. This in turn implies $e_{i_y}=e_{i_x}$. Indeed, otherwise $e_x$ must contain one vertex from $e_1$ and $k-1\geq 2$ vertices from $e_y$, a contradiction as we already observed that $e_x$ only intersects $e_y$ in one vertex.
 
 In summary, we have that
 $i_x=i_y$ and $e_x$ and $e_y$
 intersect in a single vertex $v_{xy}$ (and $v_{xy}$ lies outside of $P$).
As mentioned in the last paragraph, we must   have $e_x \subseteq (e_1 \cup e_{i_x} \cup e_y)$. 
Similarly, 
we have that $e_1,\dots,e_{i_x},e_x$ form a fairly simple cycle for which $e_y$ is a handle (a contradiction),
unless if we  have $e_y \subseteq (e_1 \cup e_{i_x} \cup e_x)$.

As $|e_x \cap e_y|=1$,
this implies $|(e_x \cap e_{i_x}) \setminus (e_1 \cup e_y)|=k-2$ and
$|(e_y \cap e_{i_x}) \setminus (e_1 \cup e_x)|=k-2$.
Moreover,
$|e_{i_x} \setminus (e_x \cup e_y)| \geq 1$; indeed, otherwise
$e_x$, $e_y$ and $e_{i_x}$ form a spoiled simple path. Recalling that
$|e_x \cap e_y\cap V(P)|=0$, altogether this gives that
 $k=|e_{i_x}| \geq 2k-3$. Thus we must have $k=3$.

If $i_x \geq 3$ then $e_1, e_x, e_y$ form a (fairly) simple cycle for which $e_{i_x}$ is a handle, a contradiction. 
Thus we have that $i_x=2$, and so $e_1, e_x, e_y, e_{i_x}$ form a Pasch configuration.

Now repeat the maximal path process which we did for $e_1$ to find $e_x$ and $e_y$, except from the other end of the path.
That is, there must exist edges $e_z$ and $e_w$ such that $e_z \cap e_t = \{z\}$, $e_w \cap e_t = \{w\}$, where $z,w$ are vertices in
$e_t$ that are not in $e_{t-1}$. 
By repeating the previous case analysis, we arrive at the conclusion that $e_{t-1}, e_t, e_z, e_w$ must also form a Pasch configuration where $e_z \cap e_w$ is a vertex $v_{zw}$ outside of $P$.

If $t \geq 3$, then $e_1,\dots,e_t, e_x, e_z$ together form a faulty simple path (i.e. one of (iv) and (v) holds, a contradiction).
Hence we must have $t=2$.

If the union of these two Pasch configurations contains $7$ vertices (i.e. $v_{xy} \not = v_{zw}$), then $e_1,e_2,e_x$ form a (fairly) simple cycle for which $e_z$ is a handle. 
So we now suppose that the two Pasch configurations cover the same $6$ vertices.
If we do not have $\{e_x, e_y\}=\{e_z, e_w\}$ then $e_x, e_z, e_y$ form a bad tight path.
Hence we do have equality and the two Pasch configurations we found are identical.
(Note that $e_x, e_y$ are $B'$-edges, whereas $e_z,e_w$ may be $A'$-edges; that is we could have edges which are both $A'$-edges and $B'$-edges.)

Relabel the edges and vertices as in the definition of a Pasch configuration.
We observe that $H$ cannot be just these four edges, even if all four edges are both $A'$-edges and $B'$-edges:
such a hypergraph is not Rado, e.g. colour $v_{12}, v_{13}, v_{34}$ red, and the remaining vertices blue.
Also, by definition of Rado minimal, this cannot be a component of $H$. That is,
there is an edge $e_5$ in $H$, where $e_5 \not= e_i$, $i \in [4]$, 
and $e_5$ contains $s$ vertices from inside the Pasch configuration, where $s \geq 1$.
If $s=1$ then w.l.o.g. $e_5$ contains $v_{1,2}$; then $e_5$, $e_1$, $e_3$ is a simple path of length $3$, 
a contradiction to the longest path in $H$ of length $2$ found earlier.
If $s=2$ then whichever $2$ vertices of the Pasch configuration $e_5$ contains, 
taking any of the simple cycles of the Pasch configuration together with $e_5$ 
gives a (fairly) simple cycle with handle.
If $s=3$, first suppose $V(e_5)=\{v_{1,2}, v_{1,3}, v_{2,3}\}$. Then $e_1,e_5,e_2$ is a bad tight path. 
If $V(e_5)=\{v_{1,2}, v_{1,3}, v_{2,4}\}$, then again $e_1, e_5, e_2$ is a bad tight path.
For all other $3$-sets of vertices $e_5$ could contain, a symmetrical argument shows that we find a bad tight path.
Since all three values of $s$ give a contradiction, this concludes the proof.
\end{proof}
The reader might wonder why we did not add the Pasch configuration to list of configurations in the statement of Lemma~\ref{detclaim2}, and then curtail our proof at the point that we conclude $H$ contains this structure: it turns out that
(e.g. if $A'$ and $B'$ correspond to $x+y=z$),  the expected number of Pasch configurations in $G$ is bounded away from $0$.
On the other hand, we now show that w.h.p. none of the structures (i)--(vi) occur in $G$.

\begin{lemma}[Probabilistic lemma]\label{proclaim2}
W.h.p. $G$ (and therefore $H$) does not contain any of the structures described by (i)--(vi) in 
Lemma~\ref{detclaim2}.
\end{lemma}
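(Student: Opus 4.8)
The plan is to mimic the structure of the proof of Lemma~\ref{proclaim}: for each of the six structures in Lemma~\ref{detclaim2} we bound the expected number of copies in $G$ and show it is $o(1)$, then finish by Markov's inequality (and a union bound over the six structure types). The key tool is the analogue of Claim~\ref{cl:copiesofS}: since $A'$ and $B'$ are (systems of linear equations whose underlying matrices have core) of dimension $\ell \times k$, once one has chosen all but $k-\ell$ of the $k$ vertices of an edge, there are at most $O(n^{k-\ell-1})$ ways to extend to a full edge (each further edge beyond the first "new" one in a valid edge order contributes a factor $n^{k-\ell-1} p^{(\text{new vertices})}$). Concretely, if $S$ is a subhypergraph with a valid edge order then $\mathbb{E}_G(S) = O\!\left(n^{|V(S)| - (k-\ell-1)|E(S)| - \text{(something)}}\right)$; plugging in $p = cn^{-(k-\ell-1)/(k-1)}$ and using $c \ll 1/k$ one gets, roughly, $\mathbb{E}_G(S) \le c^{\,\geq 0}\, n^{\frac{(k-1)|V(S)| - (k-1)(k-\ell-1)|E(S)| - (k-\ell-1)|V(S)|}{k-1}}$, i.e. the exponent is governed by $\ell|V(S)| - (k-1)(k-\ell-1)|E(S)|$ up to constants. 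So for each structure one just needs the combinatorial count of vertices and edges (as functions of the relevant length parameter) and must check the exponent is negative, with the $\log n$ factors from path/cycle lengths absorbed because the leading term grows in the number of edges.

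The bookkeeping then proceeds structure by structure, exactly parallel to cases (i)--(ix) before. For a spoiled simple path one takes the path of a given length $t$ plus one extra edge $e$ with $e \subseteq V(P)$; crucially $e$ contributes no new vertices, so it contributes a factor $O(n^{-(k-\ell-1)})$ (strictly negative since $k \ge \ell+2$), beating the $\operatorname{polylog}$ from summing over $t \le n$ and over the $\binom{|V(P)|}{k}$ placements of $e$ — one must sum $t$ from $2$ to $n$ (or to $\infty$) and check convergence; here, as in the footnote of Case (i), the choice $c \ll 1/k$ forces each path edge to contribute a factor $c$, which dominates. For a fairly simple cycle with a handle: take the cycle (whose edge and vertex counts are essentially equal, up to the overlap parameter $s$) plus a handle edge contributing $\le k-1$ vertices hence a strictly negative power of $n$. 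For a bad triple, a Pasch-like three-edge count with $|e_x\cap e_y|\ge 2$ gives two "wasted" vertices. For a long simple path of edges of size $3$ (case (iv)), the relevant computation is $|V|=2t+1$, $|E|=t$ against $p^{2t+1}$: with $k=3$, $\ell=1$ we have $k-\ell-1=1$, so each edge contributes $n^{1}p^{2} = cn^{1 - 2\cdot 2/(k-1)} \cdot (\dots)$; one checks the per-edge factor is $<1$ for $c$ small, giving $c^{\Theta(\log n)} = o(1)$. The faulty simple path (case (v)) and bad tight path (case (vi)) are finite-size or $\log n$-bounded structures with edges of size $3$, and one counts vertices/edges directly and verifies a negative exponent.

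The main obstacle I expect is not any single calculation but getting the generality of the expected-value bound right: the edges here may be of two different sizes $k_A, k_B$ (if $A,B$ are length-$k$ linear equations then $k_A=k_B=k$, but the theorem is stated for systems whose \emph{cores} have dimension $\ell\times k$, so the actual edges of $K$ may be larger than $k$), yet the decisive quantity is always the core dimension. One must make precise the statement "each edge of $K$ lies over a $k$-distinct solution to $C(A)x=a'$ or $C(B)x=b'$, which is strictly balanced of dimension $\ell\times k$", and use that strict balancedness (inequality~(\ref{eq:strictlyb})) to control the number of ways to complete a partially-specified edge, uniformly over which variables have already been pinned down — this is the analogue of~(\ref{eq:edgechoose}). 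Once that uniform counting lemma is in place, every one of (i)--(vi) reduces to checking a linear inequality in $|V(S)|, |E(S)|$ of the form $\ell|V(S)| < (k-1)(k-\ell-1)|E(S)| + (\text{const})$, which in each case holds because the "extra" edge(s) beyond a tree-like skeleton always reuse at least two old vertices and hence contribute strictly negative exponents, while $c \ll 1/k$ tames the growing ($\log n$) parameters. The second subtlety to handle carefully is that some structures (e.g. faulty simple path) come in two sub-cases ($e_x, e_z$ disjoint or sharing a vertex), and the fairly simple cycle has the overlap parameter $s \ge 1$; one must sum over these and confirm the worst case ($s$ largest, edges coinciding) still gives $o(1)$.
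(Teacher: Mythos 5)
Your overall strategy (first moment plus Markov, with the number of completions of a partially specified edge controlled via the core and its strict balancedness) matches the paper's treatment of structures (iii)--(vi), and your reduction of (iv)--(vi) to the $k=3$, $\ell=1$ computations of Section~\ref{sec:mainproof} is exactly what the paper does. However, there are two genuine gaps. The first concerns structures (i) and (ii): the paper does not reprove these at all, but imports the corresponding argument from R\"odl and Ruci\'nski~\cite{random4}, and your replacement argument fails as stated. An edge $e\subseteq V(P)$ with no new vertices does not ``contribute a factor $O(n^{-(k-\ell-1)})$''; it contributes no factor of $p$ whatsoever, only a combinatorial placement count. With your path-first bookkeeping the expected number of spoiled simple paths whose path has length $t$ is of order $n^{\ell/(k-1)}\cdot c^{\Theta(t)}\cdot t^{O(1)}$, which is not $o(1)$ for bounded $t$, and summing over $t$ does not rescue this. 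To get $o(1)$ one has to reveal the edges in a smarter order (e.g.\ the spoiler early, so that subsequent path edges have at least two old vertices) and then the gain per such edge depends on $\rank(C(A)_{\overline{W}})$ for the relevant column set $W$; this is precisely the nontrivial argument of~\cite{random4} (and the analogous issue arises for the fairly simple cycle with a handle). You must either reproduce that argument or cite it, as the paper does.

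The second gap is that your proposed uniform counting lemma --- reducing each case to a linear inequality in $|V(S)|$ and $|E(S)|$ alone --- is too coarse once $\ell\geq 2$. The number of completions of an edge whose old vertices occupy the columns $W$ is governed by $n^{k-|W|-\rank(M_{\overline{W}})}$ (Corollary~4.6 of~\cite{hst}, the paper's~(\ref{eq:q1})), and $\rank(M_{\overline{W}})$ genuinely varies with $W$, so the exponent in the expectation depends on the intersection pattern and not merely on $(|V(S)|,|E(S)|)$. This is visible in the paper's bad-triple computation~(\ref{reee}): the edge with exactly two old vertices is completed in at most $O(n^{k-\ell-2})$ ways because $\rank(M_{\overline{W}})=\ell$ when $|W|=2$, which exactly cancels the $n^{\ell/(k-1)}$ coming from the first (entirely new) edge, and only then does strict balancedness, in the rearranged form $\ell(k-|W|)-(k-1)\rank(M_{\overline{W}})<0$ of~(\ref{eq:q3}), supply the strictly negative exponent for the remaining edge. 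You do identify strict balancedness as the key tool, but the per-edge factor $n^{k-\ell-1}p^{(\text{new vertices})}$ you posit is correct only for edges with exactly one old vertex; the bookkeeping must retain the ranks edge by edge, as in~(\ref{reee}). With that correction, and with (i)--(ii) handled via~\cite{random4}, your plan for (iii)--(vi) does go through.
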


\begin{proof}
{\it Cases (i) and (ii)}
The argument in~\cite{random4} shows that w.h.p. $G$ (and therefore $H$) does not contain a spoiled simple path or a fairly simple cycle with a handle.

\smallskip

{\it Case (iii)}
Let $K$ denote the $k$-uniform hypergraph with vertex set $[n]$ 
where edges correspond to the $k$-distinct solutions to $A'$ and
 $B'$.
As in the proof of Claim~\ref{cl:copiesofS} we wish to bound the number of copies of a particular subgraph $S$ within $K$.
Suppose, similarly to the proof of Claim~\ref{cl:copiesofS}, that we are considering $Z$, 
the number of $A'$-edges and $B'$-edges of size $k$ in $K$ that contain a fixed set $Q$ of $q<k$ vertices in $K$.
In this case, by Corollary 4.6 in~\cite{hst}, we have
\begin{align}\label{eq:q1}
Z \leq \sum_{\stackrel{W \subseteq [k]}{|W|=q}} q! \cdot n^{k-q-\rank(C(A)_{\overline{W}})} + \sum_{\stackrel{W \subseteq [k]}{|W|=q}} q! \cdot n^{k-q-\rank(C(B)_{\overline{W}})}.
\end{align}

Note that if $q=|W|=2$, then by Proposition 4.3 in~\cite{hst} we have $\rank(M_{\overline{W}})=\ell$ for $M=C(A)$ and $M=C(B)$.
Thus we have 
\begin{align}\label{eq:q2}
Z \leq 2 k! \cdot n^{k-\ell-2}.
\end{align}

Now let $R_s$ be a bad triple $e_1, e_x, e_y$ with $|e_x \cap e_y|=s$ and 
let $X$ denote the total number of copies of $R_s$ in $G$ with $2 \leq s \leq k-1$.
Consider the edge order $e_x, e_y, e_1$; $e_y$ has $s$ old vertices, and $e_1$ has $2$ old vertices, and thus
we obtain via~(\ref{eq:q1}) and~(\ref{eq:q2}) that
\begin{align}\label{reee}
\mathbb{E}(X) \leq \sum_{s=2}^{k-1} \mathbb{E}_G(R_s) 
\leq 4 k!^2 n^{2k-2\ell-2} p^{2k-2} 
\left(  \sum_{\stackrel{W \subseteq [k]}{2 \leq |W| \leq k-1}} \sum_{M \in \{C(A),C(B)\}} |W|! n^{k-|W|-\rank(M_{\overline{W}})} p^{k-|W|} \right).
\end{align}

Since $C(A)$ and $C(B)$ are strictly balanced,
we may use the inequality given by~(\ref{eq:strictlyb}).
If $|W| \geq 2$, then (by e.g. Proposition 4.3(ii) in~\cite{hst}) the denominator of the
left hand side of~(\ref{eq:strictlyb}) is positive. Therefore, 
this inequality rearranges to give
\begin{align}\label{eq:q3}
\ell (k-|W|)-(k-1) \rank(M_{\overline{W}})<0,
\end{align}
where $M=C(A)$ or $M=C(B)$.
By recalling $p=cn^{-\frac{k-\ell-1}{k-1}}$, it follows that 
\begin{align}
\mathbb{E}(X) \stackrel{(\ref{reee})}{\leq} 4 k!^2 \left( \sum_{\stackrel{W \subseteq [k]}{2 \leq |W| \leq k-1}}  \sum_{M \in \{C(A),C(B)\}}  |W|! \cdot n^{k-|W|-\rank(M_{\overline{W}})} n^{-\frac{(k-\ell-1)(k-|W|)}{k-1}} \right) \stackrel{(\ref{eq:q3})}{=}o(1),
\end{align}
so by Markov's inequality we have that w.h.p.  $G$ does not contain any bad triples. 

\smallskip

For the final three cases we have $k=3$, and so we have $\ell=1$.
Then as in the proof of Theorem~\ref{mainthm}, equations~(\ref{eq:exp}) and~(\ref{eq:exp2}) hold, 
and so we may use these with $k_A=k_B=k=3$ for the remaining cases.
Note that here we have $p=cn^{-1/2}$.

{\it Case (iv)}
Fix $s:= \lceil \log n \rceil$. Let $L_s$ denote a simple path of length $s$. Recall that there exists a valid edge order for $L_s$.
Further $|V(L_s)|=2s+1$ and $|E(L_s)|=s$. We obtain via~(\ref{eq:exp}) that
\begin{align}
\mathbb{E}_G(L_s) \leq c^{\log n} n^{\frac{1}{2}} =o(1),
\end{align}
where the last equality follows since $c$ is sufficiently small compared to $1/k$. 
Thus, Markov's inequality implies that w.h.p. $G$ does not contain a simple path of length at least $\log n$.

\smallskip

{\it Case (v)}
Let $F_s$ be a faulty simple path of length $s$ and let $X$ denote the total number of copies of $F_s$ in $G$ with $3 \leq s \leq \log n$. 
Clearly $e_1,\dots,e_{s-1}, e_x,e_z, e_s$ is a valid edge order for $F_s$.
We have a choice of whether $e_x$ and $e_z$ intersect outside of the simple path or not.
If they do we obtain $|V(F_s)|=2s+2$ and if not we have $|V(F_s)|=2s+3$. In both cases we have $|E(F_s)|=s+2$, so we obtain via~(\ref{eq:exp2}) that
\begin{align}
\mathbb{E}(X)= \sum_{s=3}^{\log n} \mathbb{E}_G(F_s) \leq \sum_{s=3}^{\log n} (n^{-1} + n^{-1/2} ) \leq 2 \log n \cdot n^{-\frac{1}{2}} =o(1).
\end{align}
Thus, Markov's inequality implies that w.h.p. $G$ does not contain any faulty simple paths of length at most $\log n$.

\smallskip

{\it Case (vi)}
Let $T=e_1,e_2,e_3$ be a bad tight path. Clearly this is a valid edge order; there are $3$ edges and $5$ vertices,
and so we obtain via~(\ref{eq:exp2}) that 
$\mathbb{E}_G(T) \leq n^{-1/2} =o(1).$
Thus, Markov's inequality implies that w.h.p. $G$ does not contain $T$.
\end{proof}

\section{Concluding remarks}\label{conc}
 
\COMMENT{RH: Previously had written:
We also have a matching $0$-statement in the case where the matrices $A_1$ and $A_2$ both have cores of rank $\ell=1$ 
via our main theorem of this paper.
Not sure where this comment fits in now!}

It still remains to prove the $0$-statement of Conjecture~\ref{conj} in full generality.
One can extend the machinery we use to this general setting;
in particular, the deterministic lemma (Lemma~\ref{detclaim}) holds. However,
this does not fully resolve the $0$-statement of Conjecture~\ref{conj}
as we do not obtain a matching probabilistic lemma.
Indeed, the bound resulting from equation (\ref{eq:q1}) 
is not strong enough to conclude that (for $p$ close to the threshold given in Conjecture~\ref{conj}), 
in expectation $G$ has $o(1)$ copies of the subgraphs we wish to forbid.


As mentioned in the introduction,
it would  be interesting to deduce a matching $1$-statement for linear equations covered by Theorem~\ref{mainthm} but not by Conjecture~\ref{conj}. We believe
such a result should follow from the approach in~\cite{AP} provided one could deduce a supersaturation result of the following form: 
\begin{problem}
Let $A_1,\dots,A_r$ be systems of linear equations, 
with underlying matrices $A'_1,\dots,A'_r$ of full rank where $A'_i$ has dimension $\ell_i \times k_i$,
such that each of the $A'_i$ are irredundant,
and further $\mathbb{N}$ is $(A_1,\dots,A_r)$-Rado.
Does there exist constants $c,n_0$ such that for all $n>n_0$,
however one $r$-colours $[n]$ there exists an $i \in [r]$ such that
there are at least $c n^{k_i-\ell_i}$ solutions to $A_i$ in the $i$th colour?
\end{problem} 
Note that this would be a generalisation of the supersaturation result of Frankl, Graham and R\"odl~\cite{fgr} 
which deals with the case  where $A:=A_1=\dots=A_r$ and $A$ is a homogeneous  partition regular system of linear equations.

What about the case where one (or more) of the linear equations have only two variables?
For example, as seen in the introduction, if $A$ is $x=2y$ and $B$ is $x=4y$, then $[n]$ (for $n \geq 16$) is $(A,B)$-Rado, and so one can ask for the threshold for
$[n]_p$ being $(A,B)$-Rado. 

Finally, as pointed out by the referees, we could search for monochromatic solutions which rather than being $k$-distinct, are \emph{non-trivial}, as initially defined by Ruzsa for linear equations in~\cite{Ruzsa}, and extended to systems of linear equations in~\cite{RSZ}.
(For example, for Sidon sets where $x+y=z+w$, a solution with $z=w$ and $z\not=x$ is a non-trivial, non-$k$-distinct solution.)
It is not so natural to consider non-trivial solutions in the random setting for the symmetric case, 
and this is illustrated by the threshold given by Theorems~\ref{radores0} and~\ref{r3}.
Indeed, given an $\ell \times k$ matrix $A$, if $A$ is strictly balanced, then $n^{-1/m(A)} = n^{-(k-\ell-1)/(k-1)}$. Then at this threshold, in expectation  there are $O(n^{k-\ell-1} p^{k-1})=O(1)$ non-$k$-distinct solutions to $Ax=0$ in $[n]_p$. Therefore for $p$ significantly below this threshold,
w.h.p. $[n]_p$ contains no non-$k$-distinct solutions to  $Ax=0$.\COMMENT{AT: that's right, right? Just Markov}

In the asymmetric case, the same calculation does not hold: assuming $A$ and $B$ are linear equations with $k_B>k_A$ and $p=n^{-1/m(A,B)}$, we obtain that, in expectation, there are $\Theta(n^{k_B-2} p^{k_B-1})=\Theta(n^{k_B/k_A -1})$ non-$k_B$-distinct solutions to $B$ in $[n]_p$. Thus, it may be of interest to consider the non-trivial monochromatic solution problem in this
asymmetric setting.


\section*{Acknowledgements}
The authors are grateful to the Midlands Arts Centre for providing a nice working environment for undertaking this research, and
 to the two referees for their helpful and careful reviews.

\end{document}